\newcommand{\cO}{\mathcal{O}}
\newcommand{\cS}{\mathcal{S}}
\newcommand{\FK}{\mathcal{FK}}
\newcommand{\Sn}{{\mathbb S}}
\newcommand{\xtop}{x_{top}}
\newcommand{\xij}[1]{x_{(#1)}}
\newcommand{\yij}[1]{y_{(#1)}}
\newcommand{\DSn}{\D(\Sn_3)}
\newcommand{\oV}{\overline{V}}
\newcommand{\ot}{{\otimes}}
\newcommand{\ku}{\Bbbk}
\newcommand\fA{\mathsf{A}}
\newcommand\fB{\mathsf{B}}
\newcommand\fC{\mathsf{C}}
\newcommand\fE{\mathsf{E}}
\newcommand\fG{\mathsf{G}}
\newcommand\fH{\mathsf{H}}
\newcommand\fInd{\mathsf{Ind}}
\newcommand\fL{\mathsf{L}}
\newcommand\fM{\mathsf{M}}
\newcommand\fN{\mathsf{N}}
\newcommand\fP{\mathsf{P}}
\newcommand\fR{\mathsf{R}}
\newcommand\fS{\mathsf{S}}
\newcommand\fT{\mathsf{T}}
\newcommand\fW{\mathsf{W}}
\newcommand\fX{\mathsf{X}}
\newcommand\fe{\mathsf{e}}
\newcommand\fh{\mathsf{h}}
\newcommand\fm{\mathsf{m}}
\newcommand\fn{\mathsf{n}}
\newcommand\fo{\mathsf{o}}
\newcommand\fp{\mathsf{p}}
\newcommand\fs{\mathsf{s}}
\newcommand\ft{\mathsf{t}}
\newcommand\fu{\mathsf{u}}
\newcommand{\Z}{{\mathbb Z}}
\newcommand{\D}{\mathcal{D}}
\newcommand{\BV}{{\mathfrak B}}
\newcommand{\Ext}{\operatorname{Ext}}
\newcommand{\ch}{\operatorname{ch}}
\newcommand{\chgr}{\operatorname{ch}^{\bullet}}
\newcommand{\Lsp}{\Lambda_{sp}}
\newcommand\sgn{\operatorname{sgn}}
\newcommand\soco{\operatorname{soc}}
\newcommand\im{\operatorname{im}}
\newcommand\mm[1]{\boldsymbol{|#1\rangle}}
\newcommand\e{\varepsilon}
\theoremstyle{plain}
\newtheorem{lema}{Lemma}[section]
\newtheorem{theorem}[lema]{Theorem}
\newtheorem{prop}[lema]{Proposition}
\newtheorem{question-app}{Question}
\theoremstyle{definition}
\newtheorem{definition}[lema]{Definition}
\theoremstyle{remark}
\newtheorem{obs}[lema]{Remark}
\newtheorem{rmk}[lema]{Remarks}
\begin{document}

\title[]{On the representation theory of the Drinfeld double of the Fomin-Kirillov algebra $\FK_3$}

\author[B. Pogorelsky and C. Vay]{Barbara Pogorelsky and Cristian Vay}

\address{Instituto de Matem\'atica, Universidade Federal do Rio Grande do Sul, Av. Bento Goncalves 9500, Porto Alegre, RS, 91509-900, Brazil} \email{barbara.pogorelsky@ufrgs.br}

\address{Universidad Nacional de C\'ordoba, Facultad de Matem\'atica, Astronom\'ia, F\'isica y Computaci\'on, CIEM--CONICET, C\'ordoba, Rep\'ublica Argentina} \email{vay@famaf.unc.edu.ar}

\thanks{\noindent 2000 \emph{Mathematics Subject Classification.}16W30.
\newline C. V. was partially supported by CONICET, Secyt (UNC), FONCyT PICT 2016-3957, Programa de Cooperaci\'on MINCyT-FWO, MathAmSud project GR2HOPF and ESCALA Docente AUGM}

\begin{abstract}
Let $\D$ be the Drinfeld double of $\FK_3\#\ku\Sn_3$. The simple $\D$-modules were described in \cite{PV2}. In the present work, we describe the indecomposable summands of the tensor 
products between them. We classify the extensions of the simple modules and show that $\D$ is of wild representation type. We also investigate the projective modules and 
their tensor products.
\end{abstract}

\maketitle

\section{Introduction}

An important property  of the category of modules over a Hopf algebra is that it is a tensor category. Several works address the study of the tensor structure for various families of Hopf 
algebras: the small quantum group $u_q(\mathfrak{sl_2})$ \cite{MR2774620,GaiSemTip06,MR1284788}, the (generalized) Taft algebras \cite{MR1243707,MR1987337,MR3148512,MR3077243} and their Drinfeld 
doubles \cite{MR3655701,MR3260906,MR2441478,MR2184820}, the Drinfeld doubles of finite groups \cite{MR1367852}, the non-semisimple Hopf algebras of low dimension \cite{MR1969453} and 
the pointed Hopf algebras over $\ku\Sn_3$ \cite{MR2681259} (these are liftings of the Fomin-Kirillov algebra $\FK_3$). 

In particular, the small quantum group $u_q(\mathfrak{sl_2})$ is a quotient of the Drinfeld double of a Taft algebra by central group-like elements. Thus, their representation theory have in common significant features: (1) the simple modules are parametrized by the simple modules over the corresponding corradical and (2) the tensor product of two simple modules decomposes into the direct sum of simple and projective modules.

These features are generalization of well-known results in Lie theory. Indeed, the simple modules over a semisimple Lie algebra are parametrized by the weights of the Cartan subalgebra, 
while the tensor products of simple modules are described by the Clebsch-Gordon formula. Moreover, (1) holds for Drinfeld doubles of bosonizations of finite-dimensional Nichols algebras over finite-dimensional Hopf algebras, see for instance \cite{arXiv:1802.00316,arXiv:1705.08024,MR2279242,PV2,vay-mca17}. Notice 
that a Taft algebra can be presented as a bosonization of the quantum line $\ku\langle x\mid x^n=0\rangle$ over the cyclic group of order $n$, the first example of a finite-dimensional 
Nichols algebra. 

A valuable consequence of (2) is, roughly speaking, that the simple modules generate a fusion subcategory in a quotient category. The motivating question for our work was: will (2) also hold for other Drinfeld doubles? 

In the present work, we address this question for the Drinfeld double $\D$ of $\FK_3\#\ku\Sn_3$, {\it i.e.} the bosonization of the Fomin-Kirillov algebra $\FK_3$ over the symmetric group $\Sn_3$. We point out that $\FK_3$
is the first example of a finite-dimensional Nichols algebra over a non-abelian group \cite{MR1800714,MR1667680}. Next, we summarize our main results.

\

The simple $\D$-modules are parametrized by the simple modules over the Drinfeld double $\DSn$ of $\ku\Sn_3$ which play the role of {\it weights} in this setting. Let us denote by
$$
\Lambda=\left\{\varepsilon=(e,+),\, (e,-),\, (e,\rho),\, (\sigma,+),\, (\sigma,-),\, (\tau,0),\, (\tau,1),\, (\tau,2)\right\}.
$$
the set of weights. Table \ref{tab:weight} condenses basic information about them and explains the notation, see also \cite[\S 5.2]{PV2}. We recall that the simple $\DSn$-modules are classified by the conjugacy classes of $\Sn_3$ and the irreducible representations of the respective centralizers (this holds for any finite group $G$ not only for $\Sn_3$, see for instance \cite{MR1714540}). 

\begin{table}[h]
\begin{center}
\begin{tabular}{r|c|c|c|c|c|c|c|c|}
weight & $\varepsilon$ & $(e,-)$ & $(e,\rho)$ & $(\sigma,+)$ & $(\sigma,-)$ & \multicolumn{3}{c|}{$(\tau,i)$, $i=0,1,2$}

\\ \hline

dimension & 1 & 1 & 2 & 3 & 3 & \multicolumn{3}{c|}{2}

\\ \hline

conjugacy & \multicolumn{3}{c|}{$e\in\Sn_3$,} & \multicolumn{2}{c|}{$\sigma\in\Sn_3$,} & \multicolumn{3}{c|}{$\tau\in\Sn_3$,}

\\

class of & \multicolumn{3}{c|}{the identity element} & \multicolumn{2}{c|}{a transposition} & \multicolumn{3}{c|}{a $3$-cycle}

\\ \hline

centralizer & \multicolumn{3}{c|}{$\Sn_3$} & \multicolumn{2}{c|}{cyclic group $C_2$} & \multicolumn{3}{c|}{cyclic group $C_3$}

\\ \hline

simple  & trivial & sign & 2-dim & trivial & sign & \multicolumn{3}{c|}{$i$-th power of a}

\\

representation  & && & & & \multicolumn{3}{c|}{$3$-root of unity}

\\ \hline

\end{tabular}
\end{center}
\caption{Weights}
\label{tab:weight}
\end{table}

Let $\{\fL(\lambda)\}_{\lambda\in\Lambda}$ be the family of (non-isomorphic) simple $\D$-modules. They are characterized as follows, see {\it e.g.} \cite{PV2}. We have a triangular decomposition $\D\simeq\BV(V)\ot\D(\Sn_3)\ot\BV(\oV)$ where $\BV(V)$ and $\BV(\oV)$ are Nichols algebras isomorphic to $\FK_3$. Thus, $\fL(\lambda)$ is the unique simple $\D$-module of highest-weight $\lambda\in\Lambda$, {\it i.e.} it has a $\DSn$-submodule isomorphic to $\lambda$ such that $\BV(\oV)\cdot\lambda=0$. We give more details in \S\ref{sec:Preliminaries} and in the appendix.

\begin{figure}[h]
\begin{center}
\includegraphics{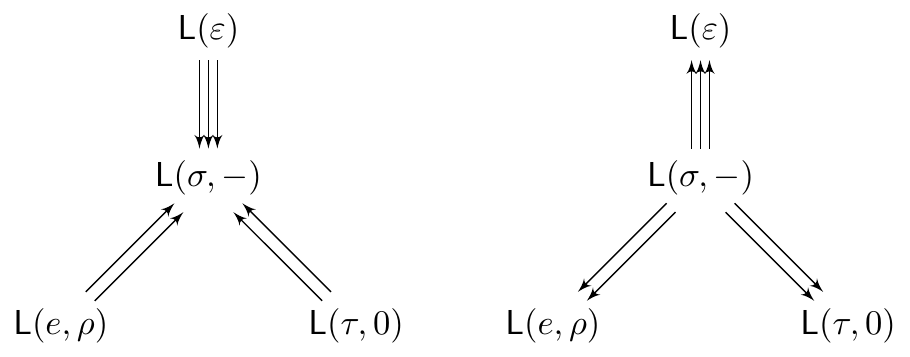}
\end{center}
\caption{Separated quiver of $\D$. The number of arrows indicates the dimension of the respective space of extensions.}
\label{fig:separated quiver}
\end{figure}

By \cite[Theorem 6]{PV2} and \cite[Corollary 17]{vay-proj}, $\fL(\lambda)$ is projective if and only if 
$$
\lambda\in\Lsp:=\{(e,-),(\sigma,+),(\tau,1),(\tau,2)\}.
$$
The remainder simple modules generate a single block of the category of $\D$-modules because they are composition factors of an indecomposable module, the Verma module of $(\sigma,-)$ \cite[Theorem 7]{PV2}. In Section \ref{sec:ext simple modules}, we compute the extensions between these simple modules and show that $\D$ is of wild representation type. We draw the separated quiver of $\D$ in Figure \ref{fig:separated quiver}.


The major effort of our work is in describing the indecomposable summands of the tensor products of simple modules.

\begin{theorem}
Let $\D$ be the Drinfeld double of $\FK_3\#\ku\Sn_3$. Given $\lambda,\mu\in\Lambda$, the indecomposable summands of the tensor product $\fL(\lambda)\ot\fL(\mu)$ are described in 
Propositions \ref{prop:1}, \ref{prop: sigma menos por sigma menos}, \ref{prop:fB}, \ref{prop:fC}, \ref{prop:remainder}, \ref{prop:simple projective} and \ref{prop:simple by projective}. 
\end{theorem}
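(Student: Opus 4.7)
The plan is to verify that the seven cited propositions exhaust all $64$ ordered pairs $(\lambda,\mu)\in\Lambda\times\Lambda$. I would stratify the case analysis by projectivity, by the presence of a one-dimensional factor, and by the block structure.

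First, since projective modules form a two-sided tensor ideal in $\D$-$\mathsf{mod}$ (valid for any finite-dimensional Hopf algebra), whenever $\lambda\in\Lsp$ or $\mu\in\Lsp$ the tensor product $\fL(\lambda)\ot\fL(\mu)$ is projective, hence a direct sum of the covers $\fP(\nu)$. The multiplicities are detected by rigidity:
\[
[\fL(\lambda)\ot\fL(\mu):\fP(\nu)]=\dim\Hom_{\D}\bigl(\fL(\lambda)\ot\fL(\mu),\fL(\nu)\bigr)=\dim\Hom_{\D}\bigl(\fL(\lambda),\fL(\nu)\ot\fL(\mu)^{*}\bigr),
\]
and a dimension count closes this stratum; this is what Propositions \ref{prop:simple projective} and \ref{prop:simple by projective} record.

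For the remaining pairs with $\lambda,\mu\notin\Lsp$, I would exploit the triangular decomposition $\D\simeq\BV(V)\ot\DSn\ot\BV(\oV)$ together with the highest-weight description. Each indecomposable summand of $\fL(\lambda)\ot\fL(\mu)$ is generated over $\BV(V)$ by its subspace of $\BV(\oV)$-invariants, which is a $\DSn$-submodule. Decomposing $\bigl(\fL(\lambda)\ot\fL(\mu)\bigr)^{\BV(\oV)}$ as a $\DSn$-module, using the explicit realization of the $\fL(\nu)$ inside their Verma covers recalled in \S\ref{sec:Preliminaries}, pins down the multiset of highest weights and hence the heads of the summands. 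The one-dimensional $\fL(e,-)$ acts as a tensor autoequivalence permuting the weights, so tensoring with it reduces several cases to one another; this explains why the $(e,-)$ cases are treated separately in Propositions \ref{prop:1} and \ref{prop:fB}.

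The principal obstacle is identifying the non-simple, non-projective indecomposable summands: once the highest weights are known, one must determine the precise isomorphism class (simple, projective, or an intermediate module with prescribed Loewy structure) living in the unique non-semisimple block whose Ext-quiver is drawn in Figure \ref{fig:separated quiver}. This requires explicitly computing the $\BV(V)$-action on highest-weight vectors, reading off the resulting radical filtration, and matching it against the short list of indecomposables permitted by the quiver computed in Section \ref{sec:ext simple modules}. The remaining Propositions \ref{prop: sigma menos por sigma menos}, \ref{prop:fC} and \ref{prop:remainder} then partition the non-projective diagonal $(\lambda,\mu)\notin\Lsp\times\Lambda\cup\Lambda\times\Lsp$ according to which highest weights appear, and each is settled by the above procedure.
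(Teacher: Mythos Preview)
Your outline has several concrete errors that would prevent it from working.

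First, $\fL(e,-)$ is \emph{not} one-dimensional. The weight $(e,-)$ has dimension $1$ as a $\DSn$-module, but $\fL(e,-)=\fM(e,-)\simeq\BV(V)\ot(e,-)$ is a $12$-dimensional simple projective $\D$-module (it lies in $\Lsp$). It does not induce a tensor autoequivalence, and Propositions \ref{prop:1} and \ref{prop:fB} are not about tensoring with $\fL(e,-)$: the former treats $\fL(\tau,0)\ot\fL(e,\rho)$, the latter $\fL(e,\rho)\ot\fL(e,\rho)$. The only invertible simple is $\fL(\varepsilon)$, and this is already used via ``$\fL(\varepsilon)\ot-$ is the identity'' together with quasitriangularity to reduce to unordered pairs with $\lambda\neq\varepsilon$.

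Second, your assertion that each indecomposable summand is generated over $\BV(V)$ by its $\BV(\oV)$-invariants (i.e.\ is a highest-weight module) is precisely what fails here and is the main point of the paper. The non-simple, non-projective summands $\fA$, $\fB$, $\fC$ are explicitly \emph{not} highest-weight (nor lowest-weight) modules; their socles and heads are simple and isomorphic, sitting in the \emph{same} degree, so no analysis of highest weights alone can detect them. The paper's method (\S\ref{subsec:key tools}) is instead to compute $\chgr$ of the tensor product, write it in the $\Z[t,t^{-1}]$-basis $\{\chgr\fL(\lambda)\}$ to read off composition factors and peel off projective simples, and then for each remaining homogeneous weight $\lambda\subset\fN(i)$ compute the submodule $\D\lambda$ by explicitly evaluating the $\DSn$-morphisms $V\ot\lambda\to\fN(i-1)$ and $\oV\ot\lambda\to\fN(i+1)$ on the bases given in the appendix, tracking which weights are hit and how the submodules intersect.

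Third, there is no ``short list of indecomposables permitted by the quiver'': Proposition \ref{prop:wild} shows $\D$ is of \emph{wild} representation type, so no such classification exists. The identification of $\fA$, $\fB$, $\fC$ is not by lookup but by direct construction of their socle filtrations via the computations above. Finally, for the projective cases the paper does not use the $\Hom$-adjunction you wrote; it uses that $\chgr$ already determines a projective module up to isomorphism (equation \eqref{eq:proj D mod}) together with the formula $\fL(\lambda)\ot\fL(\mu)\simeq\fInd(\lambda\cdot\mu)$ for $\lambda,\mu\in\Lsp$ from \cite[Theorem 21]{vay-proj}.
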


The outcome of the above is resumed in Table \ref{tab:tensors}. We find out new indecomposable modules $\fA$, $\fB$ and $\fC$ which are not either simple or projective. We schematize them in Figures \ref{fig:fA}, \ref{fig:fB} and \ref{fig:fC}, respectively. If one of the factors is projective, the tensor product is also projective and then we can use results from \cite{vay-proj} in order to describe its direct summands. However, we do not have enough space in the table to write them except when both factors are projective. In this case, $\fInd(\lambda\cdot\mu)$ is the induced module $\D\ot_{\DSn}(\lambda\ot\mu)$ which is not necessary indecomposable. The cells under the diagonal are empty because $\D$ is quasitriangular and hence the tensor product is commutative. We do not include $\fL(\varepsilon)$ in the table because it is the unit object. 

\begin{table}[h!]
\begin{center}
\begin{tabular}{c|c|c|c|c|}
$\ot$ & $\fL(e,\rho)$ & $\fL(\tau,0)$ & $\fL(\sigma,-)$ & $\fL(\lambda)$, $\lambda\in\Lambda_{sp}$

\\ \hline

\multirow{2}{*}{$\fL(e,\rho)$} & \multirow{2}{*}{$\fL(e,-)\oplus\fB$} & $\fL(\tau,1)\oplus\fL(\tau,2)$ & \multirow{2}{*}{$\fL(\sigma,+)\oplus\fC$} &  Proposition

\\

&& $\oplus\fL(\varepsilon)$&& \ref{prop:simple by projective}

\\ \cline{1-4}

$\fL(\tau,0)$ &  &  $\fL(e,-)\oplus\fB^*$ & $\fL(\sigma,+)\oplus\fC^*$ &  

\\ \cline{1-1} \cline{3-4}

\multirow{2}{*}{$\fL(\sigma,-)$}  & \multicolumn{2}{c|}{}  & $\fL(\tau,1)\oplus\fL(\tau,2)$ & 

\\

  &  \multicolumn{2}{c|}{}  & $\oplus\fL(\varepsilon)\oplus\fA$ &

\\ \cline{1-1} \cline{4-5}

$\fL(\mu)$, $\mu\in\Lambda_{sp}$ & \multicolumn{3}{c|}{}  & $\fInd(\lambda\cdot\mu)$

\\ \hline

\end{tabular}
\end{center}
\caption{Tensor products of simple modules.}
\label{tab:tensors}
\end{table}

In conclusion, question (2) does not hold in this example. Instead, all the non-simple non-projective summands have the following in common. 
\begin{itemize}
 \item[$\circ$] They have simple head and simple socle. Moreover, these are isomorphic. 
 \item[$\circ$] Being graded, the socle and the head are concentrated in the same homogeneous components.
 \item[$\circ$] They are not either highest-weight modules or lowest-weight modules.
 \end{itemize}

The main difficult to deal with the modules $\fA$, $\fB$ and $\fC$ is that some weights have dimension greater than one, cf. Table \ref{tab:weight}. Hence the tensor product of two weights is not necessarily a weight, but it is the direct sum of various weights. These facts complicate the computations. However, the use of the following properties helps to simplify things. These properties hold in general and are not present in the above references. 

First of all, we can restrict our attention to the category of graded modules. This is because $\D$ is graded and finite-dimensional and hence simple modules, their tensor products and the indecomposable summands of the latter are graded by \cite{MR659212}. Notice that the category of graded $\D$-modules is a highest-weight category \cite{arXiv:1705.08024}. 

Let $\fN=\oplus_{i\in\Z}\fN(i)$ be a graded $\D$-module and $\chgr\fN$ its graded character, {\it i.e.} its representative in the Grothendieck ring of the category of graded $\D(\Sn_3)$-modules. Then
\begin{itemize}
 \item The graded composition factors of $\fN$ are given by $\chgr\fN$.
\end{itemize}
In fact, the graded characters of the simple modules form a $\Z[t,t^{-1}]$-basis of the Grothendieck ring of the category of graded $\D$-modules \cite[Theorem 9]{vay-proj}. However, two simple modules could have identical ungraded character as for instance $\fL(e,\rho)$ and $\fL(\tau,0)$, see Remark \ref{obs:ungraded character}.

In order to compute the indecomposable summands of $\fN$, we need to know how its composition factors are connected. For this purpose, we need to calculate the action of the space of generators 
$V$ of $\FK_3$ on a homogeneous weight $S$ of $\fN$, {\it i.e.} a simple $\DSn$-submodule of $\fN(i)$. Here, we shall use that
\begin{itemize}
 \item The action $V\ot S\longrightarrow\fN(i-1)$ is a morphism of $\DSn$-modules.
\end{itemize}
This last fact is also useful to classify the extensions of simple $\D$-modules, see Lemmas \ref{le:trivial extensions} and \ref{le:notrivial extension}.

\

The article is organized as follows. In Section \ref{sec:Preliminaries} we recall the structure of $\D$ and summarize all the notation and conventions. We study the extensions of the simple modules in Section \ref{sec:ext simple modules} and their tensor products in Section \ref{sec:tensores}. Finally, we describe the projective modules and their tensor products in Section \ref{sec:proyectivos}. In the
appendix we give the action of the generators of $\D$ on the simple modules.

\subsection*{Acknowledgments} 
We thank Nicol\'as Andruskiewitsch for suggesting us this project and for stimulating discussions. Part of this work was carried out during a visit of B. P. to the Universidad Nacional de 
C\'ordoba in the framwork of ``MathAmSud project GR2HOPF'', a visit of C. V. to the Universidade Federal do Rio Grande do Sul in the framework of ``Programa ESCALA Docente de la 
Asociasi\'on de Universidades Grupo Montevideo'', and also during a research stay of C. V. in the University of Clermont Ferrand (France) supported by CONICET. We also appreciate the thorough reading and the comments of the referee which have enriched our work.

\section{Preliminaries}\label{sec:Preliminaries}

We summarize all the information needed for our work. We follow the notation and conventions of \cite{PV2,vay-proj,vay-mca17}. Most of the properties which we will list hold for any finite dimensional Nichols algebra over a finite dimensional semisimple Hopf algebra. Nevertheless, we prefer recall them in our particular case to the benefit of the reader. The general statements could be find in {\it loc.cit.} 

\subsection{The Drinfeld double of \texorpdfstring{$\FK_3\#\ku\Sn_3$}{FK3}}\label{subsec:lo basico}

We begin by fixing the notation related to the Drinfeld double $\D=\D(\FK_3\#\ku\Sn_3)$. 

\begin{enumerate}[label=(\alph*)]
\item  We denote $\DSn$ the Drinfeld double of $\ku\Sn_3$. As an algebra, $\DSn$ is generated by the group-like elements of $\Sn_3$ and the dual elements $\delta_g$, $g\in\Sn_3$.
  \item $V=\ku\{\xij{12},\xij{13},\xij{23}\}$ is a simple $\DSn$-module via
 \begin{align*}
  g\cdot\xij{ij}=\sgn(g)x_{g(ij)g^{-1}}\quad\mbox{and}\quad \delta_g\cdot\xij{ij}=\delta_{g,(ij)}\,\xij{ij}.
 \end{align*}
  \item $\oV=\ku\{\yij{12},\yij{13},\yij{23}\}$ is the dual object of $V$ in the category of $\DSn$-modules. They are isomorphic via $\yij{ij}\mapsto\xij{ij}$.
  \smallskip
  \item The Nichols algebra $\BV(V)$ is the quotient of the tensor algebra $T(V)$ by
  \begin{align*}
  \xij{ij}^2,\quad\xij{12}\xij{13}+\xij{23}\xij{12}+\xij{13}\xij{23},\quad\xij{13}\xij{12}+\xij{12}\xij{23}+\xij{23}\xij{13}
  \end{align*}
  for all $i,j$. $\BV(V)$ and $\BV(\oV)$ are both isomorphic to $\FK_3$. 
  \smallskip
  \item\label{item:triangular decomposition} $\D$ is generated as an algebra by $\DSn$, $V$ and $\oV$. We have a triangular decomposition
  \begin{align*}
  \BV(V)\ot\DSn\ot\BV(\oV)\longrightarrow\D,
 \end{align*}
 {\it i.e.} the multiplication induces a linear isomorphism. Moreover, $\D$ is generated by $\DSn$, $\xij{12}$ and $\yij{12}$ because $\Sn_3$ acts transitively in the bases of $V$ and $\oV$.
  \item\label{property:graded D} $\D$ is a graded algebra with $\deg V=-1$, $\deg\DSn=0$ and $\deg\oV=1$.
   \smallskip
  \item\label{property:graded subalgebras} $\D^{\leq0}=\BV(V)\#\DSn$, $\D^{\geq0}=\BV(\oV)\#\DSn$ and $\DSn$ are graded Hopf subalgebras \cite[Lemma 8 and (25)]{PV2} and \cite[Lemma 4.3]{vay-mca17}.
   \smallskip
  \item\label{item:comultiplication} The comultiplication of $\D$ is completely determined by
  \begin{align*}
  &\Delta(g)=g\ot g\quad\mbox{and}\quad\Delta(\delta_g)=\sum_{h\in G}\delta_h\ot\delta_{h^{-1}g}\quad\mbox{for all $g\in\Sn_3$};\\
  \Delta(\xij{ij})&=\xij{ij}\ot1+(ij)\ot\xij{ij}\quad\mbox{and}\\
  \Delta(\yij{ij})&=\yij{ij}\ot1+\sum_{g\in\Sn_3}\sgn(g)\delta_g\ot y_{g^{-1}(ij)g}\quad\mbox{for all transpositions $(ij)\in\Sn_3$}.
  \end{align*}
\end{enumerate}

\begin{obs}
$\D$ is a spherical Hopf algebra \cite{spherical1}. The pivot is the $\sgn$ representation. Explicitly, $\sgn=\sum_{g\in\Sn_3}\sgn(g)\,\delta_g\in\DSn$. 

In fact, it is an involution and it is easy to check that $\cS^2(h)=\sgn\cdot h\cdot\sgn$ $\forall h\in\D$.
\end{obs}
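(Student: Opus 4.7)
The plan is to verify three things in sequence: (1) that $\sgn\in\DSn$ is a group-like element, (2) that it is an involution, and (3) that it implements $\cS^2$ by inner conjugation on all of $\D$. Sphericality will then follow from (2), since an involutive pivot automatically satisfies the left--right quantum-trace condition in the Barrett--Westbury definition.

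First, I would observe that the group-like elements of $\ku^{\Sn_3}$ are exactly the characters of $\Sn_3$, written as $\chi=\sum_{g}\chi(g)\,\delta_g$. Hence $\sgn$ is group-like in $\ku^{\Sn_3}\subset\DSn\subset\D$. The identity $\sgn^2=1$ is immediate from $\sgn(g)^2=1$ for all $g\in\Sn_3$ and the orthogonality $\delta_g\delta_h=\delta_{g,h}\delta_g$, which yields $\sgn^2=\sum_g\delta_g=1$.

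By the triangular decomposition \ref{item:triangular decomposition}, and because $\Sn_3$ acts transitively on the bases of $V$ and $\oV$ (so conjugation by $\sgn$ and $\cS^2$ are both $\DSn$-linear on these modules), it suffices to check $\cS^2(h)=\sgn\cdot h\cdot\sgn$ for $h\in\DSn$, for $h=\xij{12}$, and for $h=\yij{12}$. On $\DSn$ the square of the antipode of a Drinfeld double of a group algebra is the identity, and using the cross relation $g\,\delta_h=\delta_{ghg^{-1}}\,g$ one checks by re-indexing that $\sgn$ commutes with every $g\in\Sn_3$ and manifestly with every $\delta_h$; thus $\sgn$ is central in $\DSn$ and $\sgn\cdot h\cdot\sgn=h\sgn^2=h$. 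For the generator $\xij{ij}$, I would use \ref{item:comultiplication} to compute $\cS(\xij{ij})=-(ij)\,\xij{ij}$ and then, with the action $(ij)\cdot\xij{ij}=-\xij{ij}$, obtain $\cS^2(\xij{ij})=-\xij{ij}$. On the other side, applying $\Delta(\delta_g)=\sum_k\delta_k\otimes\delta_{k^{-1}g}$ in the smash product yields $\delta_g\,\xij{ij}=\xij{ij}\,\delta_{(ij)g}$, so after reindexing $\sgn\cdot\xij{ij}=-\xij{ij}\cdot\sgn$, giving $\sgn\cdot\xij{ij}\cdot\sgn=-\xij{ij}$, which matches. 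The analogous calculation for $\yij{ij}$ proceeds from the second formula in \ref{item:comultiplication}: the antipode pulls out an extra factor of $\sgn(g)$ whose square again produces the $-1$ that matches conjugation by $\sgn$.

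The only real obstacle is keeping the various sign conventions and swap rules straight on both sides of the desired equality, but once the rules $\delta_g\xij{ij}=\xij{ij}\delta_{(ij)g}$ and its $\yij{ij}$-analogue are in hand, the verification reduces to short bookkeeping on a transitive orbit of generators. Finally, sphericality follows because $\sgn^{-1}=\sgn$: for every finite-dimensional $\D$-module $M$ and $f\in\End_\D(M)$, the quantum traces $\mathrm{tr}_M(\sgn\circ f)$ and $\mathrm{tr}_M(\sgn^{-1}\circ f)$ coincide tautologically, so the pair $(\D,\sgn)$ is a spherical Hopf algebra in the sense of \cite{spherical1}.
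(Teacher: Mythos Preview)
Your proposal is correct and carries out in full the verification that the paper leaves as ``easy to check.'' The paper offers no argument beyond that phrase, so there is nothing to compare at the level of strategy: you and the paper both reduce to generators via the triangular decomposition, and you supply the bookkeeping the paper omits. One small stylistic point: your parenthetical about $\cS^2$ and conjugation by $\sgn$ being ``$\DSn$-linear'' is a slightly roundabout way to justify checking only $\xij{12}$ and $\yij{12}$; the cleaner reason is simply that both maps are algebra automorphisms, so agreement on the generating set $\DSn\cup\{\xij{12},\yij{12}\}$ of \ref{item:triangular decomposition} suffices.
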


\subsection{Graded Modules}

Our objects of study are the finite-dimensional $\Z$-graded left modules over $\D$, {\it graded modules} for short. We will consider the graded $\D$-modules as graded modules over $\D^{\leq0}$, $\D^{\geq0}$ and $\DSn$  by restricting the action. In particular, the following is a direct consequence of \ref{property:graded subalgebras} above and it is a particular case of \cite[Proposition 5.2]{vay-mca17}.

\begin{lema}\label{le:forgetful functor}
The restriction of scalars is a monoidal functor from the category of graded $\D$-modules to the category of graded $\DSn$-modules.\qed 
\end{lema}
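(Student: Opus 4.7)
The plan is to package this as an instance of the general fact that restriction of scalars along a Hopf subalgebra inclusion is a monoidal functor, and then check that the $\Z$-grading is preserved. I would present the argument in two steps: first define the functor and verify it lands in graded $\DSn$-modules, and second produce the (in fact identity) coherence isomorphisms exhibiting the monoidal structure.

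For the first step, I would appeal directly to property \ref{property:graded subalgebras} of the previous subsection: the inclusion $\DSn\hookrightarrow\D$ is that of a graded Hopf subalgebra, with $\DSn$ concentrated in degree $0$. Given a graded $\D$-module $\fN=\bigoplus_{i\in\Z}\fN(i)$, the restricted action of any $h\in\DSn$ has degree $0$, hence preserves each $\fN(i)$; therefore $\fN$, equipped with the same underlying grading, is a graded $\DSn$-module. Morphisms of graded $\D$-modules are by definition $\D$-linear and grade-preserving, so they remain graded $\DSn$-linear after restriction. Call this functor $\For$.

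For the second step, I would exhibit the monoidal structure. Given graded $\D$-modules $\fM$ and $\fN$, the tensor product $\fM\otimes\fN$ carries the $\D$-action induced by the comultiplication of $\D$ recalled in \ref{item:comultiplication}, and the standard grading $(\fM\otimes\fN)(k)=\bigoplus_{i+j=k}\fM(i)\otimes\fN(j)$. Since $\DSn$ is a Hopf subalgebra of $\D$, the comultiplication $\Delta_\D$ restricts to the comultiplication $\Delta_{\DSn}$ on $\DSn$; consequently, for every $h\in\DSn$, the action of $h$ on $\For(\fM\otimes\fN)$ coincides with the action of $h$ on $\For(\fM)\otimes\For(\fN)$. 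Thus the identity on the underlying vector space provides a natural graded isomorphism $\For(\fM\otimes\fN)\simeq\For(\fM)\otimes\For(\fN)$, and similarly $\For(\ku)=\ku$ since the unit object is $\ku$ concentrated in degree $0$ with trivial action on both sides.

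Finally, the hexagon and triangle axioms for a monoidal functor are automatic here because all coherence isomorphisms (associativity and unit constraints) are identical on the two sides — they are the usual ones for tensor products of vector spaces, untouched by the restriction. There is no real obstacle in this proof; the only subtlety worth mentioning is precisely the compatibility between $\Delta_\D$ and $\Delta_{\DSn}$, which is exactly what is guaranteed by \ref{property:graded subalgebras}. The lemma is therefore a specialization of \cite[Proposition 5.2]{vay-mca17}, which is how I would close the argument.
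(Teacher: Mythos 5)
Your proof is correct and follows the same route as the paper: the paper simply notes that the lemma is a direct consequence of property (g) (that $\DSn$ is a graded Hopf subalgebra of $\D$) and cites it as a particular case of \cite[Proposition 5.2]{vay-mca17}, which is exactly the general fact you invoke and then spell out.
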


By the definition of Nichols algebra, $\BV(V)$ (resp. $\BV(\oV)$) is an algebra in the category of graded $\DSn$-modules and, as we mention in \ref{property:graded subalgebras}, $\D^{\leq0}$ (resp. $\D^{\geq0}$) is the corresponding bosonization. Then the following is clear, see for instance \cite[(31)]{PV2}.

\begin{lema}
The category of graded $\D^{\leq0}$-modules (resp. $\D^{\geq0}$-modules) is equivalent to the category of graded $\BV(V)$-modules (resp. $\BV(\oV)$-modules) in the category of graded $\DSn$-modules. \qed
\end{lema}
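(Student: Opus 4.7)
The plan is to use the general fact that modules over a bosonization $B\#H$ correspond to modules over $B$ inside the monoidal category of $H$-modules, and then check that everything respects the grading. I will only spell out the case of $\D^{\leq0}$; the $\D^{\geq0}$ case is strictly parallel.

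First I would recall from property \ref{property:graded subalgebras} that $\D^{\leq0}=\BV(V)\#\DSn$ as a graded Hopf algebra, with $\DSn$ concentrated in degree $0$ and $\BV(V)$ inheriting its grading from $\deg V=-1$. From item \ref{item:comultiplication}, the smash product structure is governed by the $\DSn$-action on $V$ described in (b), so the relation in the bosonization reads $h\cdot b=(h\_1\cdot b)\,h\_2$ for $h\in\DSn$ and $b\in\BV(V)$, where $h\_1\cdot b$ denotes the $\DSn$-action on $\BV(V)$.

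Next I would define the two functors. Given a graded $\D^{\leq0}$-module $M$, restriction of scalars along the inclusions $\DSn\hookrightarrow\D^{\leq0}$ and $\BV(V)\hookrightarrow\D^{\leq0}$ makes $M$ simultaneously a graded $\DSn$-module and a graded $\BV(V)$-module. Using the commutation rule displayed above, a short computation shows that the $\BV(V)$-action map $\BV(V)\otimes M\longrightarrow M$ is $\DSn$-linear, i.e. $M$ is a $\BV(V)$-module in the monoidal category of $\DSn$-modules; the grading on this action is preserved because multiplication in $\D^{\leq0}$ is graded. Conversely, given a graded $\BV(V)$-module $M$ in the category of graded $\DSn$-modules, I would define
\begin{equation*}
(b\#h)\cdot m := b\cdot(h\cdot m),\qquad b\in\BV(V),\ h\in\DSn,\ m\in M,
\end{equation*}
and verify that this is associative and unital using precisely the $\DSn$-linearity of the $\BV(V)$-action together with the multiplication rule of the smash product. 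Since $\deg\DSn=0$ and $\BV(V)$ acts by degree-preserving operators for its own grading, this gives $M$ a graded $\D^{\leq0}$-module structure.

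Finally I would check that the two constructions are mutually inverse (essentially by construction, since the smash-product multiplication is generated by the two subalgebra actions) and functorial with respect to graded $\D^{\leq0}$-linear maps, because those are exactly the graded maps that are simultaneously $\BV(V)$-linear and $\DSn$-linear. I do not expect any serious obstacle here; the only points requiring a touch of care are verifying the compatibility identity $h\cdot(b\cdot m)=(h\_1\cdot b)\cdot(h\_2\cdot m)$ from the bosonization relation and confirming that the grading is preserved by all the pieces — both of which are immediate from \ref{property:graded D}, \ref{property:graded subalgebras} and \ref{item:comultiplication}.
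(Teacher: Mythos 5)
Your proof is correct and follows the same idea as the paper. The paper treats the lemma as an instance of the general fact that modules over a bosonization $B\#H$ are the same as $B$-modules internal to $H$-modules (it just observes that $\D^{\leq0}=\BV(V)\#\DSn$ is a graded bosonization and declares the result ``clear'', with a pointer to \cite[(31)]{PV2}); your write-up simply spells out that standard argument and checks the compatibility with the grading, which is exactly the content being invoked.
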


We will use the next consequence of this lemma.

\begin{obs}\label{obs:action maps}
Let $\fN=\oplus_{i\in\Z}\fN(i)$ be a graded module. Then the action maps $V\ot\fN(i)\longrightarrow\fN(i-1)$ and $\oV\ot\fN(i)\longrightarrow\fN(i+1)$ are morphisms of $\DSn$-modules. 
\end{obs}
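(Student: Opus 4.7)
The plan is to deduce the remark directly from the preceding lemma. If $\fN$ is a graded $\D^{\leq0}$-module, then by that lemma its structure map $\mu\colon\BV(V)\ot\fN\longrightarrow\fN$ is a morphism in the category of graded $\DSn$-modules, where $\BV(V)\ot\fN$ carries the diagonal $\DSn$-action coming from the comultiplication of $\DSn$. I would then restrict $\mu$ to the degree $-1$ homogeneous component of $\BV(V)$, which is precisely $V\subset\BV(V)$, obtaining a $\DSn$-linear map $V\ot\fN\longrightarrow\fN$. Since $\deg V=-1$ by property \ref{property:graded D}, homogeneity forces this map to send $V\ot\fN(i)$ into $\fN(i-1)$, which is the first assertion.

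The second assertion is entirely analogous: view $\fN$ as a graded $\D^{\geq0}$-module by restriction of scalars and apply the same argument with $\BV(\oV)$ in place of $\BV(V)$, noting $\deg\oV=+1$ so that now the degree shifts from $i$ to $i+1$.

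For a hands-on verification bypassing the categorical equivalence, one can instead unpack the bosonization relation. Combining \ref{property:graded subalgebras} with \ref{item:comultiplication}, the multiplication of $\D^{\leq0}$ satisfies the straightening rule $hv=(h\_1\cdot v)\,h\_2$ for $h\in\DSn$ and $v\in V$; applying both sides to $n\in\fN$ yields $h\cdot(v\cdot n)=(h\_1\cdot v)\cdot(h\_2\cdot n)$, which is exactly the $\DSn$-linearity of $V\ot\fN\to\fN$, and the same formula with $\oV$ in place of $V$ handles the second map. There is no real obstacle here: the content of the remark is essentially an unpacking of the bosonization formalism already encoded in \ref{property:graded subalgebras} and the preceding lemma, and the only thing to check beyond the $\DSn$-linearity itself is the degree shift, which is immediate from property \ref{property:graded D}.
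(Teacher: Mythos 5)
Your proof is correct and takes essentially the same approach the paper intends: the remark is stated there as a direct consequence of the preceding lemma, which is precisely your first paragraph, and your hands-on check via the bosonization relation $hv=(h\_1\cdot v)h\_2$ is a valid unpacking of the same fact.
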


\subsection{Weights}\label{subsec:weights}
As we mention in the introduction the simple $\DSn$-modules are the {\it weights} in our context. These are parametrized by
\begin{align*}
\Lambda=\left\{\varepsilon=(e,+),\, (e,-),\, (e,\rho),\, (\sigma,+),\, (\sigma,-),\, (\tau,0),\, (\tau,1),\, (\tau,2)\right\},
\end{align*}
recall Table \ref{tab:weight}. We give their explicit structure in the appendix. We identify every {\it weight} $\lambda\in\Lambda$ with the simple $\DSn$-module $M(\lambda)$ in \cite[\S 5.2]{PV2}. 

A {\it highest} (resp. {\it lowest}) {\it weight} is a weight $\lambda$ which also is a simple module over $\D^{\geq0}$ with $\oV\cdot\lambda=0$ (resp. $\D^{\leq0}$ with $V\cdot\lambda=0$). A {\it highest-weight} (resp. {\it lowest-weight}) {\it module} is a $\D$-module generated by a {\it highest-weight} (resp. {\it lowest-weight}).

\subsection{Characters} 

The Grothendieck ring of the category of $\DSn$-modules is the abelian group $K=\Z\Lambda$ endowed with the product $\lambda\cdot\mu=M(\lambda)\ot M(\mu)$ and unit $\varepsilon$. These tensor products were explicitly given in \cite[\S 5.2.4]{PV2}. We will often use these fusion rules in the coming section. The Grothendieck ring of $\D(G)$, for any finite group $G$, was described in \cite{MR1367852}. Given a $\DSn$-module $N$, the {\it character} $\ch N$ is the representative of $N$ in the Grothendieck ring $K$. 

We shall consider $\DSn$ as a graded algebra concentrated in degree zero. If $N$ is a graded $\DSn$-module, we denote $N(i)$ its homogeneous component of degree $i$. The {\it shift of grading functor} $[1]$ is defined by $N[1](i)=N(i-1)$. Thus, the Grothendieck ring $K^{\bullet}$ of the category of $\DSn$-modules is a $\Z[t,t^{-1}]$-algebra if we identify $t^{\pm1}$ with $\varepsilon[\pm1]$. Therefore $K^{\bullet}=K[t,t^{-1}]$ via the {\it graded character} 
$$
\chgr N=\sum_{i\in\Z}\ch N(i)\,t^i\in K[t,t^{-1}].
$$
For instance,
\begin{align}\label{eq:character of FK3}
\chgr\BV(V)=\chgr\BV(\oV)=\varepsilon+(\sigma,-)\,t^{-1}+\left((\tau,1)+(\tau,2)\right)\,t^{-2}+(\sigma,-)\,t^{-3}+\varepsilon\,t^{-4}.
\end{align}
Notice that there exist polynomials $p_{N,\lambda}\in\Z[t,t^{-1}]$ such that 
$$
\chgr N=\sum_{\lambda\in\Lambda}\,p_{N,\lambda}\,\lambda\Longleftrightarrow N\simeq\oplus_{\lambda\in\Lambda}\,p_{N,\lambda}\cdot\lambda.
$$
We have that $N^*(i)=(N(-i))^*$ and hence $\chgr N^*=\sum_{\lambda\in\Lambda}\,\overline{p_{N,\lambda}}\,\lambda^*$ where $\overline{p(t,t^{-1})}$ $=p(t^{-1},t)$ for any $p\in\Z[t,t^{-1}]$.

We will often use the following consequence of Lemma \ref{le:forgetful functor}. 

\begin{lema}\label{le:chgr es morf de anillos}
Let $R^{\bullet}$ be the Grothendieck ring of the category of graded modules. Then $\chgr:R^\bullet\longrightarrow K[t,t^{-1}]$ is a morphism of $\Z[t,t^{-1}]$-algebras. \qed
\end{lema}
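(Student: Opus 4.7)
The plan is to verify the three conditions defining a $\Z[t,t^{-1}]$-algebra morphism: well-definedness on the Grothendieck ring (additivity on short exact sequences), $\Z[t,t^{-1}]$-linearity (compatibility with the grading shift), and multiplicativity on tensor products. All three should follow formally from the definitions together with Lemma \ref{le:forgetful functor}.

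First, I would check that $\chgr$ descends to $R^\bullet$. A short exact sequence $0\to\fM\to\fN\to\fP\to 0$ in the category of graded $\D$-modules restricts, degree by degree, to a short exact sequence $0\to\fM(i)\to\fN(i)\to\fP(i)\to 0$ of (ungraded) $\DSn$-modules. Hence $\ch\fN(i)=\ch\fM(i)+\ch\fP(i)$ in $K$ for every $i\in\Z$, and multiplying by $t^i$ and summing yields $\chgr\fN=\chgr\fM+\chgr\fP$ in $K[t,t^{-1}]$.

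Next, I would check the $\Z[t,t^{-1}]$-linearity. Since $t^{\pm 1}$ is identified with $\varepsilon[\pm 1]$ and the shift functor on graded $\D$-modules and on graded $\DSn$-modules are compatible via restriction of scalars, we have $(\fN[1])(i)=\fN(i-1)$. Therefore
$$
\chgr(\fN[1])=\sum_{i\in\Z}\ch\fN(i-1)\,t^{i}=t\sum_{i\in\Z}\ch\fN(i)\,t^{i}=t\cdot\chgr\fN,
$$
and a similar computation with $[-1]$ handles $t^{-1}$. By additivity this extends to arbitrary elements of $\Z[t,t^{-1}]$.

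Finally, for multiplicativity, let $\fM,\fN$ be graded $\D$-modules. The tensor product $\fM\ot\fN$ in graded $\D$-modules has homogeneous component $(\fM\ot\fN)(k)=\bigoplus_{i+j=k}\fM(i)\ot\fN(j)$. By Lemma \ref{le:forgetful functor}, the restriction of scalars to $\DSn$ is monoidal, so this is an isomorphism of $\DSn$-modules with the $\DSn$-action being the tensor product of the restrictions. Taking characters in $K$ and using that $\ch$ is a ring morphism on $\DSn$-modules, one obtains
$$
\chgr(\fM\ot\fN)=\sum_{k\in\Z}\sum_{i+j=k}\ch\fM(i)\cdot\ch\fN(j)\,t^{k}=\Big(\sum_{i}\ch\fM(i)\,t^{i}\Big)\Big(\sum_{j}\ch\fN(j)\,t^{j}\Big)=\chgr\fM\cdot\chgr\fN.
$$
Since the unit of $R^\bullet$ is $\fL(\varepsilon)$, concentrated in degree $0$ with character $\varepsilon$, its image is $\varepsilon\in K[t,t^{-1}]$, the unit on the right. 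There is no real obstacle here: the statement is essentially a formal consequence of the monoidality of the restriction functor, and the only subtlety is keeping track of the grading shift, which the definition of $\chgr$ is designed to absorb.
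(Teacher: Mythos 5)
Your proof is correct and takes the same route the paper intends: the paper records this lemma with a \qed and the one-line remark that it is a consequence of Lemma~\ref{le:forgetful functor}, and your argument is exactly the routine unwinding of that remark, using the degree-by-degree exactness for additivity, the shift functor for $\Z[t,t^{-1}]$-linearity, and the monoidality of restriction of scalars for multiplicativity.
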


\subsection{The simple modules}

Given $\lambda\in\Lambda$, $\fL(\lambda)$ denotes the unique simple module of highest-weight $\lambda$. This is graded and every simple module is isomorphic to some $\fL(\lambda)$ \cite[Theorem 3]{PV2}. The simple modules also are distinguished by their lowest-weights \cite[Theorem 4]{PV2}. In \cite{PV2} we have studied the simple $\D$-modules in details. Their graded characters are
\begin{align*}
 \chgr\fL(\varepsilon)&=\varepsilon,\\
\noalign{\smallskip}
 \chgr\fL(e,\rho)&=(e,\rho)+(\sigma,+)\,t^{-1}+(\tau,0)\,t^{-2},\\
 \noalign{\smallskip}
  \chgr\fL(\tau,0)&=(\tau,0)+(\sigma,+)\,t^{-1}+(e,\rho)\,t^{-2},\\
 \noalign{\smallskip}
 \chgr\fL(\sigma,-)&=(\sigma,-)+\left((\tau,1)+(\tau,2)\right)\,t^{-1}+(\sigma,-)\,t^{-2},\\
 \noalign{\smallskip}
 \chgr\fL(\lambda)&=\lambda\cdot\chgr\BV(V),\quad\forall\,\lambda\in\Lsp:=\{(e,-),(\sigma,+),(\tau,1),(\tau,2)\}.
\end{align*}

By \cite[Theorem 6]{PV2} and \cite[Corollary 17]{vay-proj}, $\fL(\lambda)$ is projective (and injective because any finite-dimensional Hopf algebra is Frobenius) if and only if $\lambda\in\Lsp$.
The remainder simple modules generate a single block of the category of $\D$-modules because they are composition factors of an indecomposable module, the Verma module of $(\sigma,-)$ \cite[Theorem 7]{PV2}. Verma modules are recalled in the next subsection.

\begin{obs}\label{obs:ungraded character}
As (ungraded) $\DSn$-modules $\fL(e,\rho)\simeq\fL(\tau,0)$, that is they have identical (ungraded) character but they are not isomorphic as $\D$-modules.
\end{obs}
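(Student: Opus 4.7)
The plan is to treat the two assertions separately. For the identity of ungraded $\DSn$-characters, I would simply specialize the formulas for $\chgr\fL(e,\rho)$ and $\chgr\fL(\tau,0)$ displayed just above the remark at $t=1$. Both evaluate to
\begin{equation*}
(e,\rho)+(\sigma,+)+(\tau,0)\in K,
\end{equation*}
since the multisets of graded composition factors coincide and only their degrees differ. In particular, as ungraded $\DSn$-modules both $\fL(e,\rho)$ and $\fL(\tau,0)$ decompose as the direct sum of the three pairwise non-isomorphic simple $\DSn$-modules $(e,\rho)$, $(\sigma,+)$ and $(\tau,0)$, so they are isomorphic.

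For the non-isomorphism as $\D$-modules, I would invoke the characterization of simple $\D$-modules by their highest weight recalled in \S\ref{subsec:weights} (i.e.\ \cite[Theorem 3]{PV2}): the assignment $\lambda\mapsto\fL(\lambda)$ is a bijection between $\Lambda$ and isomorphism classes of simple $\D$-modules. Since $(e,\rho)\neq(\tau,0)$ in $\Lambda$, the modules $\fL(e,\rho)$ and $\fL(\tau,0)$ are not isomorphic over $\D$. Equivalently, one can read this off the graded characters themselves, since the top-degree (highest-weight) homogeneous component of $\fL(e,\rho)$ is the weight $(e,\rho)$ while that of $\fL(\tau,0)$ is $(\tau,0)$, so the two modules are not even isomorphic as graded $\DSn$-modules.

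There is no real obstacle; the remark is a cautionary observation rather than a deep result. Its point is precisely that the ungraded $\DSn$-character does \emph{not} separate simple $\D$-modules in this setting, which is why throughout the paper one must work with the full graded character $\chgr\fL(\lambda)\in K[t,t^{-1}]$, which does separate them by \cite[Theorem 9]{vay-proj} and the fact that simple modules are graded by \cite{MR659212}.
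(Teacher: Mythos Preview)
Your proposal is correct and matches what the paper intends: the remark is stated without proof, precisely because it is immediate from the graded characters displayed just above it (specialize at $t=1$) together with the fact that $\lambda\mapsto\fL(\lambda)$ is a bijection from $\Lambda$ to isomorphism classes of simple $\D$-modules. Your added comment that the modules are already distinguished as graded $\DSn$-modules by their top weight is a fine alternative justification for the non-isomorphism.
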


These simple modules are self-dual except for
\begin{align*}
\fL(e,\rho)^*\simeq\fL(\tau,0)\quad\mbox{and}\quad\fL(\tau,1)^*\simeq\fL(\tau,2).
\end{align*}
Let $\overline{\lambda}$ denote the lowest-weight of $\fL(\lambda)$. Then
\begin{align}\label{eq:lambda barra}
\overline{(e,\rho)}=(\tau,0),\quad\overline{(\tau,0)}=(e,\rho)\quad\mbox{and}\quad \overline{\lambda}=\lambda
\quad\forall\,\lambda\in\Lambda\setminus\{(e,\rho),(\tau,0)\}.
\end{align}

\begin{obs}
The anonymous referee informs us that the bijection in \eqref{eq:lambda barra} corresponds to the unique non-trivial braided autoequivalence of the category of $\DSn$-modules by \cite[\S 6.6]{LENTNER2017264} and \cite[\S 8.1]{NIKSHYCH2014191}. It will be interesting to know whether this is a general fact. More precisely, let $\D$ be the Drinfeld double of the bosonization $\BV(V)\#H$ of a finite-dimensional Nichols algebra and a finite-dimensional semisimple Hopf algebra. Does ``picking the lowest-weight of a simple highest-weight module'' define a braided autoequivalences of the category of $\D(H)$-modules? We hope to address this question in future works.
\end{obs}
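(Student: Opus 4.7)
The plan is to verify the referee's concrete claim by a direct computation in the $\Sn_3$ case, and then propose a line of attack for the general question.

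For the first part: by \cite[\S 6.6]{LENTNER2017264} and \cite[\S 8.1]{NIKSHYCH2014191}, the group of braided autoequivalences of $\ydg$ with $G=\Sn_3$ has a unique non-trivial element not arising from obvious group-theoretic sources (note that $\mathrm{Out}(\Sn_3)=1$ and the Schur multiplier of $\Sn_3$ is trivial, so twists by group (co)cycles do not produce extra symmetries). I would identify the permutation of $\Lambda$ this autoequivalence induces by imposing that any braided autoequivalence must preserve the modular data ($S$- and $T$-matrices) of $\ydg$. Using the standard formula for the ribbon twist $\theta_{(g,\chi)}=\chi(g)/\dim\chi$, the categorical dimensions, and the fusion rules of $\ydg$ recalled from \cite[\S 5.2.4]{PV2}, one checks that the only non-trivial permutation of $\Lambda$ respecting both sets of invariants is the exchange $(e,\rho)\leftrightarrow(\tau,0)$ with every other weight fixed; this is exactly \eqref{eq:lambda barra}, confirming the referee's claim.

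For the open question: one observes that \eqref{eq:lambda barra} admits an intrinsic characterization, since $\overline{\lambda}$ is the highest weight of the simple dual module $\fL(\lambda)^*$ and appears also as a $\D(\Sn_3)$-submodule of $\fL(\lambda)$ (the lowest-weight component). This suggests defining, in the general setting, an assignment $F\colon\lambda\mapsto\overline{\lambda}$ on simple $\D(H)$-modules that arise as highest weights, and attempting to extend it to a braided autoequivalence of $\ydh$. One would first verify that every simple $\D(H)$-module does occur as a highest weight of some $\fL(\mu)$ (which is known for Drinfeld doubles of bosonizations of finite-dimensional Nichols algebras over semisimple Hopf algebras), then check fusion compatibility on the character level by analyzing the action of $\BV(V)$ and $\BV(\overline{V})$ on Verma modules.

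The main obstacle will be showing that $F$ extends to a \emph{monoidal} and \emph{braided} equivalence. Table \ref{tab:tensors} already exhibits the phenomenon that tensor products of highest-weight simples contain the non-highest-weight indecomposables $\fA$, $\fB$, $\fC$, so the behaviour of the lowest-weight map on fusion products is subtle; one must verify the identity $\overline{\lambda\cdot\mu}=\overline{\lambda}\cdot\overline{\mu}$ at the level of $\D(H)$-fusion rules, which is not formal even after knowing it holds for simple composition factors. Beyond this, the braided compatibility demands an intrinsic description of the $R$-matrix of $\D(\BV(V)\#H)$ in terms of $\BV(V)$ and $H$ alone, which is not available in full generality; this is the deep structural gap that the remark leaves as an open problem and that a future work would have to bridge.
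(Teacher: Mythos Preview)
This statement is a \emph{remark}, not a proposition or lemma: the paper gives no proof at all. It simply records the referee's observation, attributes the computation of the braided autoequivalence group of $\DSn$-modules to the references \cite{LENTNER2017264,NIKSHYCH2014191}, and explicitly leaves the general question open (``We hope to address this question in future works''). So there is nothing in the paper to compare your argument against.

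What you have written is therefore not a competing proof but rather (i) an independent sketch of how one might re-derive the cited fact for $\Sn_3$ using modular data, and (ii) a research outline for the open problem. Part (i) is a sensible heuristic --- comparing twists and fusion rules will indeed pin down the only non-trivial permutation of $\Lambda$ compatible with the modular data --- but note that matching $S$- and $T$-matrices is in general only a necessary condition for a permutation of simples to come from a braided autoequivalence; to actually conclude, you still need the classification results of \cite{LENTNER2017264,NIKSHYCH2014191}, which is exactly what the paper cites. Part (ii) correctly identifies the real difficulties (extending $\lambda\mapsto\overline{\lambda}$ to a monoidal, then braided, functor), but this is precisely the content the authors flag as future work; your outline does not resolve it, and no resolution is expected here.

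In short: the paper offers no proof for you to match, and your proposal should be understood as commentary on an open remark rather than as a proof of a stated result.
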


\

The set $\left\{\chgr\fL(\lambda)\mid\lambda\in\Lambda\right\}$ is a $\Z[t,t^{-1}]$-basis of $R^{\bullet}$ by \cite[Theorem 9]{vay-proj}. Then, for every graded module $\fN$ there are unique Laurent polynomials $p_{\fN,\fL(\lambda)}$ such that
$$\chgr\fN=\sum_{\lambda\in\Lambda}p_{\fN,\fL(\lambda)}\chgr\fL(\lambda).$$
We can deduce the following information from these polynomials.

\begin{obs}\label{obs:sobre los subcocientes}
Assume that $p_{\fN,\fL(\lambda)}=\sum a_{\fN,\fL(\lambda),i}\,t^i$ with $a_{\fN,\fL(\lambda),i}\neq0$.
\begin{enumerate}[label=(\roman*)]
\smallskip
 \item $\fN$ has $a_{\fN,\fL(\lambda),i}$ composition factors isomorphic to $\fL(\lambda)[i]$.
 \smallskip
 \item  If $\fL(\lambda)$ is projective, then $a_{\fN,\fL(\lambda),i}\,\fL(\lambda)[i]$ is a direct summand of $\fN$.
 \smallskip
 \item There exists a weight $S\subset\fN(i)$ isomorphic to $\lambda$ such that $\D S/\fX\simeq\fL(\lambda)[i]$ for some maximal submodule $\fX$ of $\D S$.
\smallskip 
 \item Let $S\subset\fN(i)$ be a weight isomorphic to $\lambda$ and $\fX$ a maximal submodule of $\D S$. Then $\D S/\fX\simeq\fL(\mu)[j]$ such that $a_{\fN,\fL(\mu),j}\neq0$ and $\lambda[i]$ is a weight of $\fL(\mu)[j]$. In particular, $\D S/\fX\simeq\fL(\lambda)[i]$ if $\lambda[i]$ is not a weight of any composition factor $\fL(\mu)[j]$ of $\fN$ with $\mu\neq\lambda$ or $j\neq i$.
\end{enumerate}

In fact, (i), (ii) and (iii) are clear, cf. \cite[\S 3.2]{vay-proj}. Since every composition factor of $\D S$ is a composition factor of $\fN$, (iv) holds.
\end{obs}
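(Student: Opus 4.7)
The plan is to treat the four items separately and use the $\Z[t,t^{-1}]$-basis property of $\{\chgr\fL(\lambda)\}$ throughout.

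For (i), I would exploit additivity of $\chgr$ on short exact sequences, which is a formal consequence of Lemma \ref{le:chgr es morf de anillos} (graded character is a ring homomorphism, hence in particular a group homomorphism on the underlying abelian group $R^{\bullet}$). Pick a composition series of $\fN$ compatible with the grading (which exists because $\D$ is graded finite-dimensional); then $\chgr\fN$ equals the sum of $\chgr$ of its composition factors, each of which is of the form $\chgr\fL(\mu)[j] = t^{j}\chgr\fL(\mu)$. Uniqueness of the expansion in the $\Z[t,t^{-1}]$-basis $\{\chgr\fL(\lambda)\}$ forces the coefficient $a_{\fN,\fL(\lambda),i}$ to count the number of composition factors isomorphic to $\fL(\lambda)[i]$.

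For (ii), since $\D$ is a finite-dimensional Hopf algebra it is Frobenius, so projective and injective objects coincide; hence, whenever $\fL(\lambda)$ is projective, any occurrence of $\fL(\lambda)[i]$ among the composition factors of $\fN$ (shifts of projectives are still projective by Lemma \ref{le:chgr es morf de anillos} type reasoning) yields a direct summand, and (i) gives exactly the multiplicity. For (iii), using (i), $\fL(\lambda)[i]$ is a composition factor of $\fN$; lifting a composition series yields graded submodules $\fN'\supset\fN''$ with $\fN'/\fN''\simeq\fL(\lambda)[i]$. The highest-weight $\lambda$ of $\fL(\lambda)$ sits in degree $0$, hence in $\fL(\lambda)[i]$ it sits in degree $i$; lift it to an element of $\fN'(i)\subset\fN(i)$ and let $S$ be its $\DSn$-span, which is a weight isomorphic to $\lambda$. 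By construction $\D S\subset\fN'$ and $\D S$ maps nontrivially onto the simple $\fL(\lambda)[i]$, so taking $\fX$ to be the kernel of this map (a maximal submodule of $\D S$) gives $\D S/\fX\simeq\fL(\lambda)[i]$.

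For (iv), the key observation, which is essentially the only subtlety, is that $S$ cannot be contained in any proper submodule of $\D S$: if $S\subset\fX$ then $\D S=\D\cdot S\subset\fX$, contradicting maximality of $\fX$. Since $S$ is simple as a $\DSn$-module, $S\cap\fX=0$, so the composition $S\hookrightarrow\D S\twoheadrightarrow\D S/\fX$ is injective. Writing $\D S/\fX\simeq\fL(\mu)[j]$ (a simple graded module), the image of $S$ shows that $\lambda[i]$ occurs as a $\DSn$-subobject of $\fL(\mu)[j]$, i.e.\ $\lambda[i]$ is a weight of $\fL(\mu)[j]$; and since $\D S\subset\fN$ means every composition factor of $\D S$ is one of $\fN$, we get $a_{\fN,\fL(\mu),j}\neq 0$ from (i). The ``in particular'' clause is then immediate: under the hypothesis, the only composition factor of $\fN$ admitting $\lambda[i]$ as a weight is $\fL(\lambda)[i]$ itself, forcing $(\mu,j)=(\lambda,i)$. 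I expect no serious obstacle; the only point requiring care is the $S\not\subset\fX$ argument in (iv), which is what makes the weight $S$ genuinely ``see'' the top quotient rather than getting absorbed into a lower piece of $\D S$.
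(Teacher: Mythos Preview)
Your proposal is correct and follows the same approach as the paper, which merely says that (i)--(iii) are clear (referring to \cite[\S 3.2]{vay-proj}) and that (iv) holds because every composition factor of $\D S$ is a composition factor of $\fN$. You have simply filled in the details the paper leaves implicit, in particular the observation that $S\not\subset\fX$ forces $\lambda[i]$ to appear as a weight of the simple quotient $\D S/\fX$; the only minor quibble is that your appeal to Lemma~\ref{le:chgr es morf de anillos} for ``shifts of projectives are projective'' is misplaced (that lemma concerns $\chgr$), though the claim itself is of course true since the shift functor is an autoequivalence.
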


\subsection{Verma modules} 

Given a highest-weight $\lambda\in\Lambda$, the induced module
\begin{align}\label{eq:verma}
\fM(\lambda)=\D\ot_{\D^{\geq0}}\lambda\simeq\BV(V)\ot\lambda                                                                                                                                                   
\end{align}
is called {\it Verma module}; where the isomorphism is of $\Z$-graded $\D^{\leq0}$-modules. This is the universal highest-weight module of weight $\lambda$. Its head is isomorphic to $\fL(\lambda)$ and its socle is $\fL(\mu)$ with $\overline{\mu}=\BV^{n_{top}}(V)\ot\lambda$ \cite[Theorems 3 and 4]{PV2}.

In this case, the Verma modules are self-dual except $\fM(\tau,1)^*\simeq\fM(\tau,2)$ by \cite[(10)]{vay-proj} since $\lambda_V=\ch\BV^{n_{top}}(V)=\varepsilon$. By \cite[Theorem 6]{PV2}, $\fM(e,-)$, $\fM(\sigma,+)$, $\fM(\tau,1)$ and $\fM(\tau,2)$ are simple and hence they are projective by \cite[Corollary 17]{vay-proj}. Their graded characters are
\begin{align*}
 \chgr\fM(\varepsilon)&=(1+t^{-4})\chgr\fL(\varepsilon)+t^{-1}\chgr\fL(\sigma,-),\\
 \noalign{\smallskip}
 \chgr\fM(e,\rho)&=\chgr\fL(e,\rho)+t^{-1}\chgr\fL(\sigma,-)+t^{-2}\chgr\fL(\tau,0),\\
 \noalign{\smallskip}
  \chgr\fM(\tau,0)&=\chgr\fL(\tau,0)+t^{-1}\chgr\fL(\sigma,-)+t^{-2}\chgr\fL(e,\rho),\\
   \noalign{\smallskip}
   \chgr\fM(\sigma,-)&=(1+t^{-2})\chgr\fL(\sigma,-)+\\
 &\qquad\qquad+t^{-1}\chgr\fL(e,\rho)+t^{-1}\chgr\fL(\tau,0)+(t^{-1}+t^{-3})\chgr\fL(\varepsilon),\\
 \noalign{\smallskip}
 \chgr\fM(\lambda)&=\chgr\fL(\lambda),\quad\forall\,\lambda\in\Lsp.
\end{align*}
In fact, we can calculate explicitly the polynomials $p_{\fM(\mu),\fL(\lambda)}$ or use \cite[Theorems 7, 8, 9 and 10]{PV2} where 
we have computed the lattice of submodules of the Verma modules.

\subsection{co-Verma modules} 

Given a lowest-weight $\mu$, the induced module
\begin{align}\label{eq:co verma}
\fW(\mu)=\D\ot_{\D^{\leq0}}\mu\simeq\BV(\oV)\ot\mu.                                                                                                                                                    
\end{align}
is called {\it co-Verma module}; the isomorphism is of $\Z$-graded $\D^{\geq0}$-modules. By \cite[Theorem 10]{vay-proj} we know that $\chgr\fW(\lambda)=t^4\chgr\fM(\lambda)$. Hence
\begin{align*}
 \chgr\fW(\varepsilon)&=(1+t^{4})\chgr\fL(\varepsilon)+t^3\chgr\fL(\sigma,-),\\
 \noalign{\smallskip}
 \chgr\fW(e,\rho)&=t^2\chgr\fL(\tau,0)+t^3\chgr\fL(\sigma,-)+t^{4}\chgr\fL(e,\rho),\\
 \noalign{\smallskip}
  \chgr\fW(\tau,0)&=t^2\chgr\fL(e,\rho)+t^{3}\chgr\fL(\sigma,-)+t^{4}\chgr\fL(\tau,0),\\
 \noalign{\smallskip}
 \chgr\fW(\sigma,-)&=(t^2+t^{4})\chgr\fL(\sigma,-)+\\
 &\qquad\qquad+t^{3}\chgr\fL(\tau,0)+t^{3}\chgr\fL(e,\rho)+(t+t^{3})\chgr\fL(\varepsilon),\\
 \noalign{\smallskip}
 \chgr\fW(\lambda)&=t^4\chgr\fL(\lambda),\quad\forall\,\lambda\in\Lsp.
\end{align*}

As the Verma modules, the co-Verma modules have simple head and simple socle.

\begin{lema}\phantomsection\label{le:coVerma}
\begin{enumerate}[label=(\roman*)]
 \item The socle of $\fW(\lambda)$ is isomorphic to $\fL(\lambda)$ for all $\lambda\notin\Lsp$.
 \smallskip
 \item The head of $\fW(\lambda)$ is isomorphic to $\fL(\overline{\lambda})$ for all $\lambda\notin\Lsp$.
 \smallskip
 \item The socle of $\fW(\lambda)/\soco\fW(\lambda)$ is isomorphic to $\fL(\sigma,-)$ if $(\sigma,-)\neq\lambda\notin\Lsp$.
 \smallskip
 \item The socle of $\fW(\sigma,-)/\soco\fW(\sigma,-)$ is isomorphic to $2\fL(\varepsilon)\oplus\fL(e,\rho)\oplus\fL(\tau,0)$.
  \item The unique maximal submodule of $\fW(\lambda)$ is the preimage of the socle of $\fW(\lambda)/\soco\fW(\lambda)$ for all $\lambda\notin\Lsp$.
 \smallskip
 \item 
 $\fW(\lambda)\simeq\fM(\lambda)$ for all $\lambda\in\Lsp$. In particular, they are simple and projective.
\end{enumerate}
\end{lema}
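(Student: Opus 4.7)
My plan is to establish the co-Verma structure by combining Frobenius reciprocity with a duality identification $\fW(\lambda)^*\simeq\fM(\overline{\lambda})$ that transports the known Verma structure from \cite[Theorems 7--10]{PV2}.

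Part (vi) is immediate from characters: for $\lambda\in\Lsp$, the formula $\chgr\fW(\lambda)=t^4\chgr\fL(\lambda)$ is already the expansion of $\chgr\fW(\lambda)$ in the $\Z[t,t^{-1}]$-basis $\{\chgr\fL(\mu)\}$, with a single nonzero term. By Remark \ref{obs:sobre los subcocientes}(i), $\fW(\lambda)$ admits a unique composition factor $\fL(\lambda)[4]$, hence is simple; since $\fM(\lambda)\simeq\fL(\lambda)$ in this case, $\fW(\lambda)\simeq\fM(\lambda)$ as ungraded $\D$-modules, and both are projective by \cite[Corollary 17]{vay-proj}.

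Fix now $\lambda\notin\Lsp$. Frobenius reciprocity for $\D^{\leq0}\subset\D$ gives $\Hom_\D(\fW(\lambda),\fL(\mu))=\Hom_{\D^{\leq0}}(\lambda,\fL(\mu))$, and since $\lambda$ is a simple $\D^{\leq0}$-module on which $V$ acts trivially, the latter equals $\Hom_{\DSn}(\lambda,\{v\in\fL(\mu):V\cdot v=0\})$. The $V$-kernel of the simple $\D$-module $\fL(\mu)$ coincides with its lowest-weight space $\overline{\mu}$ by \cite[Theorem 4]{PV2} (using that in a bosonization of a connected Nichols algebra, the simple $\D^{\leq0}$-submodules of a $\D$-module are exactly the $V$-trivial $\DSn$-simples). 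Thus $\Hom_\D(\fW(\lambda),\fL(\mu))=\delta_{\lambda,\overline{\mu}}\,\ku$, proving (ii): the head of $\fW(\lambda)$ is $\fL(\overline{\lambda})$. For (i), I plan to establish the graded isomorphism $\fW(\lambda)^*\simeq\fM(\overline{\lambda})$. On the character side, the identity $\overline{\chgr\fM(\lambda)}=t^4\chgr\fM(\overline{\lambda})$ (a direct calculation from \cite[(10)]{vay-proj} using $\lambda_V=\varepsilon$ and the involution $\lambda^*=\overline{\lambda}$ on $\Lambda\setminus\Lsp$) combined with $\chgr\fW(\lambda)=t^4\chgr\fM(\lambda)$ yields $\chgr\fW(\lambda)^*=\chgr\fM(\overline{\lambda})$. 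On the module side, a nonzero morphism $\fM(\overline{\lambda})\to\fW(\lambda)^*$ arises from the universal property of Verma modules applied to a $\DSn$-copy of $\overline{\lambda}$ at the top degree of $\fW(\lambda)^*$ (killed by $\oV$ for degree reasons); since $\dim\fM(\overline{\lambda})=\dim\fW(\lambda)^*$ by the matching characters, the map is an isomorphism. Dualizing gives $\soco\fW(\lambda)=\fL(\overline{\lambda})^*\simeq\fL(\lambda)$.

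Granting the duality, parts (iii)--(v) are transcriptions of the Loewy structure of Verma modules from \cite[Theorems 7--10]{PV2}. The socle of $\fW(\lambda)/\soco\fW(\lambda)$ is dual to the head of $\operatorname{rad}\fM(\overline{\lambda})$, which \cite{PV2} identifies as $\fL(\sigma,-)$ when $\lambda\neq(\sigma,-)$ and as $2\fL(\varepsilon)\oplus\fL(e,\rho)\oplus\fL(\tau,0)$ when $\lambda=(\sigma,-)$; these descriptions are preserved under $^*$ since $\fL(\varepsilon)$ and $\fL(\sigma,-)$ are self-dual while $\fL(e,\rho)^*\simeq\fL(\tau,0)$ swaps the last two summands within the direct sum, yielding (iii) and (iv). Part (v) is the dualization of the fact that $\fM(\overline{\lambda})$ has a unique minimal submodule, namely its socle. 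The principal obstacle is promoting the character identity $\chgr\fW(\lambda)^*=\chgr\fM(\overline{\lambda})$ to a module isomorphism --- this is not automatic, since for $\lambda\notin\Lsp$ the modules $\fW(\lambda)$ and $\fM(\lambda)[4]$ share the same graded character yet have non-isomorphic heads --- and verifying that the canonical Verma-universal map is nonzero; both may require a direct check against the action formulas in the appendix.
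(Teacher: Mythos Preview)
Your Frobenius-reciprocity computation of the head in part (ii) is correct and is a clean alternative to the paper's citation of \cite[(15)]{vay-proj}. However, the heart of your plan for (i) and (iii)--(v), the duality $\fW(\lambda)^*\simeq\fM(\overline{\lambda})$, is false, and this cannot be repaired.

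Concretely, your supporting claim ``$\lambda^*=\overline{\lambda}$ on $\Lambda\setminus\Lsp$'' fails: as $\DSn$-modules one has $(e,\rho)^*=(e,\rho)$ (it is the standard $2$-dimensional representation of $\Sn_3$ concentrated in $G$-degree $e$, hence self-dual), while $\overline{(e,\rho)}=(\tau,0)$. Consequently your character identity $\overline{\chgr\fM(\lambda)}=t^4\chgr\fM(\overline{\lambda})$ is wrong for $\lambda\in\{(e,\rho),(\tau,0)\}$; a direct computation gives $\chgr\fW(e,\rho)^*=\chgr\fM(e,\rho)$, which differs from $\chgr\fM(\tau,0)$ already in the top component ($(e,\rho)$ versus $(\tau,0)$). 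Your attempted construction of the map also breaks: the top-degree component of $\fW(\lambda)^*$ is $\lambda^*=\lambda$, not $\overline{\lambda}$, so there is no highest weight $\overline{\lambda}$ available to trigger the universal property of $\fM(\overline{\lambda})$. In fact $\fW(\lambda)^*$ is \emph{not} isomorphic to any Verma module for $\lambda\notin\Lsp$: it shares the graded character of $\fM(\lambda)$, but by (ii) its head is $\fL(\lambda)^*=\fL(\overline{\lambda})$, whereas the head of $\fM(\lambda)$ is $\fL(\lambda)$, and these differ for $\lambda=(e,\rho)$. You already flagged this obstruction in your final paragraph; it is fatal. Since (iii)--(v) in your plan rest on this duality, they fall with it.

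The paper proceeds quite differently. Parts (i), (ii), (vi) are imported from \cite[(15)]{vay-proj}; the relevant input there is the Frobenius property of the extension $\D^{\leq0}\subset\D$ with trivial Nakayama twist (because $\lambda_V=\varepsilon$), which identifies $\fW(\lambda)$ with a coinduced module and yields $\Hom_\D(\fL(\mu),\fW(\lambda))=\Hom_{\DSn}(\mu,\lambda)$ --- this is the step you are missing for (i). For (iii) the paper simply observes that $\fW(\lambda)$ has exactly three composition factors, so once head and socle are known the middle layer is forced to be $\fL(\sigma,-)$. For (iv) the paper argues directly inside $\fW(\sigma,-)/\soco\fW(\sigma,-)$: each weight $\lambda\in\{\varepsilon,(e,\rho),(\tau,0)\}$ of degree $1$ is a lowest weight there, so $\D\lambda$ is a quotient of $\fW(\lambda)$ and (iii) pins down its socle; the remaining copy of $\varepsilon$ in degree $3$ is a highest weight, and a short argument using the submodule lattice of $\fM(\varepsilon)$ shows it generates only $\fL(\varepsilon)$.
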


\begin{proof}
(i), (ii) and (vi) follow from \cite[(15)]{vay-proj}.

(iii) If $(\sigma,-)\neq\lambda\notin\Lsp$, then $\fW(\lambda)$ has three composition factors because of the graded character. Then, by (i) and (ii), the socle of $\fW(\lambda)/\soco\fW(\lambda)$ is simple and isomorphic to $\fL(\sigma,-)$. 

Notice that (v) follows from (iii) for $\lambda\neq(\sigma,-)$.

(iv) Let $\lambda\in\{\varepsilon,(e,\rho),(\tau,0)\}$. Then $\lambda$ is contained in the maximal submodule of $\fW(\sigma,-)$ because of $\chgr\fL(\sigma,-)$, which is the head of $\fW(\sigma,-)$ by (ii). In particular, if the degree of $\lambda$ is $1$, then $\lambda$ is a lowest-weight. Hence the submodule $\D\lambda$ of $\fW(\sigma,-)$ is a quotient of the co-Verma module $\fW(\lambda)$. Therefore the socle of $\fW(\sigma,-)/\soco\fW(\sigma,-)$ contains a submodule isomorphic to $\fL(\varepsilon)\oplus\fL(e,\rho)\oplus\fL(\tau,0)$ by (i)--(iii). 

The other copy of $\fL(\varepsilon)$ corresponds to the weight $\varepsilon$ of degree $3$, see $\chgr\fW(\sigma,-)$. In fact, it is a highest-weight in $\fW(\sigma,-)/\soco\fW(\sigma,-)$, since this quotient has no homogeneous component of degree $4$. Then the submodule $\D\varepsilon$ of $\fW(\sigma,-)/\soco\fW(\sigma,-)$ is a quotient of the Verma module $\fM(\varepsilon)$. We claim that $\D\varepsilon\simeq\fL(\varepsilon)$ and (iv) follows. Otherwise, $\D\varepsilon$ has a composition factor isomorphic to $\fL(\sigma,-)$ by \cite[Theorem 8]{PV2}. From $\chgr\fW(\sigma,-)$ we deduce that this composition factor corresponds to the head of $\fW(\sigma,-)$. But this is not possible because $\varepsilon$ is contained in the maximal submodule of $\fW(\sigma,-)$. This finishes the proof of (iv) which implies (v).
\end{proof}

\section{Extensions of simple modules}\label{sec:ext simple modules}

In this section, we classify the extensions of $\fL(\lambda)$ by $\fL(\mu)$ for $\lambda,\mu\in\Lambda\setminus\Lambda_{sp}$, {\it i.e.} the modules $\fE$ which fits into a short exact sequence of the form
\begin{align}\label{eq:extension}
0\longrightarrow \fL(\mu)\overset{i}{\longrightarrow}\fE\overset{\pi}{\longrightarrow} \fL(\lambda)\longrightarrow0.
\end{align}
We say that the extension is trivial if $\fE\simeq\fL(\mu)\oplus\fL(\lambda)$. If $\lambda\in\Lambda_{sp}$ or $\mu\in\Lambda_{sp}$, then $\fE$ is trivial because $\fL(\lambda)$ is injective (resp. $\fL(\mu)$ is projective).

\begin{lema}\label{le:trivial extensions}
If $\lambda,\mu\in\{\varepsilon,(e,\rho),(\tau,0)\}$ or $\lambda=\mu=(\sigma,-)$, then $\fE\simeq\fL(\mu)\oplus\fL(\lambda)$.
\end{lema}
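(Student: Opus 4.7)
The approach is to exploit the graded structure of $\D$ together with the $\DSn$-linearity of the action maps (Remark \ref{obs:action maps}). By \cite{MR659212}, $\fE$ may be taken to be graded, and the embedding $i$ then identifies $\fL(\mu)[k]$ with a graded submodule of $\fE$ for some shift $k\in\Z$. Semisimplicity of $\DSn$ provides a graded $\DSn$-linear section of $\pi$, giving an isomorphism $\fE\simeq\fL(\mu)[k]\oplus\fL(\lambda)$ of graded $\DSn$-modules. The $\D$-action on $\fE$ is then determined by the intrinsic actions on the two summands plus two off-diagonal, graded, $\DSn$-linear maps
$$
\phi_V\colon V\ot\fL(\lambda)\to\fL(\mu)[k],\qquad \phi_{\oV}\colon\oV\ot\fL(\lambda)\to\fL(\mu)[k]
$$
of degrees $-1$ and $+1$ respectively. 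The extension \eqref{eq:extension} splits if and only if $(\phi_V,\phi_{\oV})$ is the coboundary of a graded $\DSn$-morphism $\psi\colon\fL(\lambda)\to\fL(\mu)[k]$.

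Since $V\simeq\oV\simeq(\sigma,-)$ as $\DSn$-modules and $(\sigma,-)$ is self-dual, Frobenius reciprocity yields
$$
\Hom_{\DSn}\bigl((\sigma,-)\ot S,\,T\bigr)\neq0\iff T\text{ appears in }(\sigma,-)\cdot S,
$$
for any $S,T\in\Lambda$. The plan is therefore: for each pair $(\lambda,\mu)$ in the statement, enumerate the admissible graded shifts $k$ (those for which $\chgr\fL(\lambda)+t^k\chgr\fL(\mu)$ can be realized as the character of a graded $\D$-module), identify via the fusion rules of \cite[\S5.2.4]{PV2} the potential non-zero components of $\phi_V$ and $\phi_{\oV}$ in each degree, and exhibit in each surviving case an explicit $\psi$ whose coboundary absorbs them. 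The rigidity coming from property \ref{property:graded subalgebras}---namely that $\phi_V$ and $\phi_{\oV}$ must be compatible with the quadratic relations of $\BV(V)$ and $\BV(\oV)$---prunes the list of candidates significantly.

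For $\lambda,\mu\in\{\varepsilon,(e,\rho),(\tau,0)\}$, every weight of $\fL(\lambda)$ and $\fL(\mu)$ lies in $\{\varepsilon,(e,\rho),(\tau,0),(\sigma,+)\}$, so the only possible matches arise through the $(\sigma,+)$-components and through fusions like $(\sigma,-)\cdot(\sigma,+)$; in each admissible shift $k$ a short Hom-computation shows that the surviving candidates are coboundaries. For $\lambda=\mu=(\sigma,-)$ the weights appearing are $(\sigma,-)$ in degrees $0$ and $-2$ together with $(\tau,1)+(\tau,2)$ in degree $-1$, and the analogous bookkeeping employs the decompositions of $(\sigma,-)\cdot(\sigma,-)$ and $(\sigma,-)\cdot(\tau,i)$. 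The principal difficulty is combinatorial rather than conceptual: each individual check is elementary, but one must keep track of all admissible shifts $k$ and explicitly write down a $\psi$ in each surviving case. The Nichols-algebra compatibility together with Frobenius reciprocity provides the cleanest shortcut, reducing the verification to a handful of cases.
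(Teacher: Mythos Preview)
Your outline sets up a legitimate cohomological framework, but it is not a proof: every substantive step is deferred to phrases like ``a short Hom-computation shows'' and ``analogous bookkeeping'', and those deferred steps are exactly where the content lies. Concretely, take $\lambda=\mu=(e,\rho)$ with shift $k=2$. Then $\fL(\mu)[2]$ has $(\sigma,+)$ in degree~$1$, and since $(\sigma,+)$ occurs in $(\sigma,-)\cdot(e,\rho)$ the component $\phi_{\oV}\colon\oV\otimes(e,\rho)\to(\sigma,+)$ in degree~$0$ is a priori nonzero. On the other hand, any graded $\DSn$-map $\psi\colon\fL(e,\rho)\to\fL(e,\rho)[2]$ vanishes (the weights do not line up in any degree), so there are \emph{no} nonzero coboundaries. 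Your promised ``explicit $\psi$ whose coboundary absorbs'' such a $\phi_{\oV}$ therefore does not exist; you must instead show $\phi_{\oV}=0$ using the cocycle condition (the relations of $\D$ among $V$, $\oV$ and $\DSn$), and you never carry this out. The same issue arises for $\lambda=\mu=(\sigma,-)$ with $k=2$: the space of candidate $\phi_{\oV}$'s is two-dimensional while the coboundaries form at most a line, so again the relations of $\D$ must be invoked explicitly.

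The paper's argument avoids this cocycle analysis entirely. After reducing to the graded case as you do, it shows directly---using only the fusion rules and the graded characters of $\fL(\mu)$---that either $\iota(\lambda)$ is a highest weight or $\iota(\overline\lambda)$ is a lowest weight in $\fE$. The submodule it generates is then a quotient of $\fM(\lambda)$ or of $\fW(\overline\lambda)$, and the already-computed submodule lattices of these (from \cite[\S4]{PV2} and Lemma~\ref{le:coVerma}) force that quotient to be $\fL(\lambda)$, splitting the sequence. This bypasses the need to identify cocycles or coboundaries degree by degree, replacing it with one appeal to the Verma/co-Verma structure.
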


\begin{proof}
Since $\D$ is finite-dimensional, the space of extensions and the space of graded extensions are isomorphic, see for instance \cite[Corollary 2.4.7]{MR2046303}. Thus, we can assume that $\fE$ fits into a short exact sequence of the form
\begin{align*}
0\longrightarrow \fL(\mu)[\ell]\overset{i}{\longrightarrow}\fE\overset{\pi}{\longrightarrow} \fL(\lambda)\longrightarrow0.
\end{align*}
In particular, $\fE\simeq\fL(\mu)[\ell]\oplus\fL(\lambda)$ as graded $\DSn$-modules. Let $\iota$ be a section of $\pi$ as graded $\DSn$-modules.

We first show case-by-case that either $\iota(\lambda)$ is a highest-weight of $\fE$ or $\iota(\overline{\lambda})$ is a lowest-weight of $\fE$; recall that $\overline{\lambda}$ denotes the lowest-weight of $\fL(\lambda)$. Recall that the restrictions of the action maps $\oV\ot\iota(\lambda)\longrightarrow\fL(\mu)(1-\ell)$ and $V\ot\iota(\overline{\lambda})\longrightarrow\fL(\mu)(-1-\ell)$ are morphisms of graded $\DSn$-modules (Remark \ref{obs:action maps}) and keep in mind the character of $\fL(\mu)$ and the fusion rules for the simple $\DSn$-modules given in \cite[\S 2.5.4]{PV2}.

If $\lambda=\varepsilon$, then $\oV\cdot\iota(\lambda)=0=V\cdot\iota(\overline{\lambda})$ because $(\sigma,-)$ is not a weight of $\fL(\mu)$.

If $\lambda=(e,\rho)$ and $\oV\cdot\iota(\lambda)\neq0$, then $\mu\neq\varepsilon$ and $\oV\cdot\iota(\lambda)\simeq(\sigma,+)\simeq\fL(\mu)(-1)$. This forces $\ell=2$. Hence $V\cdot\iota(\overline{\lambda})=0$ because $\fL(\mu)[-2](-3)=\fL(\mu)(-5)=0$. The case $\lambda=(\tau,0)$ is analogous.

If $\lambda=(\sigma,-)$ and $\oV\cdot\iota(\lambda)\neq0$, then $\oV\cdot\iota(\lambda)\subseteq\fL(\sigma,-)(-1)$ and we can conclude that $V\cdot\iota(\overline{\lambda})=0$ as above.

Now, we have that the submodule $\fN$ generated by $\iota(\lambda)$ is a graded quotient of either $\fM(\lambda)$ or $\fW(\overline{\lambda})$. We know the graded quotients of $\fM(\lambda)$ and $\fW(\overline{\lambda})$ from \cite[\S 4]{PV2} and Lemma \ref{le:coVerma}. By the graded characters of these quotients, we deduce that $\fN\simeq\fL(\lambda)$ and hence the lemma follows.
\end{proof}

For $\lambda\in\{\e,(e,\rho),(\tau,0)\}$, we have distinguished extensions thanks to \cite[Theorems 9 and 10]{PV2} and Lemma \ref{le:coVerma}. Namely,
\begin{align*}
\quad&0\longrightarrow \fL(\sigma,-)[-1]\longrightarrow\fM(\lambda)/\soco\fM(\lambda)\longrightarrow\fL(\lambda)\longrightarrow0\quad\mbox{and}\\
\noalign{\smallskip}
\quad&0\longrightarrow \fL(\sigma,-)[3]\longrightarrow\fW(\overline{\lambda})/\soco\fW(\overline{\lambda})\longrightarrow\fL(\lambda)[2]\longrightarrow0.
\end{align*}

\begin{definition}\label{def:Est}
Let $s,t$ be scalars and $\lambda\in\{(e,\rho),(\tau,0)\}$. We set $\fE(\lambda)_{0,0}=\fL(\sigma,-)\oplus\fL(\lambda)$ for $s=t=0$. For non-zero scalars, the Baer sum of the above extensions will be denoted by
$$
\fE(\lambda)_{s,t}=s\,\bigl(\fM(\lambda)/\soco\fM(\lambda)\bigr)+t\,\bigl(\fW(\overline{\lambda})/\soco\fW(\overline{\lambda})\bigr).
$$
\end{definition}

Therefore $\fE(\lambda)_{s,t}$ is an extension of $\fL(\lambda)$ by $\fL(\sigma,-)$ and its dual $\fE(\lambda)_{s,t}^*$ is an extension of $\fL(\sigma,-)$ by $\fL(\overline{\lambda})$.

\begin{lema}\label{le:notrivial extension}
Let $\fE$ be an extension of $\fL(\lambda)$ by $\fL(\mu)$.
\begin{enumerate}[label=(\roman*)]
 \item If $\lambda\in\{(e,\rho),(\tau,0)\}$ and $\mu=(\sigma,-)$, then $\fE\simeq\fE(\lambda)_{s,t}$ for some $s,t\in\ku$.
Moreover, it is a graded extension if and only if it is isomorphic (up to shifts) to either $\fE(\lambda)_{1,0}$, $\fE(\lambda)_{0,1}$ or $\fE(\lambda)_{0,0}$.
 \smallskip
 
 \item If $\lambda=(\sigma,-)$ and $\mu\in\{(e,\rho),(\tau,0)\}$, then $\fE\simeq\fE(\lambda)_{s,t}^*$ for some $s,t\in\ku$.
Moreover, it is a graded extension if and only if it is isomorphic (up to shifts) to either $\fE(\lambda)_{1,0}^*$, $\fE(\lambda)_{0,1}^*$ or $\fE(\lambda)_{0,0}^*$.
\end{enumerate}
\end{lema}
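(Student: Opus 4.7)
The plan is to reduce to graded extensions, then for each possible shift $\ell$ determine when a non-trivial extension can exist, and finally package the result via Baer sums.

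Since $\D$ is finite-dimensional and graded, as in the proof of Lemma \ref{le:trivial extensions} one has $\Ext^1(\fL(\lambda),\fL(\mu))\simeq\bigoplus_{\ell\in\Z}\Ext^1_{gr}(\fL(\lambda),\fL(\mu)[\ell])$, so it is enough to handle graded extensions $0\to\fL(\sigma,-)[\ell]\to\fE\to\fL(\lambda)\to0$. Choose a graded $\DSn$-section $\iota$ of $\pi$ (possible since $\DSn$ is semisimple); the obstruction to $\iota$ being $\D$-linear is a $\DSn$-equivariant cocycle encoded by the action maps $\oV\ot\iota(S)\to\fL(\sigma,-)[\ell]$ and $V\ot\iota(S)\to\fL(\sigma,-)[\ell]$ in the appropriate degrees, as $S$ ranges over the graded weights of $\fL(\lambda)$; recall Remark \ref{obs:action maps}. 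Since $V\simeq\oV$ has $\DSn$-character $(\sigma,-)$, these Hom spaces reduce to isotypic multiplicities of $(\sigma,-)\cdot T$ in $\fL(\sigma,-)[\ell]$.

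The supports of $\fL(\lambda)$ and $\fL(\sigma,-)$ both equal $\{-2,-1,0\}$, so whenever $|\ell|\geq3$ the sequence splits by degree reasons. For the remaining $\ell\in\{-2,\ldots,2\}$ I expand, for each weight $T$ of $\fL(\lambda)$, the fusion product $(\sigma,-)\cdot T$ using the rules of \cite[\S 5.2.4]{PV2} and compare it with the graded character of $\fL(\sigma,-)[\ell]$ in the matching degrees. A case-by-case inspection shows that, modulo coboundaries, the only shifts at which a non-zero class survives are $\ell=-1$, where $\oV\cdot\iota(\lambda)$ can land in the top copy of $(\sigma,-)$ of $\fL(\sigma,-)[-1]$, and $\ell=1$, where $V\cdot\iota(\overline{\lambda})$ can land in the bottom copy of $(\sigma,-)$ of $\fL(\sigma,-)[1]$. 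In these two cases the cocycle space is one-dimensional; for all other shifts the argument of Lemma \ref{le:trivial extensions} forces $\fE$ to split.

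At $\ell=-1$ the quotient $\fM(\lambda)/\soco\fM(\lambda)$ represents a non-zero class (its lattice structure is described in \cite[Theorems 9 and 10]{PV2}) and hence spans $\Ext^1_{gr}(\fL(\lambda),\fL(\sigma,-)[-1])$, so every non-split graded extension at this shift is isomorphic to $\fE(\lambda)_{1,0}$. Symmetrically, at $\ell=1$ the shifted co-Verma quotient $\fW(\overline{\lambda})/\soco\fW(\overline{\lambda})[-2]$ (non-split by Lemma \ref{le:coVerma}) spans the corresponding $\Ext$-space and yields $\fE(\lambda)_{0,1}$; the split case gives $\fE(\lambda)_{0,0}$. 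Therefore $\Ext^1(\fL(\lambda),\fL(\sigma,-))$ is two-dimensional and every extension is a Baer sum $\fE(\lambda)_{s,t}$ for unique $s,t\in\ku$, proving (i). Part (ii) follows by dualising the sequence \eqref{eq:extension}: using the self-duality of $\fL(\sigma,-)$ and $\fL(\mu)^*\simeq\fL(\overline{\mu})$, the dualised extension is of $\fL(\overline{\mu})$ by $\fL(\sigma,-)$, which by (i) has the form $\fE(\overline{\mu})_{s,t}$; dualising back gives $\fE\simeq\fE(\overline{\mu})_{s,t}^*$ (the $\fE(\lambda)_{s,t}^*$ of the statement). The main obstacle will be the case analysis ruling out the intermediate shifts $\ell\in\{-2,0,2\}$, for which the $\DSn$-decomposition of $(\sigma,-)\cdot T$ must be matched against $\fL(\sigma,-)[\ell]$ in several degrees and every resulting cocycle shown to be a coboundary; this is routine but tedious.
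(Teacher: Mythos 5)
Your overall plan — reduce to graded extensions, identify the admissible shifts by comparing supports and fusion rules, then match the surviving classes with the Verma/co-Verma quotients and their Baer sums — is a reasonable route, but as written it contains concrete errors and a genuine gap, and it diverges from the paper's cleaner dichotomy.

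First, the degree-support bound is wrong. With $\fL(\lambda)$ in shift $0$ (support $\{0,-1,-2\}$) and $\fL(\sigma,-)[\ell]$ (support $\{\ell,\ell-1,\ell-2\}$), the action of $V$ or $\oV$ connects two homogeneous pieces whenever the shifted supports are \emph{adjacent} (differ by $\pm 1$), not only when they overlap. This admits $\ell\in\{-3,\dots,3\}$, so only $|\ell|\geq4$ splits "by degree reasons." At $\ell=\pm3$ the single boundary connection $\oV\colon\lambda\to(\sigma,-)$ (resp.\ $V\colon\overline{\lambda}\to(\sigma,-)$) is a non-zero $\DSn$-morphism since $(\sigma,-)\cdot(e,\rho)=(\sigma,-)+(\sigma,+)$, and there are no graded coboundaries available (disjoint supports), so ruling these shifts out requires more than a dimension count — in the paper it is done by observing that the module would then be a lowest-weight (resp.\ highest-weight) module, hence a quotient of a co-Verma (resp.\ Verma) module, whose graded character forbids that placement of $\fL(\sigma,-)$.

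Second, you have $V$ and $\oV$ interchanged at the two surviving shifts. At $\ell=-1$ (the Verma-quotient case) the highest weight satisfies $\oV\cdot\iota(\lambda)=0$ automatically for degree reasons, and the non-trivial connection is $V\cdot\iota(\lambda)$ landing in $\fL(\sigma,-)[-1](-1)$; at $\ell=1$ (the co-Verma case) it is $\oV\cdot\iota(\overline{\lambda})$ landing in $\fL(\sigma,-)[1](-1)$, not $V\cdot\iota(\overline{\lambda})$ (which would sit in degree $-3$, where $\fL(\sigma,-)[1]$ vanishes).

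Third, the pivotal step — "a case-by-case inspection shows that, modulo coboundaries, the only shifts at which a non-zero class survives are $\ell=\pm1$" — is asserted, not proved; with the corrected bound you would have seven shifts to check. The paper avoids this entirely by a two-case dichotomy: fix $\iota$ and ask whether $\oV\cdot\iota(\lambda)=0$. If yes, $\iota(\lambda)$ is a highest weight in the non-split $\fE$, which is therefore generated by $\iota(\lambda)$, hence a quotient of $\fM(\lambda)$ and, by its graded character, isomorphic to $\fM(\lambda)/\soco\fM(\lambda)$. If no, the $\DSn$-morphism $\oV\ot\lambda\to\fL(\sigma,-)(\cdot)$ forces the shift to be one of two values, at both of which $V\cdot\iota(\overline{\lambda})$ lies in a vanishing degree, so $\iota(\overline{\lambda})$ is a lowest weight and $\fE$ is a quotient of $\fW(\overline{\lambda})$; the graded character then pins down the single admissible shift. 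This argument needs no exhaustive shift enumeration and subsumes your $\ell=\pm3$ cases as a by-product. If you wish to keep a shift-by-shift approach, you must correct the bound to $|\ell|\geq4$, fix the roles of $V$ and $\oV$, and actually perform the elimination at the intermediate shifts (at least one of which is non-trivial to rule out).
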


\begin{proof}
(i) We prove only the case $\lambda=(e,\rho)$. The case $\lambda=(\tau,0)$ is similar. As in the above lemma, it is enough to prove that if $\fE$ is a nontrivial graded extension, then $\fE\simeq\fE(\lambda)_{1,0}$ or $\fE\simeq\fE(\lambda)_{0,1}$. Thus, we can assume that $\fE$ fits into a short exact sequence of the form
\begin{align*}
0\longrightarrow \fL(\sigma,-)[\ell]\overset{i}{\longrightarrow}\fE\overset{\pi}{\longrightarrow} \fL(e,\rho)[2]\longrightarrow0.
\end{align*}
In particular, $\fE\simeq\fL(\sigma,-)[\ell]\oplus\fL(e,\rho)[2]$ as graded $\DSn$-modules. Let $\iota$ be a section of $\pi$ as graded $\DSn$-modules. The action $\oV\ot\iota(e,\rho)\longrightarrow\fL(\sigma,-)(3-\ell)$ is a morphism of graded $\DSn$-modules. If $\iota(e,\rho)$ is a highest-weight in $\fE$, then $\fE\simeq\fE(e,\rho)_{1,0}$. Otherwise, $\oV\cdot\iota(e,\rho)\simeq(\sigma,-)$ is homogeneous. Then $\ell=3$ or $\ell=5$ by $\chgr\fL(\sigma,-)$. On the other hand, $V\cdot\iota(\tau,0)\subseteq\fL(\sigma,-)(-1-\ell)$. Thus, $\iota(\tau,0)$ is a lowest-weight of $\fE$ and hence $\fE\simeq\fE(e,\rho)_{0,1}$. Moreover, this forces that $\ell=3$ because $\chgr\fE(e,\rho)_{0,1}=t^2\chgr\fL(e,\rho)+t^3\chgr\fL(\sigma,-)$.

(ii) is equivalent to (i) because $(\sigma,-)^*=(\sigma,-)$ and $\mu^*\in\{(e,\rho),(\tau,0)\}$.
\end{proof}

In \cite[Lemma 26]{PV2} we found a family of submodules of $\fM(\sigma,-)$ which are extensions of $\fL(\e)$ by $\fL(\sigma,-)$. Among these, $\fT_{0,1}$ is graded but is not neither a highest-weight module nor a lowest-weight module. We next give the actions of $\DSn$, $\xij{12}$ and $\yij{12}$ over it, see the proof of \cite[Lemma 26]{PV2}. By \S\ref{subsec:lo basico} \ref{item:triangular decomposition}, its graded $\D$-structure is completely determined by this datum.

\begin{definition}
As graded $\DSn$-module, $\fT_{0,1}$ is isomorphic to $\fL(\sigma,-)[1]\oplus \e$. We let $\fL(\sigma,-)[1]$ be a graded $\D$-submodule of $\fT_{0,1}$ with basis $\{c_i\}_{i=1}^{10}$ as in the appendix. The generator of the graded $\DSn$-submodule $\e$ of $\fT$ is denoted by $\ft_{0,1}$. The elements $\xij{12}$ and $\yij{12}$ act over $\ft_{0,1}$ as follows
\begin{align}\label{eq:action on ft01}
x_{(12)}\cdot \ft_{0,1}=c_{1}\quad\mbox{and}\quad y_{(12)}\cdot \ft_{0,1}=c_{8}. 
\end{align}
\end{definition}

Hence $V\cdot\e$ is the lowest-weight of $\fL(\sigma,-)[1]$ and $\oV\cdot\e$ is the highest-weight of $\fL(\sigma,-)[1]$ (cf. Appendix) and therefore $\fT_{0,1}/\fL(\sigma,-)[1]\simeq\fL(\e)$ as graded modules.

\begin{definition}\label{def:ext de varepsilon}
Let $s,t,u$ be scalars. We set $\fE(\e)_{0,0,0}=\fL(\sigma,-)\oplus\fL(\e)$ for $s=t=u=0$. For non-zero scalars, we denote by $\fE(\e)_{s,t,u}$ the Baer sum of extensions
\begin{align*}
\fE(\e)_{s,t,u}=s\,\bigl(\fM(\e)/\soco\fM(\e)\bigr)+t\,\bigl(\fW(\e)/\soco\fW(\e)\bigr)\,+u\,\fT_{0,1}.
\end{align*}
\end{definition}

Therefore $\fE(\e)_{s,t,u}$ is an extension of $\fL(\e)$ by $\fL(\sigma,-)$ and its dual $\fE(\e)_{s,t,u}^*$ is an extension of $\fL(\sigma,-)$ by $\fL(\e)$.

\begin{lema}\label{le:notrivial extension varepsilon}
\begin{enumerate}[label=(\roman*)]
 \item Let $\fE$ be an extension of $\fL(\e)$ by $\fL(\sigma,-)$, then $\fE\simeq\fE(\e)_{s,t,u}$ for some $s,t,u\in\ku$.
Moreover, it is a graded extension if and only if it is isomorphic (up to shifts) to either $\fE(\e)_{1,0,0}$, $\fE(\e)_{0,1,0}$, $\fE(\e)_{0,0,1}$ or $\fE(\e)_{0,0,0}$.
 \smallskip
 
 \item Let $\fE$ be an extension of $\fL(\sigma,-)$ by $\fL(\e)$, then $\fE\simeq\fE(\e)_{s,t,u}^*$ for some $s,t,u\in\ku$.
Moreover, it is a graded extension if and only if it is isomorphic (up to shifts) to either $\fE(\e)_{1,0,0}^*$, $\fE(\e)_{0,1,0}^*$, $\fE(\e)_{0,0,1}^*$ or $\fE(\e)_{0,0,0}$.
\end{enumerate}
 \end{lema}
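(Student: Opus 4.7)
The plan is to mirror the proof of Lemma \ref{le:notrivial extension}, but with three non-split classes instead of one. As in the proof of Lemma \ref{le:trivial extensions}, finite-dimensionality of $\D$ identifies the ungraded $\Ext^{1}(\fL(\e),\fL(\sigma,-))$ with the direct sum over $\ell\in\Z$ of the graded $\Ext^{1}(\fL(\e),\fL(\sigma,-)[\ell])$, so it suffices to classify graded extensions $0\to\fL(\sigma,-)[\ell]\to\fE\to\fL(\e)\to 0$. I fix a graded $\DSn$-section $\iota$, placing $\iota(\e)\in\fE(0)$.

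By Remark \ref{obs:action maps}, the actions $V\cdot\iota(\e)\in\fE(-1)$ and $\oV\cdot\iota(\e)\in\fE(1)$ are $\DSn$-morphisms from $V\otimes\e\simeq\oV\otimes\e\simeq(\sigma,-)$. Since $\chgr\fL(\sigma,-)$ has the $(\sigma,-)$ isotypic component only in degrees $0$ and $-2$, the condition $V\cdot\iota(\e)\neq 0$ forces $\ell\in\{-1,1\}$ and the condition $\oV\cdot\iota(\e)\neq 0$ forces $\ell\in\{1,3\}$.

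I then case-analyze. If both actions vanish, $\fE$ splits and $\fE\simeq\fE(\e)_{0,0,0}$ up to shift. If only $V\cdot\iota(\e)\neq 0$, then $\ell=-1$, and $\iota(\e)$ is a highest-weight; simplicity of $\fL(\sigma,-)[-1]$ forces $\D\iota(\e)=\fE$, which is a highest-weight quotient of $\fM(\e)$, and matching graded characters with the submodule lattice of $\fM(\e)$ from \cite[Theorems 7--10]{PV2} yields $\fE\simeq\fM(\e)/\soco\fM(\e)=\fE(\e)_{1,0,0}$. Symmetrically, if only $\oV\cdot\iota(\e)\neq 0$ then $\ell=3$ and the dual argument via Lemma \ref{le:coVerma} gives $\fE\simeq\fW(\e)/\soco\fW(\e)=\fE(\e)_{0,1,0}$. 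In the remaining subcase both actions are non-zero and $\ell=1$. Writing $V\cdot\iota(\e)=\beta c_1$ and $\oV\cdot\iota(\e)=\gamma c_8$ after identifying $\fL(\sigma,-)[1]\subset\fE$ with its copy in $\fT_{0,1}$, I apply to $\iota(\e)$ the Drinfeld-double cross-relation $y_{(12)}x_{(12)}-x_{(12)}y_{(12)}=[y_{(12)},x_{(12)}]\in\DSn$ dictated by the comultiplication in item \ref{item:comultiplication} of \S\ref{subsec:lo basico}. Its left-hand side lands in the $(\tau,1)+(\tau,2)$ isotypic summand of $\fL(\sigma,-)[1](0)$ while its right-hand side lies in $\DSn\cdot\iota(\e)=\ku\iota(\e)$ of weight $\e$; comparing $\DSn$-weights forces the $\e$-part to vanish and produces a fixed scalar proportionality $\gamma=c\beta$, where $c\neq 0$ is determined by the actions of $y_{(12)}$ and $x_{(12)}$ inside the simple module $\fL(\sigma,-)$. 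Rescaling $\iota(\e)$ then gives $\fE\simeq\fT_{0,1}=\fE(\e)_{0,0,1}$. The three pure graded classes thus span the three-dimensional ungraded Ext space, so any ungraded $\fE$ is of the form $\fE(\e)_{s,t,u}$. Part (ii) follows from (i) by dualizing the short exact sequence, using that $\fL(\e)$ and $\fL(\sigma,-)$ are self-dual.

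The main obstacle is the $\ell=1$ subcase: one has to unpack the Drinfeld-double cross-relation in the appendix's basis in order to check explicitly that the two scalar parameters $(\beta,\gamma)$, which are a priori independent, are forced into a one-parameter family, and that this family realizes $\fT_{0,1}$ after rescaling the chosen generator.
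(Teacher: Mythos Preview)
Your overall strategy matches the paper's: reduce to graded extensions, case-split on whether $V\cdot\iota(\e)$ and $\oV\cdot\iota(\e)$ vanish, and in the mixed $\ell=1$ case show the two scalars are forced to be proportional so that $\fE\simeq\fT_{0,1}$. The gap is precisely in this last step. The cross-relation you invoke, $[y_{(12)},x_{(12)}]\in\DSn$, gives no information: with $x_{(12)}\cdot\ft=\beta c_1$ and $y_{(12)}\cdot\ft=\gamma c_8$, the appendix records $y_{(12)}c_1=0$ and $x_{(12)}c_8=0$, so both $y_{(12)}x_{(12)}\cdot\ft$ and $x_{(12)}y_{(12)}\cdot\ft$ vanish. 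The identity $0=0$ does not link $\beta$ and $\gamma$; your claim that the left-hand side ``lands in the $(\tau,1)+(\tau,2)$ isotypic summand'' is vacuously true because it is zero.

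The paper repairs this by choosing a longer word whose $x$-side traverses a non-vanishing path through the middle layer of $\fL(\sigma,-)$. Concretely it uses the defining relation (from \cite[p.~427]{PV2})
\[
(23)y_{(23)}y_{(13)}x_{(12)}=(23)x_{(12)}y_{(13)}y_{(23)}-(23)(12)(\delta_{(23)}-\delta_{(23)(12)})y_{(23)}-(23)y_{(23)}(12)(\delta_{(13)}-\delta_{(13)(12)}).
\]
Applied to $\ft$, the left side equals $-\beta c_8$ via the chain $c_1\xrightarrow{y_{(13)}}\tfrac{1}{1-\zeta}(\zeta c_4-c_6)\xrightarrow{y_{(23)}}c_{10}\xrightarrow{(23)}-c_8$. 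On the right, the first term vanishes because $y_{(23)}\ft$ already sits in the top degree of $\fL(\sigma,-)[1]$, the last term vanishes because $\delta_g\ft=\delta_{g,e}\ft$, and the middle term computes to $-\gamma c_8$ (using $y_{(23)}\ft=\gamma c_9$ by $\DSn$-equivariance). This forces $\beta=\gamma$, after which rescaling $\ft$ gives $\fE\simeq\fT_{0,1}$. So the fix is not conceptual but requires selecting a relation that avoids the zero entries $y_{(12)}c_1$ and $x_{(12)}c_8$ in the action tables.
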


\begin{proof}
(i) As in the previous lemma, it is enough to consider the graded case. We can assume that $\fE\simeq\fL(\sigma,-)[\ell]\oplus\e$ as graded $\DSn$-modules and $\fL(\sigma,-)[\ell]$ is a graded $\D$-submodule of $\fE$. If $\e$ is either a highest-weight or a lowest-weight, then $\fE$ is isomorphic to either $\fE(\lambda)_{1,0,0}$, $\fE(\lambda)_{0,1,0}$ or $\fE(\lambda)_{0,0,0}$. 

Suppose now $V\cdot\e\neq0$ and $\oV\cdot\e\neq0$. This forces that: $\ell=1$, $V\cdot\e$ is the lowest-weight of $\fL(\sigma,-)[1]$ and $\oV\cdot\e$ is the highest-weight of $\fL(\sigma,-)[1]$. Let $\ft$ be a generator of the weight $\e$ of $\fE$. To complete the proof, we shall check that \eqref{eq:action on ft01} holds up to a change of basis. 

We can use the basis of $\fL(\sigma,-)$ given in the appendix. Thus, the lowest-weight $\nu$ of $\fL(\sigma,-)[1]$ is spanned by $\{c_1,c_2,c_3\}$ and $\delta_{(12)}\cdot\nu=\ku c_1$. Since the action is a morphism of $\DSn$-modules, $\delta_{(12)}\cdot(\xij{12}\cdot\ft)=\xij{12}\cdot\ft$. Then, $\xij{12}\cdot\ft=rc_1$ for some $0\neq r\in\ku$. Notice that $\xij{12}\cdot\ft=0$ implies $V\cdot\e=0$ because $\Sn_3$ acts transitively on the $\xij{ij}$'s. Similarly we deduce that $\yij{12}\cdot\ft=vc_8$ for some $0\neq v\in\ku$. 

On the other hand, from the defining relations of $\D$ (cf. \cite[page 427]{PV2}), we deduce that
\begin{align*}
(23)\yij{23}\yij{13}\xij{12}=&(23)\xij{12}\yij{13}\yij{23}\\
-(23)(12)&(\delta_{(23)}-\delta_{(23)(12)})\yij{23}-(23)\yij{23}(12)(\delta_{(13)}-\delta_{(13)(12)}).
\end{align*}
We compute the action of both side on $\ft$. First, using the appendix, we have that
$$
(23)\yij{23}\yij{13}\xij{12}\cdot\ft=-rc_8.
$$
Next, the first term of the right hand acts by zero because $\yij{23}\cdot\ft$ is in the highest-weight of $\fL(\sigma,-)$. Also, the last term acts by zero because $\delta_g\cdot\ft=\delta_{g,e}\ft$. Finally,
$$
-(23)(12)(\delta_{(23)}-\delta_{(23)(12)})\yij{23}\cdot\ft=-\yij{12}\cdot\ft=-vc_8.
$$
Hence $r=v\neq0$. Therefore, if we change $\ft$ by $\frac{1}{r}\ft$, we have that $\fE\simeq\fT_{0,1}$ as desired.

(ii) follows from (i) by duality.
\end{proof}

By the above lemmas the separated quiver of $\D$ is given by Figure \ref{fig:separated quiver}. Then, we deduce the following proposition, see for instance \cite[\S 4.2]{dim72} for details.

\begin{prop}\label{prop:wild}
$\D$ is of wild representation type.\qed
\end{prop}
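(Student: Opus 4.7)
The plan is to read off the underlying graph of the separated quiver of $\D$ from the extension data computed in Lemmas \ref{le:trivial extensions}, \ref{le:notrivial extension}, and \ref{le:notrivial extension varepsilon}, and then to apply the classical criterion which transfers wild representation type from the separated algebra to the original algebra.

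First, I would assemble the separated quiver explicitly. Its vertex set consists of two copies $\{\fL(\lambda)^+\}_{\lambda\in\Lambda}$ and $\{\fL(\lambda)^-\}_{\lambda\in\Lambda}$ of the simple modules, and the number of arrows from $\fL(\lambda)^+$ to $\fL(\mu)^-$ equals $\dim_\ku\Ext^1_{\D}(\fL(\lambda),\fL(\mu))$. From the preceding lemmas one reads: all extensions internal to the set $\{\fL(\varepsilon),\fL(e,\rho),\fL(\tau,0)\}$, as well as self-extensions of $\fL(\sigma,-)$, split by Lemma \ref{le:trivial extensions}; the extension space of $\fL(e,\rho)$, respectively $\fL(\tau,0)$, by $\fL(\sigma,-)$ is two-dimensional, parametrised by the Baer sums $\fE(\lambda)_{s,t}$ of Lemma \ref{le:notrivial extension}; the extension space of $\fL(\varepsilon)$ by $\fL(\sigma,-)$ is three-dimensional, parametrised by $\fE(\varepsilon)_{s,t,u}$ as in Lemma \ref{le:notrivial extension varepsilon}; and by duality the reversed $\Ext^1$'s have the same dimensions. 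The four simples $\fL(\lambda)$ with $\lambda\in\Lsp$ are projective--injective and so contribute no arrows. This reproduces Figure \ref{fig:separated quiver}.

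Second, I would isolate a wild subquiver. Between the vertices $\fL(\varepsilon)^+$ and $\fL(\sigma,-)^-$ the separated quiver carries three parallel arrows, giving the generalised Kronecker quiver with three arrows as a full subquiver. Its path algebra is of wild representation type (the $n$-Kronecker algebra is of finite, tame, or wild representation type according as $n=1$, $n=2$, or $n\geq 3$). Consequently the separated quiver of $\D$ is itself wild.

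Third, I would close the argument by invoking the classical principle, discussed in \cite[\S 4.2]{dim72}, that a finite-dimensional algebra whose separated quiver is of wild representation type is itself of wild representation type. The substantive work is just the correct bookkeeping of the extension dimensions assembled in the first step; after that the conclusion is immediate, and I foresee no real obstacle beyond verifying that the count of parameters in Lemma \ref{le:notrivial extension varepsilon} genuinely gives a three-dimensional $\Ext^1$ (rather than a redundantly parametrised family), which follows from the linear independence of the three extension classes $\fM(\varepsilon)/\soc\fM(\varepsilon)$, $\fW(\varepsilon)/\soc\fW(\varepsilon)$ and $\fT_{0,1}$ built into Definition \ref{def:ext de varepsilon}.
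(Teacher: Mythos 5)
Your proof is correct and follows essentially the same route as the paper: assemble the separated quiver from the extension computations of Lemmas \ref{le:trivial extensions}, \ref{le:notrivial extension} and \ref{le:notrivial extension varepsilon}, then invoke the classical separated-quiver criterion. The paper simply exhibits the quiver (Figure \ref{fig:separated quiver}) and delegates the criterion to \cite[\S 4.2]{dim72}, whereas you make explicit both the dimension count giving $\dim\Ext^1_\D(\fL(\varepsilon),\fL(\sigma,-))=3$ and the resulting $3$-Kronecker full subquiver that witnesses wildness; this is a correct and welcome unpacking of the same argument.
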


\section{The tensor products of non-projective simple modules}\label{sec:tensores}

In this section we describe the tensor products between the simple modules $\fL(\varepsilon)$, $\fL(e,\rho)$, $\fL(\tau,0)$ and $\fL(\sigma,-)$.

We will use the bases of the simple modules and the action over them given in the appendix. The action on the tensor product is induced by the comutilplication given in \S\ref{subsec:lo basico} \ref{item:comultiplication}. We will often use the fusion rules of the simple $\DSn$-modules given in \cite[\S 2.5.4]{PV2}.

\subsection{How to compute the indecomposable submodules}\label{subsec:key tools}

We explain the general strategy which we shall follow to compute the indecomposable summands. These ideas apply to any graded module $\fN=\oplus_{i\in\Z}\fN(i)$ over the Drinfeld double of a finite-dimensional Nichols algebra. See also \cite[\S 3.2]{PV2}

Assume that $\chgr\fN=\sum_{\lambda\in\Lambda}p_{\fN,\fL(\lambda)}\chgr\fL(\lambda)$ and $a_{\fN,\fL(\lambda),i}\neq0$. In view of Remark \ref{obs:sobre los subcocientes}, we shall start by computing the submodules $\D\lambda$ generated by the weights $\lambda\subset\fN(i)$. Among these, we will first consider the weights $\lambda$ such that $i$ is either maximal or minimal because this implies that $\D\lambda$ is a quotient of either the Verma module $\fM(\lambda)$ or the co-Verma module $\fW(\lambda)$. In fact, $\lambda$ will be either a highest or lowest weight. We know these quotients from \cite[\S 4]{PV2} and Lemma \ref{le:coVerma}, respectively.

For the remainder weights, we will repeatedly use that the action maps $V\ot\lambda\longrightarrow\fN(i-1)$ and $\oV\ot\lambda\longrightarrow\fN(i+1)$ are morphisms of $\DSn$-modules; this is Remark \ref{obs:action maps} with $\lambda$ instead of $\fN(i)$. Therefore $\D\lambda$ will be generated by the successive images of the former maps. We shall decompose $V\ot\lambda$ (respectively $\oV\ot\lambda$) into a direct sum of weights and apply the action on each summand. This restriction morphism will be zero or an injection by Schur Lemma. Hence it is enough to compute the action in a single element of each weight. The knowledge of $\ch\fN(i-1)$ (respectively $\ch\fN(i+1)$) will help to make less computations. 

Finally, we shall analyze the intersections of the submodules $\D\lambda$.

\subsection{The tensor product \texorpdfstring{$\fL(\tau, 0)\otimes \fL(e, \rho)$}{s}}

\begin{prop}\label{prop:1}
It holds that
\begin{align}\label{eq:fL tau 0 x fL e rho}
\chgr\left(\fL(\tau, 0)\otimes \fL(e, \rho)\right)=&\chgr\fL(\tau,1)+\chgr\fL(\tau,2)+t^{-2}\chgr\fL(\varepsilon).
\end{align}
Therefore $\fL(\tau, 0)\otimes \fL(e, \rho)\simeq\fL(\tau,1)\oplus\fL(\tau,2)\oplus\fL(\varepsilon)[-2]$ as graded modules.
\end{prop}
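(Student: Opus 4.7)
The plan is to follow the strategy outlined in \S\ref{subsec:key tools}. The first step is to apply Lemma \ref{le:chgr es morf de anillos} to compute $\chgr(\fL(\tau,0)\otimes\fL(e,\rho))=\chgr\fL(\tau,0)\cdot\chgr\fL(e,\rho)$, by expanding the product of the two given graded characters via the fusion rules of \cite[\S 2.5.4]{PV2} for the simple $\DSn$-modules. In parallel, I would expand the proposed right-hand side of \eqref{eq:fL tau 0 x fL e rho}, using the formula $\chgr\fL(\tau,i)=(\tau,i)\cdot\chgr\BV(V)$ for $i=1,2$ together with the explicit $\chgr\BV(V)$ from \eqref{eq:character of FK3}, and then verify that the two expressions agree coefficient by coefficient in $K[t,t^{-1}]$. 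Total dimensions provide a first consistency check: both sides are $7\cdot 7=49$.

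Once the graded character identity \eqref{eq:fL tau 0 x fL e rho} is established, the module-theoretic decomposition falls out quickly. By Remark \ref{obs:sobre los subcocientes}(ii), since $(\tau,1),(\tau,2)\in\Lsp$ the simples $\fL(\tau,1)$ and $\fL(\tau,2)$ are projective (hence injective), so each of their occurrences as a composition factor splits off as a direct summand. Thus $\fL(\tau,0)\otimes\fL(e,\rho)\simeq\fL(\tau,1)\oplus\fL(\tau,2)\oplus\fN$ for some graded $\D$-submodule $\fN$. Comparing graded characters forces $\chgr\fN=t^{-2}\chgr\fL(\varepsilon)=t^{-2}\,\varepsilon$, so $\fN$ has a single composition factor $\fL(\varepsilon)[-2]$. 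Therefore $\fN\simeq\fL(\varepsilon)[-2]$, completing the decomposition.

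The main obstacle is the expansion in step one: the product $\chgr\fL(\tau,0)\cdot\chgr\fL(e,\rho)$ unfolds into nine fusion products in the Grothendieck ring of $\DSn$-modules, and the expansions of $(\tau,i)\cdot\chgr\BV(V)$ contribute five terms each, so matching the two sides is a careful bookkeeping exercise. Nevertheless, it is routine given the explicit Clebsch--Gordan rules for $\DSn$, and no further structural input about $\D$-modules is needed beyond projectivity of the $\Lsp$-simples. In particular, none of the more delicate tools from \S\ref{subsec:key tools} (analysis of highest/lowest weights, quotients of Verma and co-Verma modules) are required here, which makes this tensor product one of the simpler cases addressed in the paper.
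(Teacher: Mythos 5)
Your proposal matches the paper's own proof: both establish \eqref{eq:fL tau 0 x fL e rho} by expanding $\chgr\fL(\tau,0)\cdot\chgr\fL(e,\rho)$ via Lemma~\ref{le:chgr es morf de anillos} and the fusion rules, then invoke Remark~\ref{obs:sobre los subcocientes}(ii) to split off the projective factors $\fL(\tau,1)$ and $\fL(\tau,2)$, leaving a graded complement whose character forces it to be $\fL(\varepsilon)[-2]$. The dimension count $7\cdot 7=49$ is a nice extra sanity check, but otherwise the argument is the same.
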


\begin{proof}
As $\chgr$ is a ring homomorphism and using the formulae of \cite[\S 5.2]{PV2}, we have that
\begin{align*}
\chgr\left(\fL(\tau, 0)\otimes \fL(e, \rho)\right)=&(\tau,0)(e,\rho)+t^{-1}(\sigma,+)\bigl((\tau,0)+(e,\rho)\bigr)+\\
&+t^{-2}\bigl((\tau,0)(\tau,0)+(\sigma,+)(\sigma,+)+(e,\rho)(e,\rho)\bigr)\\
&+t^{-3}(\sigma,+)\bigl((\tau,0)+(e,\rho)\bigr)+t^{-4}(\tau,0)(e,\rho)\\
=(\tau,1)+(\tau,2)+t^{-1}(\sigma,-)&\bigl((\tau,1)+(\tau,2)\bigr)+t^{-2}\bigl((\tau,1)+(\tau,2)\bigr)\bigl((\tau,1)+(\tau,2)\bigr)+\\
+t^{-2}\varepsilon+ t^{-3}&(\sigma,-)\bigl((\tau,1)+(\tau,2)\bigr)+t^{-4}\bigl((\tau,1)+(\tau,2)\bigr).
\end{align*}
Then, \eqref{eq:fL tau 0 x fL e rho} is a straightforward computation.

By \eqref{eq:fL tau 0 x fL e rho} and Remark \ref{obs:sobre los subcocientes}, the simple modules $\fL(\tau,1)$ and $\fL(\tau,2)$ are direct summands of $\fL(\tau, 0)\otimes \fL(e, \rho)$. Thus, the isomorphism holds because $\fL(\tau, 0)\otimes \fL(e, \rho)$ has only three composition factors.
\end{proof}

\begin{obs}
The weights $(\tau,1)$ and $(\tau,2)$ of the degree zero component are obviously highest-weights generating the simple submodules $\fL(\tau,1)$ and $\fL(\tau,2)$. The element generating the submodule $\fL(\varepsilon)$ is 
\begin{align*}
d=a_6\otimes b_2+a_7\otimes b_1 +a_3\otimes b_4+a_5\otimes b_3+a_4\otimes b_5+a_1\otimes b_6+a_2\otimes b_7,
\end{align*}
where the elements $a_i, b_j$ are presented in the appendix. In fact, using the appendix, we see that $y_{(12)}\cdot d=0$ and $x_{(12)}\cdot d=0$. 
\end{obs}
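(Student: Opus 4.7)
The plan is to address the two assertions of the remark separately. For the first assertion, I would observe that the degree-zero component of the tensor product is exactly $\fL(\tau,0)(0)\otimes\fL(e,\rho)(0)=(\tau,0)\otimes(e,\rho)\simeq(\tau,1)\oplus(\tau,2)$ as $\DSn$-modules, using the fusion rules recalled from \cite[\S 5.2]{PV2}. Because the graded character \eqref{eq:fL tau 0 x fL e rho} shows that $(\fL(\tau,0)\otimes\fL(e,\rho))(1)=0$ and because the action map $\oV\otimes\fN(0)\to\fN(1)$ is a morphism of $\DSn$-modules (Remark \ref{obs:action maps}), the two summands $(\tau,1)$ and $(\tau,2)$ are highest-weights in the tensor product. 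Since $(\tau,1),(\tau,2)\in\Lambda_{sp}$, their Verma modules coincide with $\fL(\tau,1)$ and $\fL(\tau,2)$ by Lemma \ref{le:coVerma}(vi), so the $\D$-submodules they generate are forced to be simple and isomorphic to $\fL(\tau,1)$ and $\fL(\tau,2)$, respectively.

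For the second assertion, the $\fL(\varepsilon)$-summand appears shifted by $t^{-2}$ in \eqref{eq:fL tau 0 x fL e rho}, so its generator must lie in degree $-2$. The plan is to establish two things: (a) the element $d$ spans a copy of the trivial $\DSn$-weight $\varepsilon$ inside $(\fL(\tau,0)\otimes\fL(e,\rho))(-2)$, and (b) $V\cdot d=0=\oV\cdot d$. Once (a) and (b) are in hand, we conclude $\D\cdot d=\DSn\cdot d=\ku d\simeq\fL(\varepsilon)[-2]$, so $d$ indeed generates the desired summand.

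Step (a) is a finite check: verify $g\cdot d=d$ and $\delta_g\cdot d=\delta_{g,e}\,d$ for every $g\in\Sn_3$, using the explicit action on the $a_i,b_j$ from the appendix and the coproduct formulas $\Delta(g)=g\otimes g$ and $\Delta(\delta_g)=\sum_h\delta_h\otimes\delta_{h^{-1}g}$ from \S\ref{subsec:lo basico}\ref{item:comultiplication}. For step (b), since $\Sn_3$ acts transitively on the bases of $V$ and $\oV$ and the action maps are $\DSn$-equivariant, it suffices to establish $x_{(12)}\cdot d=0$ and $y_{(12)}\cdot d=0$, which are precisely the two identities asserted in the remark. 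These follow from a direct computation using
$$\Delta(x_{(12)})=x_{(12)}\otimes 1+(12)\otimes x_{(12)}$$
and
$$\Delta(y_{(12)})=y_{(12)}\otimes 1+\sum_{g\in\Sn_3}\sgn(g)\,\delta_g\otimes y_{g^{-1}(12)g},$$
combined with the appendix formulas for the action of $\xij{12},\yij{12},(12),\delta_g$ on the $a_i$ and $b_j$.

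The main obstacle is pure bookkeeping. Each of the seven summands $a_i\otimes b_j$ of $d$ contributes terms to both halves of $\Delta(x_{(12)})$ and $\Delta(y_{(12)})$, and one must verify that these contributions cancel in pairs. Schur's lemma tames the work: because every $a_i$ (resp. $b_j$) lies in a specific $\DSn$-weight of $\fL(\tau,0)$ (resp. $\fL(e,\rho)$), most potential cross-terms vanish for representation-theoretic reasons, and only a handful of scalar coefficients must be matched. The symmetry of the coefficients in $d$ (all $\pm 1$, and the pairs of summands $(a_6\otimes b_2,a_7\otimes b_1)$, $(a_3\otimes b_4,a_5\otimes b_3,a_4\otimes b_5)$, $(a_1\otimes b_6,a_2\otimes b_7)$ respecting the $\DSn$-orbit structure) is designed precisely to produce these cancellations, so the computation ultimately reduces to writing down a handful of scalar identities.
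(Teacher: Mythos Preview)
Your proposal is correct and follows essentially the same approach as the paper. The paper's justification of the remark is extremely terse—it simply asserts that ``using the appendix, we see that $y_{(12)}\cdot d=0$ and $x_{(12)}\cdot d=0$''—while you spell out why those two vanishings suffice (transitivity of the $\Sn_3$-action together with $d$ spanning the trivial weight) and outline the bookkeeping involved; but the underlying argument is the same direct verification.
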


\subsection{The tensor product \texorpdfstring{$\fL(\sigma,-)\otimes \fL(\sigma,-)$}{s}}
As in \eqref{eq:fL tau 0 x fL e rho} we can see that
\begin{align}\label{eq:fL sigma menos x fL sigma menos}
\chgr\left(\fL(\sigma,-)\otimes\right.&\left. \fL(\sigma,-)\right)=\chgr\fL(\tau,1)+\chgr\fL(\tau,2)+2t^{-1}\chgr\fL(\sigma,-)+\\
\notag&+(1+2t^{-2}+t^{-4})\chgr\fL(\varepsilon)+(1+t^{-2})\bigl(\chgr\fL(e,\rho)+\chgr\fL(\tau,0)\bigr).
\end{align}
Therefore $\fL(\tau,1)$ and $\fL(\tau,2)$ are graded direct summands of $\fL(\sigma,-)\otimes \fL(\sigma,-)$ by Remark \ref{obs:sobre los subcocientes}. 
The aim  of this subsection is to show the next proposition. We give the proof after some preparatory lemmas. Recall the socle filtration $\{\soco^i\fA\}_{i\geq1}$ is given by the preimages of $\soco(\fA/\soco^{i-1}\fA)$ for $i>1$. 

\begin{prop}\label{prop: sigma menos por sigma menos}
There exists a graded indecomposable module $\fA$ with $\chgr\fA=$
\begin{align*}
=2t^{-1}\chgr\fL(\sigma,-)+(1+t^{-2}+t^{-4})\chgr\fL(\varepsilon)+(1+t^{-2})\bigl(\chgr\fL(e,\rho)+\chgr\fL(\tau,0)\bigr)
\end{align*}
such that $\fA^*\simeq\fA$ and 
$$\fL(\sigma,-)\otimes \fL(\sigma,-)\simeq\fL(\tau,1)\oplus\fL(\tau,2)\oplus\fL(\varepsilon)\oplus\fA.$$
Moreover
\begin{align*}
\operatorname{soc}\fA&=t^{-1}\fL(\sigma,-),\\
\soco^2\fA/\soco\fA&\simeq (1+t^{-2}+t^{-4})\fL(\varepsilon)\oplus(1+t^{-2})\fL(e,\rho)\oplus(1+t^{-2})\fL(\tau,0),\\
\soco^3\fA/\soco^2\fA&\simeq t^{-1}\fL(\sigma,-),\\
\soco^3\fA&=\fA.
\end{align*}
\end{prop}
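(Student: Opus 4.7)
I follow the strategy of \S\ref{subsec:key tools}, starting from the graded character formula \eqref{eq:fL sigma menos x fL sigma menos}. The simple projective modules $\fL(\tau,1)$ and $\fL(\tau,2)$ each appear with multiplicity one in that character, so by Remark \ref{obs:sobre los subcocientes}(ii) they split off as direct summands:
\[
\fL(\sigma,-)\otimes\fL(\sigma,-)\simeq\fL(\tau,1)\oplus\fL(\tau,2)\oplus\fN,
\]
and the task reduces to decomposing $\fN$.

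\emph{Splitting off the trivial summand.} Comparison of graded characters gives $\fL(\sigma,-)^{*}\simeq\fL(\sigma,-)[2]$, so the evaluation and coevaluation for $\fL(\sigma,-)$ yield non-zero graded morphisms $\fL(\varepsilon)[-2]\to\fN\to\fL(\varepsilon)[-2]$, whose composition is the categorical dimension of $\fL(\sigma,-)$ with respect to the spherical pivot $\sgn$. Computing the trace of $\sgn$ on each weight of $\chgr\fL(\sigma,-)$ --- using that $\sgn$ acts on the $g$-component of a weight by $\sgn(g)$ --- gives $\dim_{q}\fL(\sigma,-)=-3+4-3=-2\neq 0$. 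Hence $\fL(\varepsilon)[-2]$ splits off $\fN$, and denoting the complement by $\fA$ we obtain $\fN\simeq\fL(\varepsilon)[-2]\oplus\fA$ with $\chgr\fA$ as claimed. Alternatively, the invariant vector may be exhibited explicitly using the bases in the appendix and the comultiplication of \S\ref{subsec:lo basico}\ref{item:comultiplication}, in analogy with the element $d$ constructed above.

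\emph{Indecomposability and socle filtration.} It suffices to show that $\soco\fA\simeq\fL(\sigma,-)[-1]$ is simple, which immediately forces indecomposability. I analyse the weights of $\fA$ working inward from the extremal degrees $0$ and $-4$: the weights there are automatically highest- (resp.~lowest-) weights, so the submodules they generate are quotients of $\fM(\lambda)$ (resp.~$\fW(\lambda)$) for $\lambda\in\{\varepsilon,(e,\rho),(\tau,0)\}$, and the required information on their lattice of submodules is provided by \cite[\S 4]{PV2} and Lemma \ref{le:coVerma}; each such quotient has socle $\fL(\sigma,-)[-1]$. For the remaining interior weights, the $\DSn$-linearity of the action maps $V\otimes S\to\fA(i-1)$ and $\oV\otimes S\to\fA(i+1)$ (Remark \ref{obs:action maps}) determines each such map up to a scalar on each weight summand of the target, so a direct case-by-case verification shows that every non-zero submodule of $\fA$ meets the prescribed copy of $\fL(\sigma,-)[-1]$. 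Iterating the analysis on $\fA/\soco\fA$ and $\fA/\soco^{2}\fA$ recovers the three-layered filtration. Self-duality $\fA^{*}\simeq\fA$ follows from applying duality to the whole tensor square and invoking Krull--Schmidt: $(\fL(\sigma,-)\otimes\fL(\sigma,-))^{*}$ is isomorphic to $\fL(\sigma,-)\otimes\fL(\sigma,-)$ up to a global grading shift, each of the other three summands is preserved by duality (up to shift), and hence so is $\fA$.

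\emph{Main obstacle.} The substantive work lies in the weight-by-weight analysis of the interior degree $\fA(-2)$, which is $17$-dimensional and contains seven distinct weights. Keeping careful track of how the action of $V$ and $\oV$ connects these weights to the degrees $-1$ and $-3$, and confirming that no submodule of $\fA$ can avoid the unique socle copy of $\fL(\sigma,-)[-1]$, constitutes the bulk of the computation.
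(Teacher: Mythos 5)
Your quantum-dimension argument for splitting off $\fL(\varepsilon)[-2]$ is valid and genuinely different from the paper's approach. The paper instead exhibits the invariant vector explicitly in Lemma \ref{le:Le en sigma - sigma -} (the element $\varepsilon_{-2}$, a specific combination of the four $\varepsilon$-weights in degree $-2$, checked directly to be annihilated by $\xij{12}$ and $\yij{12}$). Your route avoids that calculation at the price of invoking the spherical structure: since $\fL(\sigma,-)^{*}\simeq\fL(\sigma,-)[2]$ as graded modules and $\operatorname{tr}_{\fL(\sigma,-)}(\sgn)=-2\neq 0$, the coevaluation and evaluation for the unit exhibit $\fL(\varepsilon)[-2]$ as a split summand. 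This is cleaner conceptually and is the kind of observation worth recording: it will split off a trivial summand whenever a self-dual (up to shift) simple module has nonzero quantum dimension.

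The indecomposability step, however, contains a genuine gap. You reduce correctly to showing $\soco\fA\simeq\fL(\sigma,-)[-1]$ is simple, and your analysis of the extremal degrees $0$ and $-4$ (and of the interior $\varepsilon$-weight of degree $-2$) is sound: this is the content of the paper's Lemma \ref{le:fA' sigma menos sigma menos} and produces a submodule $\fA'\subsetneq\fA$ with simple socle. But your proposal stops there. The phrase ``a direct case-by-case verification shows that every non-zero submodule of $\fA$ meets the prescribed copy of $\fL(\sigma,-)[-1]$'' is the conclusion, not an argument, and the ``main obstacle'' paragraph locates the difficulty in the wrong place. The decisive step --- the paper's Lemma \ref{le:fA} --- is to show that a $(\sigma,-)$-weight $\lambda$ of degree $-1$ not lying in $\D S$ satisfies $\D\lambda\supseteq\fA'$ and $\D\lambda/\fA'\simeq\fL(\sigma,-)$, so that $\fA=\D\lambda$ is cyclic. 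This requires: (a) using the self-duality $\fN\simeq\fN^{*}[-4]$ to deduce $\oV\cdot N=\fA'(0)$ from item \ref{item:fA' -4} of that lemma; (b) a non-obvious coincidence $\mu_{1}=\mu_{2}$ between the $\varepsilon$-weight of degree $-2$ produced by $V$ from degree $-1$ and the one produced by $\oV$ from degree $-3$, established not by direct computation but by bounding the number of $\varepsilon$-copies in a quotient of $\fM(\sigma,-)$; and (c) an explicit element $z\in\fA'(-3)$ pinning down that weight as $\ku\varepsilon'_{-2}$. None of this is routine weight-bookkeeping in $\fA(-2)$, and without it you cannot rule out that $\soco\fA$ has a second simple constituent coming from a $(\sigma,-)$-submodule disjoint from $\fA'$.
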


Figure \ref{fig:fA} helps the reader to visualize the module $\fA$ and to follow the proof of the following lemmas.

\begin{figure}[h]
\begin{center}
\includegraphics{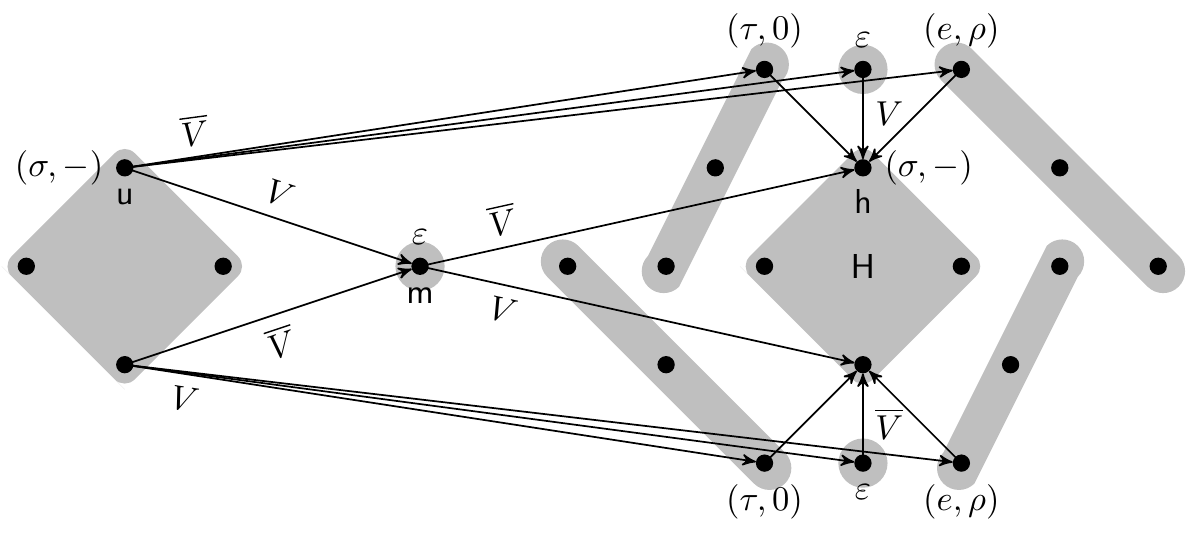}
\caption{The dots represent the weights of $\fA$. Each shadow region correspond to a composition factor whose highest-weight is in the top. The actions of $V$ and $\oV$ are illustrated by the arrows.}\label{fig:fA}
\end{center}
\end{figure}

By the fusion rules \cite[\S 2.5.4]{PV2}, $\fL(\sigma,-)\otimes \fL(\sigma,-)$ has four copies of the weight $\varepsilon$ in degree $-2$. In fact, these are
\begin{align*}
&\varepsilon_{-2,0}=c_1\ot c_8+c_2\ot c_9 + c_3\ot c_{10},&&\varepsilon_{-1,-1,1}=c_4\ot c_5+c_5\ot c_4,\\
&\varepsilon_{0,-2}=c_8\ot c_1+c_9\ot c_2 + c_{10}\ot c_3,&&\varepsilon_{-1,-1,2}=c_6\ot c_7+c_7\ot c_6;
\end{align*}
the subindices of $\varepsilon_{i,j}$ refer to the degree of $c_k$, see the appendix. We will see that the direct summand $\fL(\varepsilon)$ in the proposition is the following submodule.

\begin{lema}\label{le:Le en sigma - sigma -}
Let $\varepsilon_{-2}=-\zeta^2\varepsilon_{-1,-1,1}+\varepsilon_{-1,-1,2}+(1-\zeta^2)\varepsilon_{0,-2}-(1-\zeta^2)\varepsilon_{-2,0}$. Then the submodule generated by $\varepsilon_{-2}$ is isomorphic to $\fL(\varepsilon)$.
\end{lema}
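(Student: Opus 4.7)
The plan is to verify that $\varepsilon_{-2}$ generates a one-dimensional $\D$-submodule on which the augmentation ideal of $\D$ acts by zero; such a submodule is, by definition, isomorphic to $\fL(\varepsilon)$. By the triangular decomposition $\D\simeq\BV(V)\otimes\DSn\otimes\BV(\oV)$ recalled in \ref{item:triangular decomposition}, this amounts to three conditions: (i) the $\DSn$-action on $\varepsilon_{-2}$ is by the trivial character $\varepsilon$, (ii) $V\cdot\varepsilon_{-2}=0$, and (iii) $\oV\cdot\varepsilon_{-2}=0$. Condition (i) is automatic from the construction, since each of the four vectors $\varepsilon_{-2,0},\,\varepsilon_{-1,-1,1},\,\varepsilon_{-1,-1,2},\,\varepsilon_{0,-2}$ was singled out precisely as a generator of a copy of the $\DSn$-weight $\varepsilon$ in degree $-2$ of $\fL(\sigma,-)\otimes\fL(\sigma,-)$ (by the fusion rules they span the whole $\varepsilon$-isotypic component in that degree).

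For (ii) and (iii) I invoke Remark \ref{obs:action maps}: the action maps $V\otimes\ku\varepsilon_{-2}\longrightarrow\fL(\sigma,-)^{\otimes 2}$ and $\oV\otimes\ku\varepsilon_{-2}\longrightarrow\fL(\sigma,-)^{\otimes 2}$ are morphisms of $\DSn$-modules whose source is $V\otimes\varepsilon\simeq(\sigma,-)\simeq\oV\otimes\varepsilon$, a simple $\DSn$-module. By Schur's Lemma each such map is either zero or injective. Since $\Sn_3$ acts transitively on the bases $\{x_{(ij)}\}$ of $V$ and $\{y_{(ij)}\}$ of $\oV$, it therefore suffices to show that $x_{(12)}\cdot\varepsilon_{-2}=0$ and $y_{(12)}\cdot\varepsilon_{-2}=0$.

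Both equalities are obtained by direct expansion using the coproducts
\[
\Delta(x_{(12)})=x_{(12)}\otimes1+(12)\otimes x_{(12)},\qquad \Delta(y_{(12)})=y_{(12)}\otimes1+\sum_{g\in\Sn_3}\sgn(g)\,\delta_g\otimes y_{g^{-1}(12)g}
\]
from \ref{item:comultiplication}, together with the tables in the appendix giving the action of $x_{(12)}$, $y_{(12)}$ and the generators of $\DSn$ on the basis $\{c_i\}$ of $\fL(\sigma,-)$. For each of the four summands one computes $x_{(12)}\cdot\varepsilon_{i,j}$ and $y_{(12)}\cdot\varepsilon_{i,j}$, lands in a (copy of) $(\sigma,-)$ in degree $-3$ and $-1$ respectively, and checks that the prescribed coefficients $-\zeta^2$, $1$, $1-\zeta^2$, $-(1-\zeta^2)$ are exactly those that make the four contributions cancel. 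The main obstacle is the bookkeeping in the $y_{(12)}$-computation: its coproduct has six summands, and the relevant $c_i$'s sit in different degrees of $\fL(\sigma,-)$ with non-trivial $\DSn$-behaviour, so one must track carefully how the $\delta_g$'s select pieces of the tensor factors. Once the tables are laid out, however, the verification reduces to a finite linear-algebra check over $\ku[\zeta]$ and the lemma follows.
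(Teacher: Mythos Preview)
Your proposal is correct and follows essentially the same approach as the paper: the paper's proof consists of the single sentence ``By explicit computations using the appendix, $x_{(12)}\varepsilon_{-2}=0=y_{(12)}\varepsilon_{-2}$.'' Your write-up is simply a more detailed unpacking of the same argument, making explicit why it suffices to check only $x_{(12)}$ and $y_{(12)}$ (Schur's Lemma plus the $\Sn_3$-transitivity on the generators) and sketching how the coproduct formulas and appendix tables feed into the verification.
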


\begin{proof}
By explicit computations using the appendix, $\xij{12}\varepsilon_{-2}=0=\yij{12}\varepsilon_{-2}$.
\end{proof}

On the other hand, the weight $\varepsilon$ of $\fA$ in degree $-2$ will be
\begin{align*}
\varepsilon'_{-2}&=18\zeta\varepsilon_{-1,-1,1}-6\zeta\varepsilon_{-1,-1,2}+6\varepsilon_{-2,0}+6\varepsilon_{0,-2}.
\end{align*}
The socle of $\fA$ will be generated by
\begin{itemize}
 \item $\fs=(\zeta c_7-c_5)\ot c_8-c_{10}\ot(\zeta c_7-c_5)
+\zeta^2(c_6-c_4)\ot c_{10}-\zeta^2c_8\ot(c_6-c_4)$.
\end{itemize}
Let $S$ be the $\DSn$-module generated by $\fs$.

\begin{lema}\label{le:fA' sigma menos sigma menos}
Let $\lambda$ be an homogeneous weight of $\bigl(\fL(\sigma,-)\otimes \fL(\sigma,-)\bigr)(\ell)$ and $\D\lambda$ denote the submodule generated by $\lambda$.
Hence
\begin{enumerate}[label=(\roman*)]
 \item $\D S\simeq\fL(\sigma,-)$ with highest-weight $S\simeq(\sigma,-)$.
 \smallskip
 \item If $\lambda\in\{\varepsilon,(e,\rho),(\tau,0)\}$ and $\ell=0$, then $\lambda$ is a highest-weight and $\D\lambda$ is an extension of $\fL(\lambda)$ by $\D S$.
 \smallskip
 \item\label{item:fA' -4} If $\lambda\in\{\varepsilon,(e,\rho),(\tau,0)\}$ and $\ell=-4$, then $\lambda$ is a lowest-weight and $\D\lambda$ is an extension of $\fL(\lambda)$ by $\D S$.
 \smallskip
 \item If $\lambda=\ku\varepsilon'_{-2}$, then $\D\lambda$ is an extension of $\fL(\lambda)$ by $\D S$.
 \smallskip
 \item\label{item:fA'} Let $\fA'$ be the sum of all above submodules. Then $\fA'$ is indecomposable with simple socle $\D S$.
\end{enumerate}
\end{lema}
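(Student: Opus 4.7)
The plan is to combine direct computation with the explicit bases in the appendix with structural arguments based on graded characters and the previously-established theory of Verma and co-Verma modules.

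For (i), I would first compute the $\DSn$-action on $\fs$ (which lies in degree $-1$) and check that $S = \DSn\cdot\fs$ is a three-dimensional weight isomorphic to $(\sigma,-)$, using the known action of the $\delta_{(ij)}$ and signs of transpositions. Since $\fs$ lies in the top degree of the eventual module $\D S$, I would then verify $\oV\cdot\fs=0$ (by Remark~\ref{obs:action maps} and transitivity of $\Sn_3$ on the $y_{(ij)}$, it suffices to check $\yij{12}\cdot\fs=0$). Hence $S$ is a highest weight and $\D S$ is a quotient of $\fM(\sigma,-)$; computing the successive images $V\cdot\fs$, $V\cdot V\cdot\fs$ explicitly reconstructs $\fL(\sigma,-)[-1]$ inside the tensor product.

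For (ii), each weight $\lambda\in\{\e,(e,\rho),(\tau,0)\}$ in degree $0$ is automatically a highest weight because degree $0$ is maximal in $\fL(\sigma,-)\otimes\fL(\sigma,-)$; so $\D\lambda$ is a quotient of $\fM(\lambda)$. By \cite[Theorems 9 and 10]{PV2}, the quotient of $\fM(\lambda)$ whose socle is $\fL(\sigma,-)[-1]$ is precisely $\fM(\lambda)/\soco\fM(\lambda)$, the distinguished extension recalled just before Definition~\ref{def:Est}. To rule out both the trivial extension and the full Verma module, I would compute one or two iterated applications of $V$ to $\lambda$ and observe that the result is a nonzero vector hitting the highest weight of $\D S$ constructed in (i), while the bottom composition factor of $\fM(\lambda)$ does not appear in the support of the tensor product. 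Part (iii) then follows from (ii) by self-duality: $\fL(\sigma,-)$ is self-dual, so is the tensor product, and duality exchanges degree $0$ with degree $-4$ and highest with lowest weights, all the involved simples being self-dual.

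For (iv), I would first check that $\ku\varepsilon'_{-2}$ is a weight isomorphic to $\e$ (i.e., annihilated by each $\delta_g$ with $g\ne e$ and fixed by all of $\Sn_3$). Computing $\xij{12}\cdot\varepsilon'_{-2}$ and $\yij{12}\cdot\varepsilon'_{-2}$ gives vectors in degrees $-3$ and $-1$; by the $\DSn$-equivariance of the action maps (Remark~\ref{obs:action maps}) and Schur's lemma, these images span, after $\DSn$-saturation, $\DSn$-weights of type $(\sigma,-)$. Comparing with the explicit basis of $\D S$ produced in (i) shows these weights coincide with the lowest and highest weights of $\D S$, so $\D S\subseteq\D(\ku\varepsilon'_{-2})$. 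Since $\e$ is not a weight of $\D S$ by the character of $\fL(\sigma,-)$, the element $\varepsilon'_{-2}$ is not in $\D S$; therefore $\D\lambda=\D S+\ku\varepsilon'_{-2}$ and the quotient is $\fL(\e)[-2]$.

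Finally, for (v), by (i)--(iv) each summand of $\fA'$ has simple socle $\D S$ and contains $\D S$, so $\D S\subseteq\soco\fA'$. To see equality, I would verify that any lowest-weight vector of $\fA'$ must live in $\D S$: by the character formula \eqref{eq:fL sigma menos x fL sigma menos} and (i), the only occurrences of $(\sigma,-)$ in degree $-3$ and of other weights killed by $V$ in $\fA'$ are exhausted by $\D S$; no additional simple submodule can arise. Simple socle then implies $\fA'$ is indecomposable. The main obstacle I expect is (iv): the choice of $\varepsilon'_{-2}$ is delicate, and I need to verify that the nontrivial images $V\cdot\varepsilon'_{-2}$ and $\oV\cdot\varepsilon'_{-2}$ land in the \emph{specific} copies of $(\sigma,-)$ forming $\D S$ rather than in the other copies of $(\sigma,-)$ present in the same degrees; a secondary difficulty is the accounting in (v), which requires carefully controlling the kernel of $V$ in each homogeneous component of $\fA'$.
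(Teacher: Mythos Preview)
Your plan for (i), (iv) and (v) is reasonable and close in spirit to the paper, but there are two genuine gaps.

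\textbf{Part (ii).} Your proposed way to rule out the full Verma module --- ``the bottom composition factor of $\fM(\lambda)$ does not appear in the support of the tensor product'' --- is simply false here. From \eqref{eq:fL sigma menos x fL sigma menos} the composition factors of $\fL(\sigma,-)\otimes\fL(\sigma,-)$ include $\fL(\varepsilon)[-4]$, $\fL(\tau,0)[-2]$ and $\fL(e,\rho)[-2]$, which are exactly the socles of $\fM(\varepsilon)$, $\fM(e,\rho)$ and $\fM(\tau,0)$ respectively. (You may be thinking of the situation in Lemma~\ref{le:submod fL e rho fL e rho}\ref{item: e rho 0 le:submod fL e rho fL e rho}, where that character argument does apply; it does not here.) The paper excludes the full Verma by explicit computation: for $\lambda=\varepsilon$ one checks $\xtop\cdot\varepsilon_0=0$, and for $\lambda\in\{(e,\rho),(\tau,0)\}$ one verifies that $V\cdot\lambda$ already lands in $S$, which forces $\D\lambda\subset\lambda+\D S$. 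Incidentally, the paper obtains (i) as a by-product of this analysis for $\varepsilon$: once $(1-\zeta)\xij{23}\cdot\varepsilon_0=\fs$ and $\xtop\cdot\varepsilon_0=0$ are known, the submodule lattice of $\fM(\varepsilon)$ from \cite[Theorem~8]{PV2} immediately gives $\D S\simeq\fL(\sigma,-)$, with no separate verification that $\yij{12}\cdot\fs=0$ needed.

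\textbf{Part (iii).} Your duality shortcut is not correct as stated: $\fL(e,\rho)$ and $\fL(\tau,0)$ are dual to one another, not self-dual, so ``all the involved simples being self-dual'' fails; and you must also check that the self-duality isomorphism $\fN\simeq\fN^*[-4]$ carries $\D S$ to itself, otherwise you do not know that the degree-$(-4)$ submodules have socle equal to the \emph{same} copy $\D S$. The paper avoids this by doing the mirror computation directly: the degree-$(-4)$ weights are lowest weights, one checks that a suitable element of $\oV\cdot\lambda$ equals $\xij{13}\xij{12}\xij{23}\cdot\varepsilon_0$ (the generator of the lowest weight of $\D S$), and then Lemma~\ref{le:coVerma} identifies the co-Verma quotient. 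A duality argument could be made to work, but it requires exactly these additional verifications, so it saves nothing.
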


\begin{proof}
By the fusion rules \cite[\S 2.5.4]{PV2}, the homogeneous weight $\varepsilon$ of degre zero is spanned by $\varepsilon_0=c_8\ot c_8+c_9\ot c_9+c_{10}\ot c_{10}$. Clearly, this is a 
highest-weight. Then $\D\varepsilon_0$ is a quotient of the Verma module $\fM(\varepsilon)$ via the morphism $\pi : \fM(\varepsilon) \longrightarrow \D\varepsilon_0$, $\pi(x\otimes 1)=x 
\cdot\varepsilon_0$ for all $x\in\BV(V)$. Using the appendix, we see that $(1-\zeta)\xij{23}\cdot\varepsilon_0=\fs$ and $\xtop\cdot\varepsilon_0=0$. By inspecting the quotients of 
$\fM(\varepsilon)$ in \cite[Theorem 8]{PV2}, we deduce (i) and (ii) for $\lambda=\varepsilon$.

The elements $t=c_8\ot c_8+\zeta^2c_9\ot c_9+\zeta c_{10}\ot c_{10}$ and $u=c_8\ot c_9+c_{10}\ot c_8 + c_9\ot c_{10}$ generate the highest-weights $(e,\rho)$ and $(\tau,0)$ in degree zero, 
respectively; again, this holds by the fusion rules \cite[\S 2.5.4]{PV2}. Then $\D t$ and $\D u$ are quotient of the Verma modules $\fM(e,\rho)$ and $\fM(\tau,0)$, respectively. We finish 
the proof of (ii) by noting that $V\cdot t$ and $V\cdot u$ are contained in $S$. In fact,
\begin{align*}
\fs=\frac{\zeta-1}{\zeta^2}\bigl(1-(23)\bigr)\xij{23}\cdot t=(\zeta-1)\bigl(1-(23)\bigr)\xij{12}\cdot u.
\end{align*}

(iii) The homogeneous weights $\varepsilon$, $(e,\rho)$ and $(\tau,0)$ of degree $-4$ are generated by 
\begin{align*}
\varepsilon_{-4}&=c_1\ot c_1+c_2\ot c_2 + c_3\ot c_3,\\
v&=c_1\ot c_1+\zeta^2c_2\ot c_2 +\zeta c_3\ot c_3\quad\mbox{and}\\
w&=c_1\ot c_2+c_3\ot c_1 + c_2\ot c_3,
\end{align*}
respectively, cf. \cite[\S 2.5.4]{PV2}. Clearly, these are lowest-weights and we have that 
\begin{align*}
\bigl(1-(12)\bigr)\yij{12}\cdot\varepsilon_{-4}=\bigl(1-(12)\bigr)\yij{12}\cdot v=\bigl(1-(12)\bigr)\yij{13}\cdot w.
\end{align*}
Moreover, this element is $\xij{13}\xij{12}\xij{23}\cdot\varepsilon_0$ which generates the lowest-weight of $\D S$ thanks to \cite[Theorem 8]{PV2}. This means that 
$\oV\cdot\varepsilon_{-4}$, $\oV\cdot v$ and $\oV\cdot w$ are contained in $\D S$. Hence (iii) follows from Lemma \ref{le:coVerma}.

\

(iv) We have that 
\begin{align*}
\xij{12}\cdot\varepsilon'_{-2}=(1-\zeta)\xij{13}\xij{12}\xij{23}\cdot\varepsilon_0\quad\mbox{and}\quad
\yij{12}\cdot\varepsilon'_{-2}=(13)\fs
\end{align*}
belong in $\D S$. Therefore $\D \varepsilon'_{-2}=\ku\varepsilon'_{-2}\oplus\D S$ as $\DSn$-modules and (iv) follows.

(v) is a direct consequence of the above.
\end{proof}

By \eqref{eq:fL sigma menos x fL sigma menos} and Remark \ref{obs:sobre los subcocientes}, there is a graded submodule $\fN$ such that 
$$\fL(\sigma,-)\ot\fL(\sigma,-)\simeq\fL(\tau,1)\oplus\fL(\tau,2)\oplus\fN.$$
Notice that $\ku\varepsilon_{-2}$ and $\fA'$ are submodules of $\fN$ such that $\ku\varepsilon_{-2}\cap\fA'=0$ and
$\chgr\fN=\chgr\fA'+t^{-2}\chgr\fL(\varepsilon)+t^{-1}\chgr\fL(\sigma,-)$. 

\begin{lema}\label{le:fA}
Let $\lambda=(\sigma,-)$ be an homogeneous weight of degree $-1$ or $-3$ which is not contained in $\D S$. Hence $\D\lambda\supset\fA'$ and $\D\lambda/\fA'\simeq\fL(\sigma,-)$.
\end{lema}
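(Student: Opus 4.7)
The plan is to split the proof into two stages. In the first, I use a character computation to identify $\fA/\fA'$ with $\fL(\sigma,-)[-1]$ and show that the image of $\lambda$ generates this quotient. In the second, I verify explicitly that the generators of $\fA'$ from Lemma \ref{le:fA' sigma menos sigma menos} all lie in $\D\lambda$, so that $\D\lambda = \fA$ and hence $\D\lambda/\fA' \simeq \fL(\sigma,-)$.

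For the first stage, a term-by-term comparison of the character of $\fA$ given in Proposition \ref{prop: sigma menos por sigma menos} against the structure of $\fA'$ from Lemma \ref{le:fA' sigma menos sigma menos} yields
\begin{equation*}
\chgr\fA - \chgr\fA' = \chgr\fL(\sigma,-)[-1].
\end{equation*}
Since $\{\chgr\fL(\mu)\}_{\mu \in \Lambda}$ is a $\Z[t,t^{-1}]$-basis of $R^{\bullet}$, the composition factors of $\fA/\fA'$ coincide with those of $\fL(\sigma,-)[-1]$, forcing $\fA/\fA' \simeq \fL(\sigma,-)[-1]$. From $\chgr\fA'$ I read off that the unique copy of $(\sigma,-)$ at degree $-1$ inside $\fA'$ lies in $\D S$, so $\lambda \cap \fA' = 0$ by Schur's lemma, and the image of $\lambda$ in $\fA/\fA'$ is a non-zero copy of $(\sigma,-)$ at degree $-1$; this must be the highest-weight of $\fL(\sigma,-)[-1]$ and therefore generates the quotient, yielding $\D\lambda + \fA' = \fA$. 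By the second isomorphism theorem $\D\lambda / (\D\lambda \cap \fA') \simeq \fL(\sigma,-)[-1]$, and to conclude it suffices to show $\fA' \subseteq \D\lambda$.

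In the second stage I follow the strategy of \S\ref{subsec:key tools}. By Remark \ref{obs:action maps} the maps $\oV \otimes \lambda \to \fA(0)$, $V \otimes \lambda \to \fA(-2)$, and the iterated action of the degree $-3$ component of $\BV(V)$ on $\lambda$ (landing in $\fA(-4)$) are morphisms of $\DSn$-modules, and the fusion rules identify each source with $\varepsilon \oplus (e,\rho) \oplus (\tau,0)$. The targets $\fA(0)$ and $\fA(-4)$ each contain exactly one copy of each of these three weights, while $\fA(-2)$ contains exactly one copy of $\varepsilon$ (the second copy at that degree living in the summand $\fL(\varepsilon)[-2]$ of $\fN$, disjoint from $\fA$). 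By Schur each restriction is either zero or an isomorphism onto its image; I would verify non-vanishing on every isotypic component by explicit computation in the basis of the appendix, choosing a concrete representative of $\lambda$ at degree $-1$ linearly independent from the highest-weight of $\D S$. Once this is done, the images coincide with $\varepsilon_0, t, u$ at degree $0$, with $\varepsilon_{-4}, v, w$ at degree $-4$, and with $\varepsilon'_{-2}$ at degree $-2$ up to non-zero scalars, giving
\begin{equation*}
\D\lambda \supseteq \D\varepsilon_0 + \D t + \D u + \D\varepsilon_{-4} + \D v + \D w + \D\varepsilon'_{-2} = \fA'
\end{equation*}
by Lemma \ref{le:fA' sigma menos sigma menos}. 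Combined with the first stage this establishes the lemma. The case $\lambda$ at degree $-3$ is handled analogously by reversing the roles of $V$ and $\oV$.

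The main obstacle will be the Schur-lemma verification in the second stage: one must pick a concrete $\lambda$, spell it out in the appendix's basis, and compute the actions of $y_{(ij)}$, $x_{(ij)}$, and a suitable degree-$3$ element of $\BV(V)$ on it to check that each isotypic component of the image is indeed non-zero, as this does not follow from the characters alone. The computations mirror those already performed for $\D\varepsilon_0, \D t, \D u$ in Lemma \ref{le:fA' sigma menos sigma menos}, using the comultiplication formulas of \S\ref{subsec:lo basico}\ref{item:comultiplication} and the actions recorded in the appendix.
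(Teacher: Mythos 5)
The paper's argument is genuinely different from yours: it exploits the self-duality $\fN\simeq\fN^*[-4]$ together with the antipode pairing and Lemma \ref{le:fA' sigma menos sigma menos}\ref{item:fA' -4} to obtain $\oV\cdot N=\fA'(0)$ and $V\cdot\widetilde{N}=\fA'(-4)$ without any basis computation; you instead plan direct computations in the appendix's basis. That is legitimate but more laborious, and you only sketch the plan rather than carry it out. A minor (repairable) problem is circularity: your Stage~1 quotes $\chgr\fA$ from Proposition \ref{prop: sigma menos por sigma menos}, but that proposition is \emph{deduced} from the present lemma (its proof sets $\fA=\D\lambda$); the legitimate starting point is the identity $\chgr\fN=\chgr\fA'+t^{-2}\chgr\fL(\varepsilon)+t^{-1}\chgr\fL(\sigma,-)$ recorded just before the lemma.

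The essential gap is at degree $-2$. You reason about the $\DSn$-map $V\ot\lambda\to\fA(-2)$ and argue that $\fA(-2)$ contains exactly one copy of $\varepsilon$. But at this point $\fA$ has not been constructed; the actual codomain is $\fN(-2)$, whose $\varepsilon$-isotypic component is two-dimensional, spanned by $\varepsilon_{-2}$ (which generates $\fL(\varepsilon)[-2]$) and $\varepsilon'_{-2}$ (inside $\fA'$). Schur's lemma only says that the image of the $\varepsilon$-summand of $V\ot\lambda$ is zero or a line; it does not say which line. If that line met $\varepsilon_{-2}$ non-trivially, then $\D\lambda$ would contain $\fL(\varepsilon)[-2]$ and $\D\lambda/\fA'$ would have two composition factors, contradicting the lemma --- so excluding this is exactly the content of the lemma, not a consequence of Schur. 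The paper handles it structurally: inside $\D\lambda/\D\fA'(0)$ the weight $\lambda$ becomes a highest weight, so this quotient is a quotient of $\fM(\sigma,-)[-1]$, which carries only two copies of $\fL(\varepsilon)$; this forces the candidate $\varepsilon$-lines $\mu_1,\mu_2$ to coincide, and a single computation with the element $z$ identifies them with $\ku\varepsilon'_{-2}$. To make your proof rigorous you would need either to carry out the degree~$-2$ computation in full and verify that the resulting line is $\ku\varepsilon'_{-2}$ (not merely nonzero), or to import a structural argument of the above kind.
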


\begin{proof}
Since $\fL(\sigma,-)$, $\fL(\tau,1)$ and $\fL(\tau,2)$ are self-dual, so is $\fN$. Moreover, as graded modules $\fN\simeq\fN^*[-4]$.

If $\lambda$ is of degree $-1$, then the space of weights $(\sigma,-)$ in $\fN(-1)$ is $N=\lambda\oplus S$. We claim that $\oV\cdot N=\fN(0)=\fA'(0)$. In fact, let $\mu$ be a weight 
of $\fN(0)$ and $\mu^*\subset(\fN(0))^*$ the dual space of $\mu$. We see that
\begin{align*}
\langle\mu^*,\oV\cdot N\rangle=\langle\mu^*,\oV\DSn\cdot N\rangle=\langle\cS(\oV\DSn)\cdot\mu^*,N\rangle=\langle\oV\cdot\mu^*,N\rangle\neq0, 
\end{align*}
and it is non-zero because $\fN\simeq\fN^*[-4]$ and Lemma \ref{le:fA' sigma menos sigma menos} \ref{item:fA' -4}.

In a similar way, we can show that $V\cdot\widetilde{N}=\fN(-4)=\fA'(-4)$ where $\widetilde{N}$ is the space of weights $(\sigma,-)$ in $\fN(-3)$. Also, we can show that $V\cdot N$ has 
a weight $\mu_1\simeq\varepsilon$ and $\oV\cdot\widetilde{N}$ has a weight $\mu_2\simeq\varepsilon$, both weights are of degree $-2$. 

We claim that $\mu_1=\mu_2$. Indeed, the space of weights 
$\varepsilon$ of $\D\lambda/\D\fA'(0)$ is $\mu_1+\mu_2+\ku\varepsilon_{-4}$ where $\ku\varepsilon_{-4}$ is the trivial weight of $\fA'(-4)$. On the other hand, $(\sigma,-)$ is a 
highest-weight generating $\D\lambda/\D\fA'(0)$ and hence $\D\lambda/\D\fA'(0)$ is a quotient of $\fM(\sigma,-)$. As $\fM(\sigma,-)$ has only two copies of $\varepsilon$ we deduce that 
$\mu_1=\mu_2$.

Finally, the element
\begin{align*}
z=3(c_4\ot c_2+c_5\ot c_3)+2(\zeta-1)(c_3\ot c_6+c_2\ot c_7)+(4\zeta^2-\zeta)(c_2\ot c_5+c_3\ot c_4)
\end{align*}
belongs in a weight $(\sigma,-)$ in $\fA'(-3)$ by the fusion rules. Moreover, we have that 
$$\varepsilon'_{-2}=\bigr(1+(13)+(23)\bigl)\yij{12}\cdot z.$$
Therefore $\ku\varepsilon_{-2}'=\mu_1=\mu_2$. This finishes the proof.
\end{proof}

\begin{proof}[Proof of Proposition \ref{prop: sigma menos por sigma menos}]
Let $\lambda$ be as in Lemma \ref{le:fA}. Then $\fA=\D\lambda$ satisfies the properties of the statement by Lemmas \ref{le:Le en sigma - sigma -}, \ref{le:fA' sigma menos sigma menos} 
and \ref{le:fA}.
\end{proof}

\subsection{The case \texorpdfstring{$\fL(e,\rho)\otimes \fL(e,\rho)$}{s}}

As in \eqref{eq:fL tau 0 x fL e rho} we can see that
\begin{align}\label{eq:fL e rho x fL e rho}
\chgr&\left(\fL(e,\rho)\otimes \fL(e, \rho)\right)=\\
\notag=&\chgr\fL(e,-)+(1+t^{-2}+t^{-4})\chgr\fL(\varepsilon)+(1+t^{-2})\chgr\fL(e, \rho)+2t^{-1}\fL(\sigma,-).
\end{align}
Therefore $\fL(e,-)$ is a graded direct summand of $\fL(e,\rho)\otimes \fL(e,\rho)$. 

\begin{prop}\label{prop:fB}
Let $\fB$ be a graded complement of $\fL(e,-)$. Then $\fB$ is indecomposable and
$$\fL(e,\rho)\otimes \fL(e,\rho)\simeq\fL(e,-)\oplus\fB$$
as graded modules. Moreover,
\begin{align*}
\operatorname{soc}\fB&=\fH\simeq t^{-1}\fL(\sigma,-),\\
\soco^2\fB/\soco\fB&\simeq (1+t^{-2}+t^{-4})\fL(\varepsilon)\oplus(1+t^{-2})\fL(e, \rho),\\
\soco^3\fB/\soco^2\fB&\simeq t^{-1}\fL(\sigma,-),\\
\soco^3\fB&=\fB.
\end{align*}

\end{prop}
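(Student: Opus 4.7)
The plan is to closely mimic the proof of Proposition \ref{prop: sigma menos por sigma menos}. First, from \eqref{eq:fL e rho x fL e rho} together with the fact that $(e,-)\in\Lambda_{sp}$, Remark \ref{obs:sobre los subcocientes}(ii) immediately exhibits $\fL(e,-)$ as a graded direct summand, and $\fB$ is the graded complement. Its graded character is obtained by subtracting $\chgr\fL(e,-)$ from \eqref{eq:fL e rho x fL e rho}. The remaining task is to establish the internal structure of $\fB$.

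Following Section \ref{subsec:key tools}, I would analyze the submodules generated by each homogeneous weight of $\fB$, starting from the extremes of the grading. In degree $0$, the fusion rules \cite[\S 2.5.4]{PV2} pick out one weight of type $\varepsilon$ and one of type $(e,\rho)$, written explicitly as linear combinations of $a_i\otimes a_j$ in the basis of $\fL(e,\rho)$ from the appendix. These are highest-weights, so the submodules they generate are quotients of $\fM(\varepsilon)$ and $\fM(e,\rho)$ respectively. By computing $\xij{ij}\cdot\varepsilon$ and $\xij{ij}\cdot(e,\rho)$, I would produce a common element in degree $-1$ generating a simple submodule $\fH\simeq\fL(\sigma,-)[1]$; together with the lattice of quotients in \cite[Theorems 8 and 9]{PV2}, this forces each generated submodule to be an extension of its simple head by $\fH$. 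Symmetrically, using Lemma \ref{le:coVerma}, the lowest-weights of types $\varepsilon$ and $(e,\rho)$ in degree $-4$ generate co-Verma-type extensions also having socle $\fH$.

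The middle degree $-2$ carries two copies of $\varepsilon$ and one of $(e,\rho)$. One copy of $\varepsilon$ is already accounted for by the above Verma and co-Verma quotients; the remaining $\varepsilon$-weight and the $(e,\rho)$-weight require, as in Lemma \ref{le:fA' sigma menos sigma menos}(iv), an explicit linear combination (analogous to $\varepsilon'_{-2}$) whose $V$- and $\oV$-actions land in $\fH$, producing extensions by $\fH$. Let $\fB'$ be the sum of all these submodules; since they share the common simple socle $\fH$, $\fB'$ is indecomposable with $\operatorname{soc}\fB'=\fH$ and accounts for every composition factor of $\fB$ except one copy of $\fL(\sigma,-)[1]$.

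Finally, to produce $\fL(\sigma,-)[1]$ at the top, I would pick a weight of type $(\sigma,-)$ in degree $-1$ (or $-3$) lying outside $\fH$. Arguing exactly as in Lemma \ref{le:fA}, the $\DSn$-equivariance of the action maps (Remark \ref{obs:action maps}) together with the character count from \eqref{eq:fL e rho x fL e rho} forces $\D\lambda\supseteq\fB'$ with $\D\lambda/\fB'\simeq\fL(\sigma,-)[1]$, so $\fB=\D\lambda$ is indecomposable with the stated socle filtration. The main obstacle, absent the self-duality that simplified the proof for $\fA$ (since $\fL(e,\rho)^*\simeq\fL(\tau,0)\not\simeq\fL(e,\rho)$), is the middle-degree analysis: I must check directly, rather than by duality, that the extensions built from degrees $0$ and $-4$ produce the same copy of $\fH$ and that the degree-$(-2)$ extensions glue compatibly, analogous to the identification $\mu_1=\mu_2$ obtained in the proof of Lemma \ref{le:fA}.
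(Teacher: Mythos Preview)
Your overall strategy matches the paper's: the paper also defers to a supporting lemma (Lemma~\ref{le:submod fL e rho fL e rho}) which identifies an explicit element $\fh\in\fB(-1)$ generating the simple socle $\fH\simeq\fL(\sigma,-)[-1]$, shows that each homogeneous weight of type $\varepsilon$ or $(e,\rho)$ generates an extension of the corresponding simple by $\fH$, and then shows that a weight $(\sigma,-)\neq\DSn\fh$ in degree $-1$ generates all of $\fB$.

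However, your proposal contains several factual errors about the weight structure of $\fB$ that would derail the execution. First, the basis of $\fL(e,\rho)$ in the appendix is $\{b_i\}$, not $\{a_i\}$. More importantly, in degree $-4$ the weights present are $\varepsilon$ and $(\tau,0)$ (the latter being the lowest-weight of the composition factor $\fL(e,\rho)[-2]$), not $\varepsilon$ and $(e,\rho)$ as you write. Likewise, degree $-2$ carries exactly \emph{one} copy of $\varepsilon$, not two: reading off $\chgr\fB$ one finds $\varepsilon$, $(e,\rho)$, $(\tau,0)$, and two copies each of $(\tau,1),(\tau,2)$ in $\fB(-2)$.

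This last miscount leads you to expect a gluing problem analogous to the identification $\mu_1=\mu_2$ in Lemma~\ref{le:fA}, and to worry about the absence of self-duality. In fact the situation here is \emph{simpler} than for $\fA$: since there is a unique $\varepsilon$-weight in degree $-2$, no such identification is needed. The paper handles it by the one-line computation $\yij{12}\cdot\fm=\fh$ and $\xij{12}\cdot\fm\in\fH$ for $\fm=b_3\ot b_3+b_4\ot b_4+b_5\ot b_5$. Similarly, the final step---showing that a second $(\sigma,-)$-weight generates $\fB$---is done by direct computation of $\yij{12}\cdot\fu$ and of $\xij{ij}\xij{kl}\xij{mn}\cdot\fu$, with no duality argument required.
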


\begin{proof}
By Remark \ref{obs:sobre los subcocientes}, there exists $\fB$ such that $\fL(e,\rho)\otimes \fL(e,\rho)\simeq\fL(e,-)\oplus\fB$. We will show in Lemma \ref{le:submod fL e rho fL e rho} that 
such a $\fB$ satisfies the required properties.
\end{proof}

Figure \ref{fig:fB} helps the reader to visualize the module $\fB$ and to follow the proof of the next lemma.

\begin{figure}[h]
\begin{center}
\includegraphics{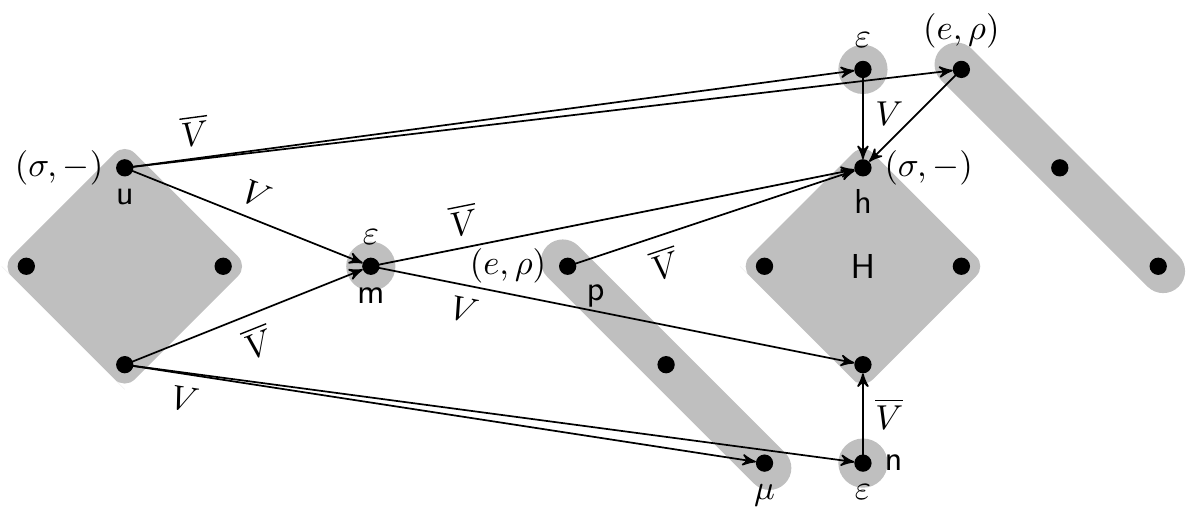}
\caption{The dots represent the weights of $\fB$. Each shadow region correspond to a composition factor whose highest-weight is in the top. The actions of $V$ and $\oV$ are illustrated by 
the arrows.}\label{fig:fB}
\end{center}
\end{figure}

We define the elements $\fh,\fh'\in\fB(-1)$ by
\begin{align*}
\fh=b_4\otimes (b_7-b_6)-(b_7-b_6)\otimes b_4\quad\mbox{and}\quad\fh'=b_4\ot b_7-b_4\ot b_6.
\end{align*}
Using the fusion rule \cite[(15)]{PV2} we obtain that $\DSn\fh\simeq\DSn\fh'\simeq(\sigma,-)$. Moreover, the space of weights $(\sigma,-)$ of $\fB(-1)$ is $\DSn\fh\oplus\DSn\fh'$ 
by \eqref{eq:fL e rho x fL e rho}.

Let $\fH$ be the submodule generated by $\fh$. It is a highest-weight module since $\yij{12}\fh=0$, which is a straightforward computation using the appendix.

\begin{lema}\label{le:submod fL e rho fL e rho}
Let $\lambda$ be a homogeneous weight of $\fB(\ell)$ and $\D\lambda$ denote the submodule generated by $\lambda$.
\begin{enumerate}[label=(\roman*)]
 \item\label{item: H le:submod fL e rho fL e rho}  $\fH\simeq\fL(\sigma,-)$.
 \smallskip
 \item\label{item: e 0 le:submod fL e rho fL e rho} If $\lambda=\varepsilon$ and $\ell=0$, then $\lambda$ is a highest-weight and $\D\lambda$ is an extension of $\fL(\varepsilon)$ by $\fH$.
  \smallskip
 \item\label{item: e -2 le:submod fL e rho fL e rho} If $\lambda=\varepsilon$ and $\ell=-2$, then $\D\lambda$ is an extension of $\fL(\varepsilon)$ by $\fH$.
 \smallskip
 \item\label{item: e -4 le:submod fL e rho fL e rho} If $\lambda=\varepsilon$ and $\ell=-4$, then $\lambda$ is a lowest-weight and $\D\lambda$ is an extension of $\fL(\varepsilon)$ by $\fH$.
 \smallskip
 \item\label{item: e rho 0 le:submod fL e rho fL e rho} If $\lambda=(e,\rho)$ and $\ell=0$, then $\lambda$ is a highest-weight and $\D\lambda$ is an extension of $\fL(e,\rho)$ by $\fH$.
 \smallskip
 \item\label{item: e rho -2 le:submod fL e rho fL e rho} If $\lambda=(e,\rho)$ and $\ell=-2$, then $\D\lambda$ is is an extension of $\fL(e,\rho)$ by $\fH$.
 \smallskip
 \item\label{item: sigma - le:submod fL e rho fL e rho} If $\lambda=(\sigma,-)\neq\DSn\fh$ and $\ell=-1$, then $\fB=\D\lambda$.
\end{enumerate}
\end{lema}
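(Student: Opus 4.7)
My plan follows the general strategy of \S\ref{subsec:key tools}: analyze the submodule lattice of $\fB$ degree by degree, starting from the extremal homogeneous components $\fB(0)$ and $\fB(-4)$ where the submodule generated by a weight is a quotient of a Verma or co-Verma module, and then propagate to intermediate degrees using the fact (Remark \ref{obs:action maps}) that the $V$- and $\oV$-actions are $\DSn$-linear. The graded character \eqref{eq:fL e rho x fL e rho} together with the fusion rules from \cite[\S 2.5.4]{PV2} provides the combinatorial framework.

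For (i), $\yij{12}\fh=0$ makes $\fh$ a highest-weight, so $\fH$ is a graded quotient of $\fM(\sigma,-)$. Since \eqref{eq:fL e rho x fL e rho} allows only two composition factors of type $\fL(\sigma,-)$ in $\fB$, whereas every non-simple quotient of $\fM(\sigma,-)$ has at least two by \cite[Theorem 10]{PV2}, we conclude $\fH\simeq\fL(\sigma,-)$. For (ii), (iv) and (v), I would identify the explicit highest- or lowest-weight generator of the relevant isotypic component of $\fB(0)$ or $\fB(-4)$ using the fusion rules, verify its extremal character directly from the appendix, compute one or two steps of the $V$- or $\oV$-action, and match the result against a non-zero element of $\fH$. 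The submodule lattices of $\fM(\lambda)$ and $\fW(\lambda)$ from \cite[\S 4]{PV2} and Lemma \ref{le:coVerma} then pin down $\D\lambda$ to a length-two extension of $\fL(\lambda)$ by $\fH$.

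For the intermediate items (iii) and (vi), Remark \ref{obs:action maps} shows $V\cdot\lambda\subseteq\fB(-3)$ and $\oV\cdot\lambda\subseteq\fB(-1)$ are images of $\DSn$-morphisms. Decomposing $V\ot\lambda$ and $\oV\ot\lambda$ via the fusion rules and matching against the composition factors of $\fB(-3)$ and $\fB(-1)$, Schur's Lemma restricts the non-zero components to the $(\sigma,-)$-isotypic parts, and explicit computation using the appendix confirms the image lies in $\fH$. By Remark \ref{obs:sobre los subcocientes}(iv), the unique simple quotient of $\D\lambda$ is $\fL(\lambda)$, and since $\D\lambda\cap\fH\neq0$ and $\fB$ has only one composition factor of type $\fL(\lambda)$ in the relevant degree, character counting forces $\D\lambda$ to be the claimed length-two extension.

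The main obstacle is (vii). Since the $(\sigma,-)$-isotypic component of $\fB(-1)$ is precisely $\DSn\fh\oplus\DSn\fh'$, any $\lambda\neq\DSn\fh$ has non-trivial projection onto $\DSn\fh'$; because $\fh$ is a highest-weight we have $\oV\cdot\DSn\fh=0$, so $\oV\cdot\lambda$ coincides (up to scalar) with $\oV\cdot\DSn\fh'$. The plan is to check by direct computation with the appendix that $\oV\cdot\DSn\fh'\subseteq\fB(0)$ has \emph{non-zero} projection onto both the $\varepsilon$- and $(e,\rho)$-isotypic parts of $\fB(0)$; consequently $\D\lambda$ contains the highest-weight generators of (ii) and (v), hence the submodules described there, and iterating the $V$- and $\oV$-actions in conjunction with (iii), (iv) and (vi) exhausts $\fB$. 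The delicate point is the non-vanishing of \emph{both} projections simultaneously: if either vanished, $\DSn\fh\oplus\DSn\fh'$ would generate a direct summand incompatible with the subsequent indecomposability claim for $\fB$, but since indecomposability is itself part of what is ultimately being established, this must be verified by explicit computation rather than invoked abstractly.
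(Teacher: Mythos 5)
Your proposal follows the same overall strategy as the paper's proof (and indeed the general strategy of \S\ref{subsec:key tools}): locate the relevant weight in each homogeneous degree via the fusion rules, use the extremal degrees to land in quotients of Verma or co-Verma modules, propagate through intermediate degrees by $\DSn$-linearity of the action maps, and close up (vii) by showing the submodule generated by $\lambda$ swallows all the pieces constructed in (i)--(vi). For (ii)--(vi) and (vii) the plan is sound and matches what the paper does.

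The one place where you diverge from the paper, and where the argument as stated does not close, is (i). You argue that $\fH\simeq\fL(\sigma,-)$ because ``every non-simple quotient of $\fM(\sigma,-)$ has at least two composition factors of type $\fL(\sigma,-)$'' while $\fB$ has only two. But the conclusion does not follow from this as phrased: if $\fH$ had two such factors, it would not yet contradict $\fB$ having exactly two, unless you also invoke the \emph{graded} composition data --- the two copies in $\fB$ both sit in degree $-1$, whereas a non-simple graded quotient of $\fM(\sigma,-)[-1]$ would place the second copy in degree $-3$. Even with that repair, the intermediate claim about quotients of $\fM(\sigma,-)$ is not obviously what \cite[Theorem 10]{PV2} asserts (that result is invoked in the paper for $\fM(\varepsilon)$ and $\fM(e,\rho)$); one would need to look at the actual submodule lattice of $\fM(\sigma,-)$ from \cite[Theorem 7]{PV2}, and it is not clear that every length-two quotient carries a second $\fL(\sigma,-)$ rather than, say, an $\fL(\varepsilon)$ or $\fL(e,\rho)$ factor, which \emph{do} occur in $\fB$. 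The paper avoids this entirely by obtaining (i) as a byproduct of (ii): once the degree-$0$ weight $\varepsilon$ is shown to generate a length-two module $\D\lambda$ that is a quotient of $\fM(\varepsilon)$ with composition factors $\fL(\varepsilon)$ and $\fL(\sigma,-)[-1]$, the submodule $\fH\subset\D\lambda$ has no choice but to be $\fL(\sigma,-)[-1]$. I would recommend replacing your counting argument for (i) with this derivation, which you are already set up to carry out once you compute (ii).
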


\begin{proof}
Assume that $\lambda=\varepsilon$ and $\ell=0$. A basis of $\lambda$ is $b_6\ot b_7+b_7\ot b_6$ by \cite[\S 2.5.4]{PV2}. Clearly, $\lambda$ is a highest-weight. Then $\D\lambda$ is a quotient of the Verma module $\fM(\varepsilon)$ via the morphism $\pi : \fM(\varepsilon) \longrightarrow \D\lambda$, $\pi(x\otimes 1)=x \cdot(b_6\ot b_7+b_7\ot b_6)$ for all $x\in\BV(V)$. Recall the quotients of $\fM(\varepsilon)$ from \cite[Theorem 8]{PV2}. Since $\xij{12}\cdot(b_6\ot b_7+b_7\ot b_6)=\fh$ and $\xtop\cdot(b_6\ot b_7+b_7\ot b_6)=0$, $\D\lambda$ fits in an exact sequence $\fL(\sigma,-)\longrightarrow\D\lambda\longrightarrow\fL(\varepsilon)$ by \cite[Theorem 10]{PV2}. Since $\fH$ is a submodule of $\D\lambda$ we deduce that $\fH\simeq\fL(\sigma,-)$ and \ref{item: H le:submod fL e rho fL e rho} and \ref{item: e 0 le:submod fL e rho fL e rho} follow.

\

In case \ref{item: e -2 le:submod fL e rho fL e rho}, $\fm=b_3\ot b_3+b_4\ot b_4+b_5\ot b_5$ is a basis of $\lambda$, cf. \cite[\S 2.5.4]{PV2}. Then we see that $\yij{12}\cdot\fm=\fh$ and $\xij{12}\cdot\fm=(13)\xij{13}\xij{23}\cdot\fh$.

\

For \ref{item: e -4 le:submod fL e rho fL e rho}, $\fn=b_1\ot b_2+b_2\ot b_1$ is a basis of $\lambda$ and we have that $\xij{12}\cdot\fn=0$ and $\yij{12}\cdot\fn=(13)\xij{13}\xij{23}\cdot\fh$.

\

\ref{item: e rho 0 le:submod fL e rho fL e rho} A basis of $\lambda$ is formed by $b_6\ot b_6$ and $b_7\ot b_7$. Clearly $\lambda$ is a highest-weight. Then $\D\lambda$ is a quotient of the 
Verma module $\fM(e,\rho)$. Let $\pi : \fM(e,\rho) \longrightarrow \D\lambda$ be the induced morphism, which is analogous to that in the case (i). Since $\fB$ has no composition factors 
isomorphic to $\fL(\tau,0)$ by \eqref{eq:fL e rho x fL e rho}, we deduce that the socle of $\fM(e,\rho)$ is contained in $\ker\pi$, see \cite[Theorem 10]{PV2}. Finally, we have that 
$\pi(\fe_0)=-\fh$, cf. \cite[\S 4.6]{PV2}. 

\ 

\ref{item: e rho -2 le:submod fL e rho fL e rho} In this case $\D\lambda$ has a composition factor isomorphic to $\fL(e,\rho)[-2]$ by Remark \ref{obs:sobre 
los subcocientes}. Since $\chgr\fL(e,\rho)\simeq(e,\rho)+t^{-1}(\sigma,-)+t^{-2}(\tau,0)$, $\BV^2(V)\lambda$ contains the unique weight $\mu=(\tau,0)$ of $\fB(-4)$ and $\D\mu=\D\lambda$. 
Notice that $\mu$ is a lowest-weight. Hence, $\D\lambda$ is a quotient of the co-Verma module $\fW(\tau,0)$. We have that $\fp=\zeta^2b_3\ot b_3+b_4\ot b_4+\zeta b_5\ot b_5$ 
belongs in $\lambda$ and $\yij{12}\cdot\fp=\fh$. Then $\D\lambda$ is an extension of $\fL(\lambda)$ by $\fH$ thanks to Lemma \ref{le:coVerma}. 

\

\ref{item: sigma - le:submod fL e rho fL e rho} Let  $\fu=s\fh+t\fh'$ be a generator of $\lambda$ for some $s,t\in\ku$ with $t\neq0$, that is $\DSn\fu=\lambda$. Then, the next elements are 
linearly independent:
\begin{align*}
\yij{12}\cdot\fu&=-t(b_7-b_6)\ot(b_7-b_6),\\ 
(13)\yij{12}\cdot\fu&=-t(\zeta b_7-\zeta^2b_6)\ot(\zeta b_7-\zeta^2b_6),\\ 
(23)\yij{12}\cdot\fu&=-t(\zeta^2 b_7-\zeta b_6)\ot(\zeta^2 b_7-\zeta b_6)
\end{align*}
Hence $\oV\cdot\lambda$ coincides with the $\DSn$-submodule $(e,\rho)\oplus\varepsilon$ contained in $\fB(0)$ by the fusion rules. Therefore the submodules in \ref{item: H le:submod fL e 
rho fL e rho}, \ref{item: e 0 le:submod fL e rho fL e rho} and \ref{item: e rho 0 le:submod fL e rho fL e rho} are contained in $\D\lambda$.

On the other hand, 
\begin{align*}
\xij{23}\xij{12}\xij{13}\cdot\fu=t\fn\quad\mbox{and}\quad\xij{13}\xij{12}\xij{13}\cdot\fu=-2t\,b_1\ot b_1,
\end{align*}
where the second element belongs in the weight $(\tau,0)$ of $\fB(-4)$ by the fusion rules. Hence $\D\lambda$ contains the submodules in \ref{item: e 
-4 le:submod fL e rho fL e rho} and \ref{item: e rho -2 le:submod fL e rho fL e rho}.

Finally,  $\bigl(1+(13)+(23)\bigr)\xij{12}\cdot\fu=-t\fm$. Then $\chgr\D\lambda=\chgr\fB$ and \ref{item: sigma - le:submod fL 
e rho fL e rho} follows. 
\end{proof}

\subsection{The case \texorpdfstring{$\fL(\sigma,-)\otimes \fL(e,\rho)$}{s}}

In $K[t,t^{-1}]$ it holds that
\begin{align}\label{eq:fL sigma menos x fL e rho}
\chgr&\left(\fL(\sigma,-)\otimes \fL(e, \rho)\right)=\chgr\fL(\sigma,+)+2t^{-1}\chgr\fL(\tau,0)+(1+t^{-2})\chgr\fL(\sigma,-).
\end{align}
Therefore $\fL(\sigma,+)$ is a graded direct summand of $\fL(\sigma,-)\otimes \fL(e,\rho)$. 

\begin{prop}\label{prop:fC}
Let $\fC$ be a graded complement of $\fL(\sigma,+)$. Then $\fC$ is indecomposable, $\chgr\fC=2t^{-1}\chgr\fL(\tau,0)+(1+t^{-2})\chgr\fL(\sigma,-)$ and
$$\fL(\sigma,-)\otimes \fL(e,\rho)\simeq\fL(\sigma,+)\oplus\fC$$
as graded modules. Moreover, the socle filtration of $\fC$ satisfies
\begin{align*}
\soco\fC&\simeq t^{-1}\fL(\tau,0),\\
\soco^2\fC/\soco\fC&\simeq(1+t^{-2})\fL(\sigma,-),\\
\soco^3\fC/\soco^2\fC&\simeq t^{-1}\fL(\tau,0),\\
\soco^3\fC&=\fC.
\end{align*}

\end{prop}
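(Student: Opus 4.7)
The approach is parallel to the proof of Proposition \ref{prop:fB}, and Figure \ref{fig:fB} suggests the analogous picture for $\fC$. First, I would compute $\chgr(\fL(\sigma,-)\otimes\fL(e,\rho))$ by multiplying $\chgr\fL(\sigma,-)$ and $\chgr\fL(e,\rho)$ in the ring $K[t,t^{-1}]$ using the fusion rules recalled in \cite[\S 5.2.4]{PV2} and Lemma \ref{le:chgr es morf de anillos}; this yields \eqref{eq:fL sigma menos x fL e rho}. Since $(\sigma,+)\in\Lambda_{sp}$, the simple module $\fL(\sigma,+)$ is projective, so by Remark \ref{obs:sobre los subcocientes}(ii) it splits off as a graded direct summand. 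Let $\fC$ be any graded complement; its graded character is the stated one.

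Next, I would follow the strategy of \S\ref{subsec:key tools} to describe the indecomposable structure. The fusion rules locate the homogeneous weights in $\fC$: two copies of $(\tau,0)$ in degree $-1$, one copy of $(\sigma,-)$ in each of degrees $0$ and $-2$, plus the composition factor of highest-weight $(\tau,0)$ accounting for the graded term $t^{-1}\chgr\fL(\tau,0)$ in the head. I would single out one copy $\fk\subset\fC(-1)$ of $(\tau,0)$ (using an explicit element built from the bases in the appendix, analogous to $\fh$ in Proposition \ref{prop:fB}) and check by direct computation that $\yij{12}\cdot\fk=0$. Then $\fk$ is a highest-weight and $\fK:=\D\fk$ is a quotient of the Verma module $\fM(\tau,0)$; inspecting the quotients of $\fM(\tau,0)$ in \cite[Theorem 9]{PV2} together with the graded character of $\fC$ (which admits no copy of $\fL(e,\rho)$) forces $\fK\simeq\fL(\tau,0)$. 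For each weight $(\sigma,-)$ in degrees $0$ and $-2$, I would pick an explicit generator $\fu$ and verify that $\xij{12}\cdot\fu$ and $\yij{12}\cdot\fu$ both lie in $\fK$ and are non-zero; this shows that $\D\fu$ is an extension of $\fL(\sigma,-)$ by $\fK$, hence of length two by the character.

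Finally, for the second copy of $(\tau,0)$ in $\fC(-1)$, namely a vector $\fu'$ linearly independent from $\fk$ modulo $\DSn\fk$, I would compute $\oV\cdot\fu'$ and $V\cdot\fu'$ and show, as in part \ref{item: sigma - le:submod fL e rho fL e rho} of Lemma \ref{le:submod fL e rho fL e rho}, that these images generate the whole spaces of weights $(\sigma,-)$ in $\fC(0)$ and $\fC(-2)$, respectively. By the preceding step each such $(\sigma,-)$ generates a submodule containing $\fK$, so $\D\fu'=\fC$, giving both indecomposability and the socle filtration: $\soco\fC=\fK\simeq t^{-1}\fL(\tau,0)$, the two $\fL(\sigma,-)$'s sit in $\soco^2\fC/\soco\fC$ (they cannot lie in the socle since they have $\fK$ as a submodule-quotient), and the remaining composition factor $t^{-1}\fL(\tau,0)$ completes $\soco^3\fC=\fC$. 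The main obstacle is precisely the explicit book-keeping of bases: picking the correct representatives $\fk$ and $\fu'$ among the two copies of $(\tau,0)\subset\fC(-1)$, in the spirit of the distinction between $\fh$ and $\fh'$ in Proposition \ref{prop:fB}, and then verifying the non-vanishing of the various actions from the appendix.
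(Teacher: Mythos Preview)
Your overall plan mirrors the paper's treatment of $\fB$ and is in the right spirit, but there are two concrete inaccuracies that would need fixing, and the paper's own proof of this proposition is organised a bit differently.

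First, the weight bookkeeping is off. From $\chgr\fC=2t^{-1}\chgr\fL(\tau,0)+(1+t^{-2})\chgr\fL(\sigma,-)$ one finds $(\sigma,-)$ once in degree $0$, \emph{twice} in degree $-2$, and once in degree $-4$; there is no ``one copy in degree $-2$''. The paper therefore works from the two \emph{extremal} $(\sigma,-)$-weights: the highest-weight at degree $0$ (giving a quotient of $\fM(\sigma,-)$) and the lowest-weight at degree $-4$ (giving a quotient of $\fW(\sigma,-)$, handled via Lemma~\ref{le:coVerma}). It then checks explicitly, using the elements $\fo_0,\fu$ of \cite[Lemma~23]{PV2}, that both submodules contain the \emph{same} copy of $\fL(\tau,0)$; this is how the socle is identified. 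Your proposal to work instead with a $(\sigma,-)$ at degree $-2$ is harder, since that weight is neither highest nor lowest and there are two of them.

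Second, your argument that $\fK\simeq\fL(\tau,0)$ has a gap: the absence of $\fL(e,\rho)$ among the composition factors of $\fC$ only rules out $\fK\simeq\fM(\tau,0)[-1]$, not the length-two quotient $\fM(\tau,0)/\soco\fM(\tau,0)$, whose extra factor $t^{-2}\fL(\sigma,-)$ \emph{is} compatible with $\chgr\fC$. The paper sidesteps this by reading off $\fK$ from the structure of $\D\lambda$ (the submodule generated by the degree-$0$ $(\sigma,-)$) as a quotient of $\fM(\sigma,-)$, via the explicit nonvanishing $\pi(\fo_0)\neq0$. The final step, showing a second $(\tau,0)$-weight $\nu$ in degree $-1$ generates all of $\fC$, is essentially what you propose; the paper adds a short argument (using $\chgr\fW(e,\rho)$ and Lemma~\ref{le:coVerma}) to ensure the $(e,\rho)$-weight in $\D\nu$ at degree $-3$ links back up to the lowest-weight $(\sigma,-)$ at degree $-4$.
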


\begin{figure}[h]
\begin{center}
\includegraphics{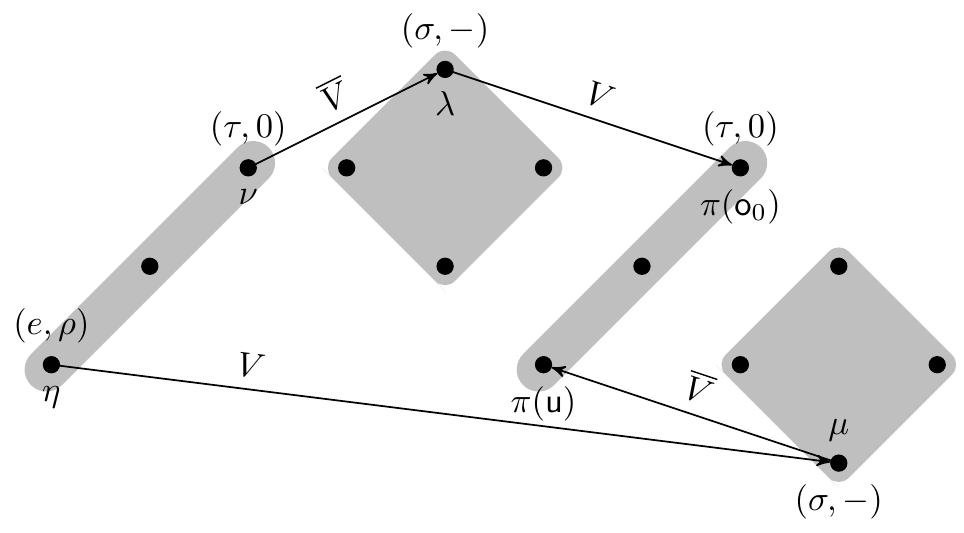}
\caption{The dots represent the weights of $\fC$. Each shaded area corresponds to a composition factor whose highest-weight is in the top. The actions of $V$ and $\oV$ are illustrated by 
the arrows.}\label{fig:fC}
\end{center}
\end{figure}

\begin{proof}
The weight $\lambda=(\sigma,-)$ of $\fC(0)$ is a highest-weight. Then we have a projection $\pi:\fM(\sigma,-)\longrightarrow\D\lambda$ and hence $\D\lambda$ is a quotient of 
$\fM(\sigma,-)$ by one of the submodules given in \cite[Theorem 7]{PV2}. By $\chgr\fC$, we see that either $\D\lambda\simeq\fL(\sigma,-)$ or $\D\lambda$ is an extension of $\fL(\sigma,-)$ 
by $\fL(\tau,0)$.

By the fusion rules \cite[\S 2.5.4]{PV2}, $\ft=c_8\ot(b_6+b_7)$ generates $\lambda$. Let $\fo_0\in\fM(\sigma,-)$ be as in \cite[Lemma 23]{PV2}. Then 
\begin{align*}
0\neq\pi(\fo_0)&=\xij{13}\cdot\ft+\xij{12}\cdot\bigl(c_9\ot(\zeta^2b_6+\zeta b_7)\bigr)+
\xij{23}\cdot\bigl(c_{10}\ot(\zeta b_6+\zeta^2b_7)\bigr)\\
&=(\zeta-\zeta^2)(c_8\ot b_3+c_9\ot b_5+c_{10}\ot b_4)+\frac{3}{1-\zeta}(\zeta c_6\ot b_7-c_4\ot b_6).
\end{align*}
Therefore $\D\lambda$ is an extension of $\fL(\sigma,-)$ by $\fL(\tau,0)$. In particular, this shows that $t^{-1}\fL(\tau,0)\subseteq\soco\fC$.

If $\fu\in\fM(\sigma,-)$ is as in \cite[Lemma 23]{PV2}, we have

\begin{align*}
\pi(\fu)&=\zeta^2\xij{12}\xij{13}\xij{12}(13)\cdot\ft-\zeta\xij{12}\xij{13}\xij{23}(23)\cdot\ft+\xij{13}\xij{12}\xij{23}\cdot\ft\\
&=3\bigl(\zeta^2c_2\ot b_3+\zeta c_3\ot b_5-c_4\ot b_2+c_1\ot b_4+c_7\ot b_1\bigr).
\end{align*}

\

On the other hand, the lowest-weight $\mu=(\sigma,-)\subset\fC(-4)$ is generated by $\ft'=c_3\ot b_1+c_2\ot b_2$, cf. \cite[\S 2.5.4]{PV2}. We have that
\begin{align*}
\yij{12}\cdot\ft'-\zeta^2\yij{23}(13)\cdot\ft'-\zeta\yij{13}(23)\cdot\ft'&=\\
=(1-\zeta)c_2\ot b_3+(\zeta^2-1)c_3\ot b_5-&\frac{3\zeta^2}{\zeta-1}c_4\ot b_2+(\zeta-\zeta^2)c_1\ot b_4+\frac{3\zeta^2}{\zeta-1}c_7\ot b_1\\
&=\frac{\zeta^2}{\zeta-1}\pi(\fu).
\end{align*}
Therefore $\D\mu$ is an extension of $\fL(\sigma,-)$ by $\fL(\tau,0)$ thanks to Lemma \ref{le:coVerma}, and $\D\mu\cap\D\lambda\simeq\fL(\tau,0)$.

\

Let $N$ denote the space of weights $(\tau,0)$ contained in $\bigl(\fL(\sigma,-)\otimes \fL(e, \rho)\bigr)(-1)$. By the fusion rules \cite[\S 2.5.4]{PV2}, $N$ is generated by
\begin{align*}
\{c_4\ot b_6,\quad c_6\ot b_7,\quad c_8\ot b_3+c_9\ot b_5+c_{10}\ot b_4\}.
\end{align*}
Hence $\oV\cdot N\subset\lambda$. In fact,
\begin{align*}
\zeta\ft=&\bigl(1-(12)\bigr)\yij{13}\cdot(c_4\ot b_6),\\
\zeta^2\ft=&\bigl(1-(12)\bigr)\yij{13}\cdot(c_6\ot b_7)\quad\mbox{and}\\
(\zeta-\zeta^2)\ft=&\bigl(1-(12)\bigr)\yij{13}\cdot(c_8\ot b_3+c_9\ot b_5+c_{10}\ot b_4).
\end{align*}
In particular, if $\nu\simeq(\tau,0)$ is a weight of $\fC(-1)$ which is different from $\DSn\cdot\pi(\fo_0)$, then $\oV\cdot\nu=\lambda$. Hence $\D\nu$ contains $\D\lambda$.

Let $\eta\simeq(e,\rho)$ be a weight of $\D\nu$ which is different from $\DSn\cdot\pi(\fu)$. We claim that $\oV\cdot\eta=\mu$. Otherwise, $\eta$ should be a lowest-weight because of 
$\chgr\fC$. Hence $\D\nu$ is a quotient of $\fW(e,\rho)$ with two composition factors isomorphic to $\chgr\fL(\tau,0)$. However, this can not happen 
by Lemma \ref{le:coVerma} and our claim follows.

Therefore $\D\nu=\fC$ because $\chgr\D\nu=\chgr\fC$.
\end{proof}

\subsection{The remainder cases}

The functor $\fL(\varepsilon)\ot-$ is the identity and $\fM\ot\fN\simeq\fN\ot\fM$ because $\D$ is quasitriangular. Thus, we finish the description of the tensor product between 
non-projective simple modules with the next proposition.

\begin{prop}\label{prop:remainder}
We have that
\begin{align*}
\fL(\tau,0)\ot\fL(\tau,0)&\simeq\fL(e,-)\oplus\fB^*,\\
\fL(\sigma,-)\ot\fL(\tau,0)&\simeq\fL(\sigma,+)\oplus\fC^*.
\end{align*}
\end{prop}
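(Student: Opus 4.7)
The plan is to obtain both isomorphisms by duality from Propositions \ref{prop:fB} and \ref{prop:fC}, exploiting the list of self-dualities recorded earlier in the paper.

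First I would recall that, for any finite-dimensional Hopf algebra, the dualization functor is a contravariant monoidal equivalence, so $(\fM\ot\fN)^*\simeq\fN^*\ot\fM^*$ for all finite-dimensional $\D$-modules $\fM,\fN$. Since $\D$ is quasitriangular, the tensor product is moreover commutative, hence $(\fM\ot\fN)^*\simeq\fM^*\ot\fN^*$. Also recall from the excerpt that the simple modules are self-dual except for $\fL(e,\rho)^*\simeq\fL(\tau,0)$ and $\fL(\tau,1)^*\simeq\fL(\tau,2)$; in particular $\fL(e,-)$, $\fL(\sigma,\pm)$ and $\fL(\sigma,-)$ are self-dual.

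Next, applying $(-)^*$ to the isomorphism $\fL(e,\rho)\ot\fL(e,\rho)\simeq\fL(e,-)\oplus\fB$ of Proposition \ref{prop:fB}, the left-hand side becomes $\fL(e,\rho)^*\ot\fL(e,\rho)^*\simeq\fL(\tau,0)\ot\fL(\tau,0)$, while the right-hand side becomes $\fL(e,-)^*\oplus\fB^*\simeq\fL(e,-)\oplus\fB^*$, yielding the first isomorphism. In the same way, from $\fL(\sigma,-)\ot\fL(e,\rho)\simeq\fL(\sigma,+)\oplus\fC$ of Proposition \ref{prop:fC} we get
\[
\fL(\sigma,-)\ot\fL(\tau,0)\simeq\bigl(\fL(\sigma,-)\ot\fL(e,\rho)\bigr)^*\simeq\fL(\sigma,+)^*\oplus\fC^*\simeq\fL(\sigma,+)\oplus\fC^*,
\]
which is the second statement.

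There is essentially no obstacle here: the only point that needs to be made carefully is the monoidality and commutativity of the duality functor on $\D$-modules, which is a standard consequence of $\D$ being a quasitriangular finite-dimensional Hopf algebra. Everything else follows from the self-duality list already recorded before the statement.
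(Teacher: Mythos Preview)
Your argument is correct and is exactly the approach taken in the paper: the paper's proof is the single sentence ``It follows from dualizing the isomorphisms of Propositions \ref{prop:fB} and \ref{prop:fC}.'' You have simply spelled out the details of that dualization.
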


\begin{proof}
It follows from dualizing the isomorphisms of Propositions \ref{prop:fB} and \ref{prop:fC}.
\end{proof}

\section{The projective modules}\label{sec:proyectivos}

We denote by $\fP(\lambda)$ the projective cover of $\fL(\lambda)$ for all $\lambda\in\Lambda$. Since $\D$ is symmetric \cite{MR1435369}, $\fP(\lambda)$ also is the injective  hull of $\fL(\lambda)$. 

Up to shifts, $\fP(\lambda)$ admits a unique $\Z$-grading \cite{MR659212}. We fix one such that $\lambda$ is a homogeneous weight of degree $0$ generating $\fP(\lambda)$. Thus, $\fP(\lambda)$ also is the projective cover and the injective hull of $\fL(\lambda)$ as a graded module, cf. \cite[Lemma 8]{vay-proj}.

\

Let $R^\bullet_{proj}$ denote the Grothendieck ring of the subcategory of projective modules. The sets $\{\chgr\fP(\lambda)\mid\lambda\in\Lambda\}$, $\{\chgr\fM(\lambda)\mid\lambda\in\Lambda\}$ and $\{\chgr\fW(\lambda)\mid\lambda\in\Lambda\}$ are $\Z[t,t^{-1}]$-bases of $R^\bullet_{proj}$ \cite[Remark 3]{vay-proj}. Then, for every graded projective module $\fP$, there are polynomials $p_{\fP,\fP(\lambda)}$, $p_{\fP,\fM(\lambda)}$ and $p_{\fP,\fW(\lambda)}$ in $\Z[t,t^{-1}]$ satisfying the following properties.
\begin{align}
\label{eq:proj D mod}
\chgr\fP&=\sum_{\lambda\in\Lambda}p_{\fP,\fP(\lambda)}\chgr\fP(\lambda)\Longleftrightarrow
\fP\simeq\oplus_{\lambda\in\Lambda}p_{\fP,\fP(\lambda)}\fP(\lambda)\quad\mbox{as graded modules.}\\
\label{eq:proj D-0 mod}
\chgr\fP=&\sum_{\lambda\in\Lambda}p_{\fP,\fM(\lambda)}\chgr\fM(\lambda)\Longleftrightarrow
\fP\simeq\oplus_{\lambda\in\Lambda}p_{\fP,\fM(\lambda)}\fM(\lambda)\quad\mbox{as graded $\D^{\leq0}$-modules.}\\
\label{eq:proj D+0 mod}
\chgr\fP=&\sum_{\lambda\in\Lambda}p_{\fP,\fW(\lambda)}\chgr\fW(\lambda)\Longleftrightarrow
\fP\simeq\oplus_{\lambda\in\Lambda}p_{\fP,\fW(\lambda)}\fW(\lambda)\quad\mbox{as graded $\D^{\geq0}$-modules.}
\end{align}

The graded BGG Reciprocity \cite[Corollary 12 and Theorem 20]{vay-proj} states that 
\begin{align}\label{eq:polinomios de verma y coverma}
p_{\fP(\mu),\fM(\lambda)}=\overline{p_{\fM(\lambda),\fL(\mu)}}=t^4\,p_{\fP(\mu),\fW(\lambda)}.
\end{align}
for all $\mu,\lambda\in\Lambda$. Therefore,
\begin{align*}
 \chgr\fP(\varepsilon)&=(1+t^{4})\chgr\fM(\varepsilon)+(t+t^3)\chgr\fM(\sigma,-),\\
 \noalign{\smallskip}
 \chgr\fP(e,\rho)&=\chgr\fM(e,\rho)+t\chgr\fL(\sigma,-)+t^{2}\chgr\fM(\tau,0),\\
 \noalign{\smallskip}
 \chgr\fP(\sigma,-)&=(1+t^2)\chgr\fL(\sigma,-)+t\chgr\fM(\varepsilon)+t\chgr\fM(e,\rho)+t\chgr\fM(\tau,0),\\
 \noalign{\smallskip}
 \chgr\fP(\tau,0)&=\chgr\fM(\tau,0)+t\chgr\fM(\sigma,-)+t^{2}\chgr\fM(e,\rho),\\
 \noalign{\smallskip}
 \chgr\fP(\lambda)&=\chgr\fM(\lambda),\quad\forall\,\lambda\in\Lsp.
\end{align*}

\

We give more information on the structure of the indecomposable projective modules using \cite[Remark 4]{vay-proj}. In the following, if $\fM(\lambda)[\ell]$ is a graded shift of a Verma 
module, we shall denote its highest-weight by $1\ot\lambda[\ell]$. We will omit $\ell$ if it is zero.

\begin{prop}\label{prop:P sigma menos}
As graded $\D^{\leq0}$-modules,
\begin{align*}
\fP(\sigma,-)=\fM(\sigma,-)[2]\oplus\fM(\varepsilon)[1]\oplus\fM(e,\rho)[1]\oplus\fM(\tau,0)[1]\oplus\fM(\sigma,-). 
\end{align*}
The action of $\oV$ satisfies:
\begin{align*}
\oV\cdot\left(1\ot(\sigma,-)[2]\right)&=0,\\
\oV\cdot\left(1\ot\varepsilon[1]\right)&=1\ot(\sigma,-)[2],\\
\oV\cdot\left(1\ot(e,\rho)[1]\right)&=1\ot(\sigma,-)[2],\\
\oV\cdot\left(1\ot(\tau,0)[1]\right)&=1\ot(\sigma,-)[2].
\end{align*}
Moreover, the projection of $\oV\cdot\left(1\ot(\sigma,-)\right)$ over $\fM(\lambda)[1]$ is equal to $(1\ot\lambda)[1]$ for all $\lambda\in\{\varepsilon,(e,\rho),(\tau,0)\}$.

Therefore
\begin{enumerate}[label=(\roman*)]
\item\label{item:submod 2 prop:P sigma menos} The submodule generated by $1\ot(\sigma,-)[2]$ is isomorphic to $\fM(\sigma,-)[2]$. 
 \smallskip
 \item\label{item:submod 1 prop:P sigma menos} The submodule generated by $1\ot\lambda[1]$ is equal to $\fM(\sigma,-)[2]\oplus\fM(\lambda)[1]$ as graded $\D^{\leq0}$-module for all $\lambda\in\{\varepsilon,(e,\rho),(\tau,0)\}$.
 \smallskip
 \item\label{item:generator prop:P sigma menos} $\fP(\sigma,-)$ is generated by the homogeneous weight $1\ot(\sigma,-)$ of degree $0$.
 \smallskip
 \item\label{item:standard filt prop:P sigma menos} The following are standard filtrations of $\fP(\sigma,-)$
 \begin{align*}
 \fM(\sigma,-)[2]\subset&\quad\D\cdot\left(1\ot\lambda_1[1]\right)\quad\subset\quad\D\cdot\left(1\ot\lambda_1[1]\right)+\D\cdot\left(1\ot\lambda_2[1]\right)\\
 &\subset\quad\D\cdot\left(1\ot\lambda_1[1]\right)+\D\cdot\left(1\ot\lambda_2[1]\right)+\D\cdot\left(1\ot\lambda_3[1]\right)
 \quad\subset\quad\fP(\sigma,-)
 \end{align*}
where $\{\lambda_1,\lambda_2,\lambda_3\}=\{\varepsilon,(e,\rho),(\tau,0)\}$. 
\end{enumerate}
\end{prop}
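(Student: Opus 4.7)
The decomposition of $\fP(\sigma,-)$ as a graded $\D^{\leq0}$-module will be extracted from \eqref{eq:proj D-0 mod} together with the graded BGG reciprocity \eqref{eq:polinomios de verma y coverma}. Reading the multiplicity of $\chgr\fL(\sigma,-)$ in each $\chgr\fM(\lambda)$ listed in the previous subsection and applying $p(t,t^{-1})\mapsto\overline{p(t,t^{-1})}$, one obtains $p_{\fP(\sigma,-),\fM(\sigma,-)}=1+t^{2}$, $p_{\fP(\sigma,-),\fM(\lambda)}=t$ for $\lambda\in\{\varepsilon,(e,\rho),(\tau,0)\}$, and zero otherwise, which is precisely the direct-sum decomposition claimed. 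In particular, $\fP(\sigma,-)$ has top graded degree $2$, and this top component equals $(1\ot(\sigma,-))[2]\simeq(\sigma,-)$ as $\DSn$-module.

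The actions of $\oV$ on the Verma highest-weights then follow from degree considerations and Remark \ref{obs:action maps}. Since the degree-$3$ part of $\fP(\sigma,-)$ is empty, $\oV\cdot(1\ot(\sigma,-)[2])=0$ is automatic. For each $\lambda\in\{\varepsilon,(e,\rho),(\tau,0)\}$, the restriction of the action to a map $\oV\ot\lambda\to\fP(\sigma,-)(2)\simeq(\sigma,-)$ is $\DSn$-equivariant, and the fusion rules \cite[\S 2.5.4]{PV2} imply that $(\sigma,-)$ appears with multiplicity one in $\oV\ot\lambda$; by Schur's lemma its image is either zero or all of $(1\ot(\sigma,-))[2]$. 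The non-vanishing case is enforced by the indecomposability of $\fP(\sigma,-)$ along the lines of \cite[Remark 4]{vay-proj}: if $\oV\cdot(1\ot\lambda_0[1])=0$ for some $\lambda_0$, then using the commutation relations encoded in \S\ref{subsec:lo basico} \ref{item:comultiplication} one shows that the $\D^{\leq0}$-summand $\fM(\lambda_0)[1]$ is also $\oV$-stable, hence a $\D$-direct summand of $\fP(\sigma,-)$, contradicting the indecomposability of this projective cover. After a suitable rescaling of $1\ot\lambda[1]$ we obtain the displayed formulas. An entirely analogous argument applied in degrees $0\to 1$, to the $\DSn$-map $\oV\ot(\sigma,-)\to\fP(\sigma,-)(1)$ followed by projection onto the top $(1\ot\lambda)[1]\simeq\lambda$ of each Verma summand, yields the projection statement for $\oV\cdot(1\ot(\sigma,-))$.

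The consequences (i)--(iv) then follow formally. Statement (i) is immediate: $\oV$ kills $1\ot(\sigma,-)[2]$, so $\D\cdot(1\ot(\sigma,-)[2])=\D^{\leq0}\cdot(1\ot(\sigma,-)[2])=\fM(\sigma,-)[2]$. For (ii), the identity $\oV\cdot(1\ot\lambda[1])=1\ot(\sigma,-)[2]$ forces $\D\cdot(1\ot\lambda[1])\supseteq\fM(\sigma,-)[2]$, while the reverse inclusion $\D\cdot(1\ot\lambda[1])\subseteq\fM(\sigma,-)[2]\oplus\fM(\lambda)[1]$ holds by degree reasons since no further $\D^{\leq0}$-generator is reached from $1\ot\lambda[1]$. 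Statement (iii) follows from the projection formula: iterated application of $\oV$ to $1\ot(\sigma,-)$ reaches each $1\ot\lambda[1]$ (modulo $\fM(\sigma,-)[2]$) and then, by a second $\oV$-step, reaches $1\ot(\sigma,-)[2]$; combined with (i)--(ii) this forces $\D\cdot(1\ot(\sigma,-))=\fP(\sigma,-)$. Finally, (iv) assembles the filtrations from (i)--(ii) in the three orders of $\{\varepsilon,(e,\rho),(\tau,0)\}$, each successive quotient being a Verma module by construction. The main obstacle is the non-vanishing claim in the second paragraph: making the indecomposability argument rigorous requires tracking how $\oV$ interacts with the $\D^{\leq0}$-action on the whole summand $\fM(\lambda_0)[1]$, not only on its generator, and this is where the explicit commutation relations of $\D$ become essential.
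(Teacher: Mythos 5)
Your derivation of the $\D^{\leq0}$-decomposition via BGG reciprocity, and the degree argument forcing $\oV\cdot(1\ot(\sigma,-)[2])=0$, match the paper. The gap is exactly where you suspect: the non-vanishing claims. You argue that if $\oV\cdot(1\ot\lambda_0[1])=0$ then $\fM(\lambda_0)[1]$ is $\oV$-stable, ``hence a $\D$-direct summand,'' contradicting indecomposability. But that inference does not hold: an $\oV$-stable $\D^{\leq0}$-direct summand need not be a $\D$-direct summand. Indeed $\fM(\sigma,-)[2]$ itself is an $\oV$-stable $\D^{\leq0}$-direct summand of $\fP(\sigma,-)$ yet is certainly not a $\D$-direct summand. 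No amount of tracking commutation relations on the generator (or on all of $\fM(\lambda_0)[1]$) produces a complement as a $\D$-module, so this route is a dead end, not merely an unfinished calculation.

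The paper's argument closes the gap at no extra cost, using the socle rather than a putative direct-sum decomposition. If $\oV\cdot(1\ot\lambda_0[1])=0$, then $\fM(\lambda_0)[1]$ is a Verma \emph{submodule} of $\fP(\sigma,-)$. Since $\fP(\sigma,-)$ is the injective hull of $\fL(\sigma,-)$, its socle is simple, isomorphic to (a shift of) $\fL(\sigma,-)$; every Verma submodule has simple socle contained in it, so $\fP(\sigma,-)$ has a unique Verma submodule, which must be $\fM(\sigma,-)[2]$ because $1\ot(\sigma,-)[2]$ is forced by the grading to be a highest weight. And $\soco\fM(\lambda_0)[1]\simeq\fL(\overline{\lambda_0})[\cdot]\not\simeq\fL(\sigma,-)[\cdot]$ for $\lambda_0\in\{\e,(e,\rho),(\tau,0)\}$, so $\fM(\lambda_0)[1]$ cannot be a Verma submodule. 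Dually, your claim that ``an entirely analogous argument'' gives the projection statement for $\oV\cdot(1\ot(\sigma,-))$ is not substantiated: there is no candidate $\oV$-stable summand to contradict indecomposability. The paper instead uses that $\fP(\sigma,-)$ has simple \emph{head}: if the projection of $\oV\cdot(1\ot(\sigma,-))$ onto $1\ot\lambda[1]$ vanished, then by part (ii) the submodule $\fN=\D\cdot(1\ot(\sigma,-))$ would have $\fP(\sigma,-)/\fN\simeq\fM(\lambda)[1]$, producing a simple quotient $\fL(\lambda)[1]\not\simeq\fL(\sigma,-)$, which is impossible. Once both non-vanishing statements are in place, your parts (i)--(iv) do follow as you describe.
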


\begin{proof}
The structure of $\D^{\leq0}$-module of $\fP(\sigma,-)$ follows by \eqref{eq:proj D-0 mod}. A direct consequence of this isomorphism is that $\fM(\lambda)[\ell_\lambda]$ is a graded submodule of $\fP(\sigma,-)$ if $1\ot\lambda[\ell_\lambda]$ is a highest-weight. But $\fP(\sigma,-)$ has only one Verma submodule because its socle is simple. Then we see that such a Verma module is $\fM(\sigma,-)[2]$.

To calculate the $\oV$-actions, we shall use the grading on $\fP(\sigma,-)$ which ensures that $\oV\cdot(1\ot\lambda[\ell_\lambda])\subseteq\fP(\sigma,-)(\ell_\lambda+1)$. Then the action of $\oV$ on $\left(1\ot(\sigma,-)\right)[2]$ is zero because $\fP(\sigma,-)(3)=0$. This shows \ref{item:submod 2 prop:P sigma menos}.

\smallskip

By the graded character, $\fP(\sigma,-)(2)=1\ot(\sigma,-)[2]$ and then $0\neq\oV\cdot\left(1\ot\lambda[1]\right)\subseteq1\ot(\sigma,-)[2]$. Hence the equality holds, because $1\ot(\sigma,-)$ is a weight, and \ref{item:submod 1 prop:P sigma menos} follows.

\smallskip

We now analyze the action on $1\ot(\sigma,-)$. We have that
$$
\oV\cdot\left(1\ot(\sigma,-)\right)\subset\left(1\ot\varepsilon\right)[1]\oplus\left(1\ot(e,\rho)\right)[1]\oplus\left(1\ot(\tau,0)\right)[1]\oplus\fM(\sigma,-)[2](1).
$$
If the projection of $\oV\cdot\left(1\ot(\sigma,-)\right)$ over $1\ot\lambda[1]$ is zero for some $\lambda\in\{\varepsilon,(e,\rho),(\tau,0)\}$, then the submodule $\fN$ generated by $1\ot(\sigma,-)$ satisfies $\fP(\sigma,-)/\fN\simeq\fM(\lambda)[1]$ by \ref{item:submod 1 prop:P sigma menos}. But this is not possible since $\fP(\sigma,-)$ has simple head. Hence the projection is equal to $1\ot\lambda[1]$ because it is a weight. In particular, we see that \ref{item:generator prop:P sigma menos} holds.

\smallskip

The filtrations in \ref{item:standard filt prop:P sigma menos} are standard by \ref{item:submod 1 prop:P sigma menos} and \ref{item:generator prop:P sigma menos}.
\end{proof}

The demonstrations of the next results are analogous to Proposition \ref{prop:P sigma menos}. 

\begin{prop}\label{prop:P e mas}
As graded $\D^{\leq0}$-modules,
\begin{align*}
\fP(\varepsilon)=\fM(\varepsilon)[4]\oplus\fM(\sigma,-)[3]\oplus\fM(\sigma,-)[1]\oplus\fM(\varepsilon). 
\end{align*}
The action of $\oV$ satisfies:
\begin{align*}
\oV\cdot\left(1\ot\varepsilon[4]\right)&=0,\\
\oV\cdot\left(1\ot(\sigma,-)[3]\right)&=1\ot\varepsilon[4].
\end{align*}
Moreover, the projection of $\oV\cdot\left(1\ot\varepsilon\right)$ over $\fM(\sigma,-)[1]$ is equal to $1\ot(\sigma,-)[1]$.

Therefore
\begin{enumerate}[label=(\roman*)]
\item\label{item:submod 4 prop:P e mas} $\D\cdot(1\ot\varepsilon[4])\simeq\fM(\varepsilon)[4]$. 
\smallskip
\item\label{item:submod 1 prop:P e mas} $\D\cdot\left(1\ot(\sigma,-)[3]\right)=\fM(\varepsilon)[4]\oplus\fM(\sigma,-)[3]$ as graded $\D^{\leq0}$-modules.
 \smallskip
 \item\label{item:submod 3 prop:P e mas} $\D\cdot\left(1\ot(\sigma,-)[1]\right)+\D\cdot\left(1\ot(\sigma,-)[3]\right)=\fM(\varepsilon)[4]\oplus\fM(\sigma,-)[3]\oplus\fM(\sigma,-)[1]$ as graded $\D^{\leq0}$-modules.
 \smallskip
 \item\label{item:generator prop:P e mas} $\fP(\varepsilon)=\D\cdot(1\ot\varepsilon)$.
 \smallskip
 \item\label{item:standard filt prop:P e mas} The following is a standard filtration of $\fP(\varepsilon)$
 \begin{align*}
 \fM(\varepsilon)[4]\,\subset\,\D\cdot\left(1\ot(\sigma,-)[3]\right)
 \,\subset\,\D\cdot\left(1\ot(\sigma,-)[1]\right)+\D\cdot\left(1\ot(\sigma,-)[3]\right)\,\subset\,\fP(\varepsilon).
 \end{align*}
 \end{enumerate}
\end{prop}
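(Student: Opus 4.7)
My plan is to follow, step by step, the strategy of the proof of Proposition \ref{prop:P sigma menos}. First, applying \eqref{eq:proj D-0 mod} to $\chgr\fP(\varepsilon)=(1+t^4)\chgr\fM(\varepsilon)+(t+t^3)\chgr\fM(\sigma,-)$ yields the stated $\D^{\leq 0}$-decomposition. Because $4$ is the top degree of $\fP(\varepsilon)$ and the corresponding component is one-dimensional, $1\ot\varepsilon[4]$ is automatically a highest-weight, so $\D\cdot(1\ot\varepsilon[4])=\fM(\varepsilon)[4]$, proving \ref{item:submod 4 prop:P e mas}. Since $\fP(\varepsilon)$ is indecomposable injective with simple socle $\fL(\varepsilon)$, it admits at most one Verma $\D$-submodule; hence $1\ot(\sigma,-)[3]$ cannot be a highest-weight and $\oV\cdot(1\ot(\sigma,-)[3])\neq 0$.

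The two $\oV$-actions stated in the proposition are then pinned down by degree and $\DSn$-equivariance (Remark \ref{obs:action maps}). The first is zero because $\fP(\varepsilon)(5)=0$. The second lies in $\fP(\varepsilon)(4)=\ku\cdot(1\ot\varepsilon[4])$ and is a $\DSn$-morphism from $\oV\ot(\sigma,-)$ whose $\varepsilon$-isotypic is one-dimensional; having already shown it is nonzero, after rescaling the generator it equals $1\ot\varepsilon[4]$. Item \ref{item:submod 1 prop:P e mas} follows: $\D\cdot(1\ot(\sigma,-)[3])$ contains $\fM(\sigma,-)[3]=\D^{\leq 0}\cdot(1\ot(\sigma,-)[3])$ and $\fM(\varepsilon)[4]=\D\cdot(1\ot\varepsilon[4])$, while the reverse inclusion holds because $\fM(\varepsilon)[4]\oplus\fM(\sigma,-)[3]$ is already $\D$-stable---indeed, in strictly positive degrees $\fP(\varepsilon)$ is concentrated in this sum.

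For \ref{item:generator prop:P e mas}, the simple head $\fL(\varepsilon)$ together with the fact that $1\ot\varepsilon$ projects to a generator of the head forces $\fP(\varepsilon)=\D\cdot(1\ot\varepsilon)$. The more delicate claim is that the projection of $\oV\cdot(1\ot\varepsilon)$ onto $\fM(\sigma,-)[1]$ equals $1\ot(\sigma,-)[1]$. Mimicking Proposition \ref{prop:P sigma menos}: if that projection were zero, then, using \ref{item:submod 3 prop:P e mas} (proved en route exactly as \ref{item:submod 1 prop:P e mas} above), the submodule $\fN=\D\cdot(1\ot\varepsilon)$ would satisfy $\fP(\varepsilon)/\fN\simeq\fM(\sigma,-)[1]$, whose head is $\fL(\sigma,-)$, contradicting the simple head $\fL(\varepsilon)$. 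Nonvanishing then promotes the projection to an isomorphism of $\DSn$-modules from $\oV\ot\varepsilon\simeq(\sigma,-)$ onto $1\ot(\sigma,-)[1]\simeq(\sigma,-)$, and after a generator rescale we obtain the stated equality.

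The main technical point underlying the whole scheme is the $\D$-stability bookkeeping behind \ref{item:submod 1 prop:P e mas} and \ref{item:submod 3 prop:P e mas}: one must check that $\oV$ acting on $\fM(\sigma,-)[1]\oplus\fM(\sigma,-)[3]\oplus\fM(\varepsilon)[4]$ lands back inside this sum. This is automatic since $\fM(\varepsilon)$ and $\fM(\sigma,-)[1]$ vanish in degrees $\geq 2$, so the entire positive-degree part of $\fP(\varepsilon)$ already lies in $\fM(\varepsilon)[4]\oplus\fM(\sigma,-)[3]$. Once these submodule identifications are in hand, the standard filtration \ref{item:standard filt prop:P e mas} reads off directly from \ref{item:submod 4 prop:P e mas}--\ref{item:generator prop:P e mas}.
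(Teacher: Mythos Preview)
Your approach is exactly the paper's---it explicitly says ``arguing as in the above proposition''---and the overall architecture is right. But two of your justifications contain slips you should fix.

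First, the sentence ``in strictly positive degrees $\fP(\varepsilon)$ is concentrated in $\fM(\varepsilon)[4]\oplus\fM(\sigma,-)[3]$'' is false: in degree $1$ the summand $\fM(\sigma,-)[1]$ contributes its highest weight $1\ot(\sigma,-)[1]$. The correct (and sufficient) statement is for degrees~$\geq 2$, which is precisely what the paper records for \ref{item:submod 3 prop:P e mas}. More importantly, this degree argument does \emph{not} prove that $\fM(\varepsilon)[4]\oplus\fM(\sigma,-)[3]$ is $\D$-stable, since this sum lives in degrees $-1$ through $4$ and $\oV$ acting on its low-degree part could a priori leak into $\fM(\sigma,-)[1]$ or $\fM(\varepsilon)$. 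The clean way to get the reverse inclusion in \ref{item:submod 1 prop:P e mas} is to use the triangular decomposition $\D=\D^{\leq0}\,\BV(\oV)$: since $1\ot(\sigma,-)[3]$ sits in degree $3$ and the top degree is $4$, one has $\BV(\oV)\cdot(1\ot(\sigma,-)[3])=(1\ot(\sigma,-)[3])+(1\ot\varepsilon[4])$, and applying $\D^{\leq0}$ gives exactly $\fM(\sigma,-)[3]\oplus\fM(\varepsilon)[4]$. The same device (with the degree-$\geq 2$ observation) handles \ref{item:submod 3 prop:P e mas}.

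Second, in the projection argument you assert $\fP(\varepsilon)/\fN\simeq\fM(\sigma,-)[1]$ with $\fN=\D\cdot(1\ot\varepsilon)$. Under the assumption that the projection vanishes you only get $\fN\subset\fM(\varepsilon)\oplus\fM(\varepsilon)[4]\oplus\fM(\sigma,-)[3]$; you have not shown $\fN$ already contains $\fM(\varepsilon)[4]\oplus\fM(\sigma,-)[3]$. Replace $\fN$ by $\fN+\D\cdot(1\ot(\sigma,-)[3])$: by \ref{item:submod 1 prop:P e mas} this equals $\fM(\varepsilon)\oplus\fM(\varepsilon)[4]\oplus\fM(\sigma,-)[3]$, and then the quotient really is $\fM(\sigma,-)[1]$, whose head $\fL(\sigma,-)$ contradicts the simple head $\fL(\varepsilon)$ of $\fP(\varepsilon)$. (This is exactly how Proposition~\ref{prop:P sigma menos} uses its item (ii), not its item (iii).)
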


\begin{proof}
The equality for the action of $\oV$ over $1\ot\varepsilon[4]$ and $1\ot(\sigma,-)[3]$ is a direct consequence of the grading. Hence, we can deduce that 
the projection of $\oV\cdot\left(1\ot\varepsilon\right)$ over $\fM(\sigma,-)[1]$ is equal to $1\ot(\sigma,-)[1]$ arguing as in the above proposition. For \ref{item:submod 3 prop:P e mas} note that $\oV\cdot\left(1\ot(\sigma,-)[1]\right)\subset\fP(\varepsilon)(2)\subset\fM(\varepsilon)[4]\oplus\fM(\sigma,-)[3]$.
\end{proof}

\begin{prop}\label{prop:P e rho}
As graded $\D^{\leq0}$-modules,
\begin{align*}
\fP(e,\rho)=\fM(\tau,0)[2]\oplus\fM(\sigma,-)[1]\oplus\fM(e,\rho). 
\end{align*}
The action of $\oV$ satisfies:
\begin{align*}
\oV\cdot\left(1\ot(\tau,0)[2]\right)&=0,\\
\oV\cdot\left(1\ot(\sigma,-)[1]\right)&=1\ot(\tau,0)[2]. 
\end{align*}
Moreover, the projection of $\oV\cdot\left(1\ot(e,\rho)\right)$ over $\fM(\sigma,-)[1]$ is equal to $1\ot(\sigma,-)[1]$.

Therefore
\begin{enumerate}[label=(\roman*)]
\item $\D\cdot\left(1\ot(\tau,0)[2]\right)\simeq\fM(\tau,0)[2]$.
\smallskip
 \item $\D\cdot\left(1\ot(\sigma,-)[1]\right)=\fM(\tau,0)[2]\oplus\fM(\sigma,-)[1]$ as graded $\D^{\leq0}$-modules.
 \smallskip
\item\label{item:generator prop:P e rho} $\fP(e,\rho)=\D\cdot1\ot(e,\rho)$.
 \smallskip
 \item\label{item:standard filt prop:P e rho} The following is a standard filtration of $\fP(e,\rho)$
 \begin{align*}
 \fM(\tau,0)[2]\,\subset\,\D\cdot\left(1\ot(\sigma,-)[1]\right)\,\subset\,\fP(e,\rho).
 \end{align*}
 \qed
 \end{enumerate}
\end{prop}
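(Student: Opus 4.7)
The plan is to mimic the argument of Proposition \ref{prop:P sigma menos} almost verbatim, working degree by degree. First I would compute the decomposition as $\D^{\leq 0}$-module using \eqref{eq:proj D-0 mod} together with the graded BGG reciprocity \eqref{eq:polinomios de verma y coverma}: the polynomials $p_{\fP(e,\rho),\fM(\lambda)}=\overline{p_{\fM(\lambda),\fL(e,\rho)}}$ are read off from the graded characters of the Verma modules listed in \S\ref{sec:Preliminaries}, and only $\lambda=(e,\rho),\,(\sigma,-),\,(\tau,0)$ contribute, with shifts $0$, $1$, $2$ respectively. This immediately gives the desired decomposition and pins down the degrees in which the homogeneous generators $1\ot(e,\rho)$, $1\ot(\sigma,-)[1]$, $1\ot(\tau,0)[2]$ live.

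Next I would determine the action of $\oV$ using the $\Z$-grading, which forces $\oV\cdot\fP(e,\rho)(i)\subseteq\fP(e,\rho)(i+1)$. From the character decomposition, the top graded piece $\fP(e,\rho)(2)$ equals $1\ot(\tau,0)[2]$ and $\fP(e,\rho)(3)=0$, so $\oV\cdot(1\ot(\tau,0)[2])=0$, showing that $\D\cdot(1\ot(\tau,0)[2])\simeq\fM(\tau,0)[2]$. This gives the unique Verma submodule, which must coincide with the socle series data since $\fP(e,\rho)$ has simple socle isomorphic to $\fL(e,\rho)$; here one uses that the socle of $\fM(\tau,0)$ is $\fL(e,\rho)$ by \cite[Theorem 3]{PV2}. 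For $\oV\cdot(1\ot(\sigma,-)[1])$, this lies in $\fP(e,\rho)(2)=\ku\cdot(1\ot(\tau,0)[2])$; it cannot be zero, for otherwise $\D\cdot(1\ot(\sigma,-)[1])$ would be a proper submodule whose complementary quotient contains $\fM(\tau,0)[2]$ in the standard filtration, contradicting the simplicity of the head. Since the left-hand side is a $\DSn$-weight of type $(\tau,0)$ landing in a one-dimensional space of such weights, it equals $1\ot(\tau,0)[2]$.

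Finally, for the projection of $\oV\cdot(1\ot(e,\rho))$ onto $\fM(\sigma,-)[1]$ I would argue exactly as in Proposition \ref{prop:P sigma menos}\ref{item:generator prop:P sigma menos}: the image lies in $\fP(e,\rho)(1)$, which as a $\DSn$-module decomposes as $1\ot(\sigma,-)[1]\oplus\fM(\tau,0)[2](1)$. If the projection onto the first summand were zero, then the submodule $\fN$ generated by $1\ot(e,\rho)$ would satisfy $\fP(e,\rho)/\fN\simeq\fM(\sigma,-)[1]$ (after dividing by what already sits in $\fM(\tau,0)[2]$), again contradicting that $\fP(e,\rho)$ has simple head $\fL(e,\rho)$. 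Being a nonzero $\DSn$-morphic image of the weight $(\sigma,-)$, the projection must be the full weight $1\ot(\sigma,-)[1]$. Consequences (i)--(iv) are then immediate: (i) and (ii) from the vanishing/non-vanishing of $\oV$ together with the $\D^{\leq 0}$-decomposition, (iii) from the fact that the successive $\oV$-actions reach generators of all three Verma summands, and (iv) from (i)--(iii).

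The main obstacle is almost purely bookkeeping; the only conceptual point is the simple-head argument used twice to force $\oV$-actions to be nonzero with the correct projection, and this is identical in spirit to the analogous step in the proof of Proposition \ref{prop:P sigma menos}, so no new difficulty arises.
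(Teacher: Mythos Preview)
Your outline is the right one and matches the paper, which simply declares the proof analogous to Proposition~\ref{prop:P sigma menos}. There is, however, one slip worth flagging. To show $\oV\cdot\bigl(1\ot(\sigma,-)[1]\bigr)\neq0$ you invoke the simplicity of the head, but that does not produce a contradiction at this step: even if $\fM(\sigma,-)[1]$ were a $\D$-submodule, the quotient $\fP(e,\rho)/\fM(\sigma,-)[1]$ could still have simple head $\fL(e,\rho)$, since nothing yet prevents $\oV\cdot\bigl(1\ot(e,\rho)\bigr)$ from reaching $1\ot(\tau,0)[2]$ and thus keeping the quotient indecomposable. The argument you want here is the one actually used in the proof of Proposition~\ref{prop:P sigma menos}: $\fP(e,\rho)$ has simple \emph{socle} $\fL(e,\rho)$, so it admits at most one Verma submodule; if $\oV\cdot\bigl(1\ot(\sigma,-)[1]\bigr)=0$ then $\fM(\sigma,-)[1]$ would be a second Verma submodule, whose socle $\fL(\sigma,-)$ is not isomorphic to $\fL(e,\rho)$, a contradiction. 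Your simple-head argument is the correct one for the \emph{final} step (the projection of $\oV\cdot\bigl(1\ot(e,\rho)\bigr)$ onto $1\ot(\sigma,-)[1]$), exactly as in the paper's treatment of the bottom generator in Proposition~\ref{prop:P sigma menos}. With that correction your sketch goes through.
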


\begin{prop}\label{prop:P tau zero}
As graded $\D^{\leq0}$-modules,
\begin{align*}
\fP(\tau,0)=\fM(e,\rho)[2]\oplus\fM(\sigma,-)[1]\oplus\fM(\tau,0). 
\end{align*}
The action of $\oV$ satisfies:
\begin{align*}
\oV\cdot\left(1\ot(e,\rho)[2]\right)&=0,\\
\oV\cdot\left(1\ot(\sigma,-)[1]\right)&=\left(1\ot(e,\rho)[2]\right). 
\end{align*}
Moreover, the projection of $\oV\cdot\left(1\ot(\tau,0)\right)$ over $\fM(\sigma,-)[1]$ is equal to $1\ot(\sigma,-)[1]$.

Therefore
\begin{enumerate}[label=(\roman*)]
\item $\D\cdot\left(1\ot(e,\rho)[2]\right)\simeq\fM(e,\rho)[2]$.
\smallskip
\item $\D\cdot\left(1\ot(\sigma,-)[1]\right)=\fM(e,\rho)[2]\oplus\fM(\sigma,-)[1]$ as graded $\D^{\leq0}$-modules.
 \smallskip
 \item\label{item:generator prop:P tau zero} $\fP(\tau,0)=\D\cdot1\ot(\tau,0)$.
\smallskip
 \item\label{item:standard filt prop:P tau zero} The following is a standard filtration of $\fP(\tau,0)$
 \begin{align*}
 \fM(e,\rho)[2]\,\subset\,\D\cdot\left(1\ot(\sigma,-)[1]\right)\,\subset\,\fP(\tau,0).
 \end{align*}
 \qed
 \end{enumerate}
\end{prop}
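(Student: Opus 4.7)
The plan is to follow exactly the template of the proof of Proposition \ref{prop:P sigma menos}, replacing the relevant weights in the $\fP(\sigma,-)$ case by those appropriate for $\fP(\tau,0)$. First, the $\D^{\leq0}$-module decomposition follows immediately from \eqref{eq:proj D-0 mod} applied to the graded character
\[
\chgr\fP(\tau,0)=\chgr\fM(\tau,0)+t\,\chgr\fM(\sigma,-)+t^{2}\,\chgr\fM(e,\rho)
\]
computed at the start of Section \ref{sec:proyectivos}. This identifies $\fP(\tau,0)$ with $\fM(e,\rho)[2]\oplus\fM(\sigma,-)[1]\oplus\fM(\tau,0)$, with $\fM(\lambda)[\ell_\lambda]$ a graded $\D^{\leq0}$-submodule whose highest-weight is $1\ot\lambda[\ell_\lambda]$.

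Next I would pin down the action of $\oV$ on the three distinguished weights using the grading $\oV\cdot\fP(\tau,0)(\ell)\subseteq\fP(\tau,0)(\ell+1)$ together with $\chgr\fP(\tau,0)$. Since $\fP(\tau,0)(\ell)=0$ for $\ell\geq 3$, we get $\oV\cdot(1\ot(e,\rho)[2])=0$ for free; in particular the submodule generated by $1\ot(e,\rho)[2]$ is isomorphic to $\fM(e,\rho)[2]$, giving (i). For $1\ot(\sigma,-)[1]$, the only homogeneous weight of type $(e,\rho)$ in degree $2$ is $1\ot(e,\rho)[2]$, so $\oV\cdot(1\ot(\sigma,-)[1])$ is contained in this space; the key observation is that if it were zero then $1\ot(\sigma,-)[1]$ would be a highest-weight and $\fP(\tau,0)$ would contain two Verma submodules, contradicting the simplicity of its socle (as $\fP(\tau,0)$ is indecomposable injective). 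Hence the action equals $1\ot(e,\rho)[2]$ as weights, and (ii) follows.

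For the remaining assertion about $\oV\cdot(1\ot(\tau,0))$, I would argue as in the proof of Proposition \ref{prop:P sigma menos}. The image lies in $\fP(\tau,0)(1)$, which by the first paragraph equals $(1\ot(\sigma,-))[1]\oplus\fM(e,\rho)[2](1)$. If the projection onto $\fM(\sigma,-)[1]$ were zero, then the submodule $\fN$ generated by $1\ot(\tau,0)$ would miss $1\ot(\sigma,-)[1]$, and using (ii) the quotient $\fP(\tau,0)/\fN$ would have a simple quotient isomorphic to $\fL(\sigma,-)[1]$, contradicting that $\fP(\tau,0)$ has simple head $\fL(\tau,0)$. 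Hence the projection is non-zero, and being a morphism of $\DSn$-modules into the irreducible $1\ot(\sigma,-)[1]$ it must be surjective, i.e.\ equal to $1\ot(\sigma,-)[1]$.

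Finally, (iii) and (iv) are formal consequences: (iii) follows because the previous paragraph shows $1\ot(\sigma,-)[1]\in\D\cdot(1\ot(\tau,0))$, and then (ii) gives $1\ot(e,\rho)[2]\in\D\cdot(1\ot(\tau,0))$; (iv) is then the chain of inclusions dictated by the decomposition in the first paragraph combined with (i) and (ii), which is standard by construction since each successive quotient is a (shifted) Verma module. The only step requiring a little care is the non-vanishing of the two $\oV$-projections, which is the analogue of the corresponding step in Proposition \ref{prop:P sigma menos} and is pinned down by the simplicity of the socle and head of $\fP(\tau,0)$ rather than by any explicit calculation.
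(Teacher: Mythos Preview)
Your proposal is correct and follows essentially the same approach as the paper, which simply declares the proof analogous to that of Proposition \ref{prop:P sigma menos} and marks it with \qed. Your write-up in fact supplies the details the paper omits: the vanishing of $\oV\cdot(1\ot(e,\rho)[2])$ by degree reasons, the non-vanishing of the two $\oV$-projections forced respectively by the simplicity of the socle and of the head of $\fP(\tau,0)$, and the resulting standard filtration, all exactly parallel to the template argument.
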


\subsection{The induced modules}
Given $\lambda\in\Lambda$, we set
\begin{align}\label{eq:fInd}
\fInd(\lambda)=\D\ot_{\D(G)}\lambda\simeq\BV(V)\ot\BV(\oV)\ot\lambda\simeq\fM\bigl(\chgr\BV(\oV)\cdot\lambda\bigr),
\end{align}
where the isomorphisms are of $\Z$-graded $\D^{\leq0}$-modules \cite[Definition 2]{vay-proj}. Thanks to \cite[Theorem 21]{vay-proj} the induced modules help to describe the product in $R^\bullet_{proj}$.

By \cite[(33)]{vay-proj}, $\fInd(\mu)\simeq\oplus_{\lambda\in\Lambda}\,\overline{p_{\fL(\lambda),\mu}}\cdot\fP(\lambda)$. Therefore
\begin{align*}
\fInd(\varepsilon)&\simeq\fP(\varepsilon)\oplus\fP(\tau,1)[2]\oplus\fP(\tau,2)[2],\\
\fInd(e,-)&\simeq(1+t^{4})\cdot\fP(e,-)\oplus(t+t^{3})\cdot\fP(\sigma,+)\oplus\fP(\tau,1)[2]\oplus\fP(\tau,2)[2],\\
\fInd(e,\rho)&\simeq\fP(e,\rho)\oplus(t+t^{3})\cdot\fP(\sigma,+)\oplus\fP(\tau,0)[2]\oplus\fP(\tau,1)[2]\oplus\fP(\tau,2)[2],\\
\fInd(\sigma,-)&\simeq(1+t^{2})\cdot\fP(\sigma,-)\,\oplus2\fP(\sigma,+)[2]\oplus(t+t^{3})\cdot\fP(\tau,1)\oplus(t+t^{3})\cdot\fP(\tau,2),\\
\fInd(\sigma,+)&\simeq(t+t^{3})\cdot\fP(e,-)\,\oplus\fP(e,\rho)[1]\oplus(1+2t^{2}+t^{4})\cdot\fP(\sigma,+)\,\oplus\\
&\qquad\qquad\qquad\qquad\oplus\fP(\tau,0)[1]\oplus(t+t^{3})\cdot\fP(\tau,1)\oplus(t+t^{3})\cdot\fP(\tau,2),\\
\fInd(\tau,0)&\simeq\fP(e,\rho)[2]\oplus(t+t^{3})\cdot\fP(\sigma,+)\oplus\fP(\tau,0)\oplus\fP(\tau,1)[2]\oplus\fP(\tau,2)[2],\\
\fInd(\tau,i)&\simeq\fP(e,-)[2]\oplus\fP(\sigma,-)[1]\oplus(t+t^{3})\cdot\fP(\sigma,+)\oplus\fP(\tau,j)[2]
\end{align*}
for $\{i,j\}=\{1,2\}$.

\subsection{The tensor products of projective modules}

For $\lambda_1,\lambda_2\in\Lambda$, it holds that
\begin{align*}
\fP(\lambda_1)\ot\fP(\lambda_2)\simeq\oplus_{\lambda,\mu\in\Lambda}\quad p_{\fP(\lambda_1),\fW(\lambda)}\, p_{\fP(\lambda_1),\fM(\mu)}\quad\fInd(\lambda\cdot\mu),
\end{align*}
by \cite[Theorem 21]{vay-proj}. The polynomials $p_{\fP(\lambda_1),\fM(\mu)}$ were given at the begining of this section and 
$p_{\fP(\lambda_1),\fW(\lambda)}=t^{-4}p_{\fP(\lambda_1),\fM(\lambda)}$, recall 
\eqref{eq:polinomios de verma y coverma}. The products of weights are in \cite[\S 2.5.4]{PV2}. Thus, the tensor products of the projective modules follow by long and 
tedious computations. For instance,
\begin{align*}
\fP(\varepsilon)&\ot\fP(\varepsilon)\simeq\\
\simeq&t^{-4}(t^8+t^6+4t^4+t^2+1)\fP(\varepsilon)\,\oplus\,2t^{-1}(1+t^2)^2\,\fP(e,-)\,\oplus\,t^{-2}(1+t^2)^3\,\fP(e,\rho)\\
&\,\oplus\,2t^{-3}(1+t^2+t^4)(1+t^2)^2\,\fP(\sigma,-)\,\oplus\,8t^{-1}(1+t^2+t^4)(1+t^2)\,\fP(\sigma,+)\\
&\,\oplus\,t^{-2}(1+t^2)^3\,\fP(\tau,0)\,\oplus\,t^{-2}\left((1+t^4)^2+2(1+t^2)^4\right)\left(\fP(\tau,1)\,\oplus\,\fP(\tau,2)\right).
\end{align*}

\

In the case of the simple projective modules, their fusion rules follow directly since $\fL(\lambda)\simeq\fP(\lambda)\simeq\fM(\lambda)\simeq\fW(\lambda)$.

\begin{prop}\label{prop:simple projective}
Let $\lambda,\mu\in\Lsp$. Hence $\fL(\lambda)\ot\fL(\mu)\simeq\fInd(\lambda\cdot\mu)$.\qed
\end{prop}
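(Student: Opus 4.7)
The plan is to reduce the proposition to a direct specialization of the general tensor product formula for graded projective modules recalled just above the statement, namely
$$\fP(\lambda_1)\ot\fP(\lambda_2)\simeq\bigoplus_{\nu,\rho\in\Lambda} p_{\fP(\lambda_1),\fW(\nu)}\,p_{\fP(\lambda_2),\fM(\rho)}\,\fInd(\nu\cdot\rho),$$
which comes from \cite[Theorem 21]{vay-proj}. The observation that makes the collapse work is that for $\lambda\in\Lsp$ the four modules $\fL(\lambda)$, $\fM(\lambda)$, $\fW(\lambda)$ and $\fP(\lambda)$ all coincide: this is Lemma \ref{le:coVerma}(vi) together with the fact that $\fL(\lambda)$ is simple and projective by \cite[Theorem 6]{PV2} and \cite[Corollary 17]{vay-proj}, so its projective cover is itself and its Verma/co-Verma modules are already simple.

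From these identifications I would read off the polynomials $p_{\fP(\lambda),\fW(\nu)}$ and $p_{\fP(\mu),\fM(\rho)}$ appearing in the formula. Since $\chgr\fP(\lambda)=\chgr\fW(\lambda)=\chgr\fM(\lambda)$ and both $\{\chgr\fW(\nu)\}_{\nu\in\Lambda}$ and $\{\chgr\fM(\rho)\}_{\rho\in\Lambda}$ are $\Z[t,t^{-1}]$-bases of $R^\bullet_{proj}$ by \cite[Remark 3]{vay-proj}, uniqueness of expansion forces $p_{\fP(\lambda),\fW(\nu)}=\delta_{\nu,\lambda}$ and $p_{\fP(\mu),\fM(\rho)}=\delta_{\rho,\mu}$. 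Plugging these Kronecker deltas into the product formula collapses the double sum to the single term $\fInd(\lambda\cdot\mu)$, and since $\fL(\lambda)=\fP(\lambda)$ and $\fL(\mu)=\fP(\mu)$ the left-hand side is $\fL(\lambda)\otimes\fL(\mu)$, giving the desired identity.

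There is no serious obstacle: this case is tailor-made as the cleanest corollary of the general machinery developed for projectives. The only point worth emphasizing is that $\lambda\cdot\mu$ on the right-hand side is to be interpreted in the Grothendieck ring $K$ of $\DSn$-modules via the fusion rules of \cite[\S 5.2.4]{PV2}, with $\fInd$ extended $\Z$-linearly to combinations of weights; this is already built into the definition \eqref{eq:fInd} and the additivity of induction. No computation beyond the application of the specialization is needed.
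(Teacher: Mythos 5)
Your proposal is correct and takes essentially the same approach as the paper: the paper's own justification is the one-sentence remark preceding the proposition, that for $\lambda\in\Lsp$ one has $\fL(\lambda)\simeq\fP(\lambda)\simeq\fM(\lambda)\simeq\fW(\lambda)$, combined with the product formula for projectives from \cite[Theorem 21]{vay-proj}. You have simply unpacked the Kronecker-delta collapse explicitly (and correctly read the second subscript as $\fP(\lambda_2)$, fixing a small typo in the displayed formula).
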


%

\subsection{Simple tensoring by projective modules}

To conclude our work, we need to analyze the products $\fL(\lambda)\ot\fL(\mu)$ with $\lambda\in\Lsp$ and $\mu\notin\Lsp$. In this case $\fL(\lambda)$ is projective and hence so are
these tensor products. Thus, we can use the graded character to obtain the following isomorphisms thanks to \eqref{eq:proj D mod}.

\begin{prop}\label{prop:simple by projective}
Let $\{i,j\}=\{1,2\}$. The next isomorphisms hold in the category of graded modules.
\begin{align*}
\fL(e,-)\ot\fL(e,\rho)&\simeq t^{-2}\fP(\tau,0)\\
\fL(e,-)\ot\fL(\tau,0)&\simeq t^{-2}\fP(e,\rho)\\
\fL(e,-)\ot\fL(\sigma,-)&\simeq t^{-1}\bigl(\fL(\tau,1)\oplus\fL(\tau,2)\bigr)\oplus(1+t^{-2})\fL(\sigma,+)
\end{align*}
\begin{align*}
\fL(\tau,i)\ot\fL(e,\rho)&\simeq(1+t^{-2})\fL(\tau,j)\oplus t^{-1}\fL(\sigma,+)\oplus t^{-2}\fP(e,\rho)\\
\fL(\tau,i)\ot\fL(\tau,0)&\simeq(1+t^{-2})\fL(\tau,j)\oplus t^{-1}\fL(\sigma,+)\oplus t^{-2}\fP(\tau,0)\\
\fL(\tau,i)\ot\fL(\sigma,-)&\simeq t^{-1}\bigl(\fL(e,-)\oplus\fL(\tau,j)\bigr)\oplus(1+t^{-2})\fL(\sigma,+)\oplus t^{-2}\fL(\sigma,-)
\end{align*}
\begin{align*}
\fL(\sigma,+)\ot\fL(\tau,0)&\simeq\fL(\sigma,+)\ot\fL(e,\rho)\simeq\\
&\simeq t^{-1}\bigl(\fL(\tau,1)\oplus\fL(\tau,2)\bigr)\oplus(1+t^{-2})\fL(\sigma,+)\oplus t^{-2}\fP(\sigma,-)
\end{align*} 
\begin{align*}
\fL(\sigma,+)&\ot\fL(\sigma,-)\simeq\\
\simeq&(1+t^{-2})\bigl(\fL(e,-)\oplus\fL(\tau,1)\oplus\fL(\tau,2)\bigr)\oplus2t^{-1}\fL(\sigma,+)\oplus t^{-2}\bigl(\fP(e,\rho)\oplus\fP(\tau,0)\bigr)
\end{align*}
\qed
\end{prop}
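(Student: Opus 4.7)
The plan is to reduce everything to a graded character computation. First I would invoke the standard fact that, in the module category of a finite-dimensional Hopf algebra, the tensor product of a projective module with any module is again projective (equivalently, projectives form a tensor ideal). Since $\lambda\in\Lsp$ means $\fL(\lambda)$ is projective, each product $\fL(\lambda)\ot\fL(\mu)$ in the statement is a graded projective $\D$-module.

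Once projectivity is established, \eqref{eq:proj D mod} tells us that a graded projective module is determined, up to isomorphism, by its graded character as a $\Z[t,t^{-1}]$-combination of $\chgr\fP(\nu)$. So the whole task reduces to computing $\chgr\bigl(\fL(\lambda)\ot\fL(\mu)\bigr)$ and expanding it in the basis $\{\chgr\fP(\nu)\mid\nu\in\Lambda\}$ of $R^{\bullet}_{proj}$.

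The character computation itself is a routine application of Lemma \ref{le:chgr es morf de anillos}: since $\chgr$ is a morphism of $\Z[t,t^{-1}]$-algebras, I have
\[
\chgr\bigl(\fL(\lambda)\ot\fL(\mu)\bigr)=\chgr\fL(\lambda)\cdot\chgr\fL(\mu),
\]
where both factors are known explicitly (the formulas for $\chgr\fL(\nu)$ are recorded in Section \ref{sec:Preliminaries} and, for $\lambda\in\Lsp$, reduce to $\lambda\cdot\chgr\BV(V)$). The products of the weights that appear are given by the fusion rules of $\DSn$ from \cite[\S 5.2.4]{PV2}. Thus each case becomes a polynomial identity in $K[t,t^{-1}]$ to be checked.

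The main obstacle is purely bookkeeping: matching the resulting polynomial expression in the weight basis to the target decomposition, and in particular ensuring that the summands that are \emph{not} themselves simple (such as $\fP(e,\rho)$, $\fP(\tau,0)$, $\fP(\sigma,-)$) are correctly recognized. For this I would proceed case by case: first peel off the contributions of the simple projectives $\fP(\lambda)=\fL(\lambda)$ for $\lambda\in\Lsp$ (whose characters are monomials times $\chgr\BV(V)\cdot\lambda$), and then subtract to see whether what remains is a non-negative $\Z[t,t^{-1}]$-combination of $\chgr\fP(e,\rho)$, $\chgr\fP(\tau,0)$, $\chgr\fP(\sigma,-)$. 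Non-negativity of the resulting coefficients is automatic from projectivity together with the uniqueness in \eqref{eq:proj D mod}, but it also serves as a sanity check for each computation. No new conceptual ingredient is needed beyond these inputs.
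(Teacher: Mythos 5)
Your proposal is correct and follows exactly the paper's reasoning: observe that tensoring with the projective $\fL(\lambda)$ yields a projective module, then identify it by expanding its graded character (computed via Lemma \ref{le:chgr es morf de anillos} and the $\DSn$ fusion rules) in the basis $\{\chgr\fP(\nu)\}$ using the uniqueness guaranteed by \eqref{eq:proj D mod}. Nothing more is needed.
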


\section*{Appendix}

We give here the action of the generators of $\D$ on the simple modules $\fL(\lambda)$ for $\lambda\notin\Lsp$. We have computed them identifying $\fL(\lambda)$ with the socle of a Verma module. Then, we use \cite[Appendix A]{PV2} to calculate the action of $\yij{12}$ and the action of $\xij{12}$ is just the multiplication in $\BV(V)$. The actions of the remainder $\yij{ij}$ and $\xij{ij}$ were deduced from the above using that the action is a morphism of $\DSn$-modules. For instance, $\yij{23} c_2=(13)(\yij{12}c_1)$.

\subsection*{The structure of the weights}

The simple Yetter-Drinfeld modules over finite group $G$ were classified for instance in \cite{MR1714540}. The category of Yetter-Drinfeld module is equivalent to the category of modules over the Drinfeld double $G$. Therefore the simple $\D(G)$-module are classified and constructed as follows. Let $\cO_g$ be the conjugacy class of $g\in G$ and $(U,\varrho)$ an irreducible representation of the centralizer $C_g$ of $g$. The corresponding simple $\D(G)$-module is the induced $G$-module $M(g,\varrho)=\ku G\ot_{\ku C_g}U$ with $\ku^G$-action given by $f\cdot(x\ot u)=f(xgx^{-1})x\ot u$ for all function $f\in\ku^G$, $x\in G$ and $u\in U$. Notice that the $\ku^G$-action is equivalent to give a $G$-grading.

In the case of $G=\Sn_3$, we explicitly describe the weights keeping the notation of \cite[\S 2.5.2]{PV2}. Recall also Table \ref{tab:weight}.

\subsubsection*{The weights $(\sigma,\pm)$} The symbols $\mm{12}_{\pm}$, $\mm{23}_{\pm}$ and $\mm{13}_{\pm}$ form a basis. The $\Sn_3$-degree of $\mm{ij}_{\pm}$ is $(ij)$. The $\Sn_3$-action is $g\cdot\mm{ij}_{+}=\mm{g(i)g(j)}_{+}$ and $g\cdot\mm{ij}_{-}=\sgn(g)\mm{g(i)g(j)}_{-}$, respectively; where we identify $\mm{ij}_{\pm}=\mm{ji}_{\pm}$.

\subsubsection*{The weights $(\tau,\ell)$, $\ell=0,1,2$} The symbols $\mm{123}_{\ell}$ and $\mm{132}_{\ell}$ form a basis. The $\Sn_3$-degree of $\mm{ijk}_{\ell}$ is $(ijk)$. Given $g\in G$, we can write $g=(12)^s(123)^t$. Thus, the $\Sn_3$-action is $g\cdot\mm{123}_{\ell}=\zeta^{t\ell}\mm{g(1)g(2)g(3)}_\ell$ and $g\cdot\mm{132}_{\ell}=\zeta^{-t\ell}\mm{g(1)g(3)g(2)}_\ell$; where we identify $\mm{ijk}_\ell=\mm{jki}_\ell=\mm{kij}_\ell$.

\subsubsection*{The weights $\e=(e,+)$ and $(e,-)$} The symbol $\mm{e}_{\pm}$ forms a basis of $\Sn_3$-degree $e$. The $\Sn_3$-action is given by the counit $\e$ and the sign representation of $\Sn_3$, respectively.

\subsubsection*{The weight $(e,\rho)$} The symbols $\mm{123}_{\rho}$ and $\mm{132}_{\rho}$ form a basis. The $\Sn_3$-degree of $\mm{ijk}_{\rho}$ is $e$. As $\Sn_3$-module, it is isomorphic to $(\tau,1)$ via the assignment $\mm{ijk}_\rho\mapsto\mm{ijk}_1$.

\subsection*{Bases for the simple modules}

The isomorphisms listed below are of graded $\DSn$-modules. These are obtained by identifying the elements of the respective ordered bases. 

\

$\bullet$ $\fL(\tau,0)$ has a homogeneous basis $\{a_i\mid1\leq i\leq7\}$ such that
\begin{align*}
\ku\langle a_1,a_2\rangle&\simeq\ku\{\mm{123}_{\rho}, \mm{132}_{\rho}\}\simeq(e,\rho),\quad\deg a_1=\deg a_2=-2,\\
\ku\langle a_3,a_4,a_5\rangle&\simeq\ku\{\mm{12}_+,\mm{13}_+,\mm{23}_+\}\simeq(\sigma,+),\quad\deg a_3=\deg a_4=\deg a_5=-1,\\
\ku\langle a_6,a_7\rangle&\simeq\ku\{\mm{123}_0, \mm{132}_0\}\simeq(\tau,0),\quad\deg a_6=\deg a_7=0,
\end{align*}
The first weight corresponds to $\fC$ in \cite[\S 4.5]{PV2} and the last one to $\BV^{n_{top}}(V)\ot(e,\rho)$.

\

$\bullet$ $\fL(e,\rho)$ has a homogeneous basis $\{b_i\mid1\leq i\leq7\}$ such that
\begin{align*}
\ku\langle b_1,b_2\rangle&\simeq\ku\{\mm{123}_0, \mm{132}_0\}\simeq(\tau,0),\quad\deg b_1=\deg b_2=-2,\\
\ku\langle b_3,b_4,b_5\rangle&\simeq\ku\{\mm{23}_+,\mm{12}_+,\mm{13}_+\}\simeq(\sigma,+),\quad\deg b_3=\deg b_4=\deg b_5=-1,\\
\ku\langle b_6,b_7\rangle&\simeq\ku\{\mm{132}_{\rho},\mm{123}_{\rho}\}\simeq(e,\rho),\quad\deg b_6=\deg b_7=0,
\end{align*}
The first weight corresponds to $\fG$ in \cite[\S 4.6]{PV2} and the last one to $\BV^{n_{top}}(V)\ot(\tau,0)$.

\

$\bullet$ $\fL(\sigma,-)$ has a homogeneous basis $\{c_i\mid1\leq i\leq10\}$ such that
\begin{align*}
\ku\langle c_1,c_2,c_3\rangle&\simeq\ku\{\mm{12}_-,\mm{23}_-,\mm{13}_-\}\simeq(\sigma,-),\quad\deg c_1=\deg c_2=\deg c_3=-2,\\
\ku\langle c_4,c_5\rangle&\simeq\ku\{\mm{123}_1, \mm{132}_1\}\simeq(\tau,1),\quad\deg c_4=\deg c_5=-1,\\
\ku\langle c_6,c_7\rangle&\simeq\ku\{\mm{123}_2, \mm{132}_2\}\simeq(\tau,2),\quad\deg c_6=\deg c_7=-1,\\
\ku\langle c_8,c_9,c_{10}\rangle&\simeq\ku\{\mm{12}_-,\mm{23}_-,\mm{13}_-\}\simeq(\sigma,-),\quad\deg c_8=\deg c_9=\deg c_{10}=0,
\end{align*}
The listed weights correspond to $\BV^{n_{top}}(V)\ot(\sigma,-)$, $\fN_1$, $\fN_2$ and $\fR$ of \cite[\S 4.3]{PV2}, respectively.

\

$\bullet$ $\fL(\varepsilon)=\ku\langle d_1\rangle$ is one-dimensional of degree $0$.

\subsection*{Action on the bases}

We explicitly describe the action of the elements $(ij)$, $\xij{ij}$ and $\yij{ij}$ over the bases above.

\

\noindent
\begin{minipage}[l]{4 cm}
$(12)a_1=a_2$

$(12)a_2=a_1$

$(12)a_3=a_3$

$(12)a_4=a_5$

$(12)a_5=a_4$

$(12)a_6=a_7$

$(12)a_7=a_6$
\end{minipage}
\noindent
\begin{minipage}[c]{4 cm}
$(13)a_1=\zeta^2 a_2$

$(13)a_2=\zeta a_1$

$(13)a_3=a_4$

$(13)a_4=a_3$

$(13)a_5=a_5$

$(13)a_6=a_7$

$(13)a_7=a_6$
\end{minipage}
\noindent
\begin{minipage}[r]{4 cm}
$(23)a_1=\zeta a_2$

$(23)a_2=\zeta^2 a_1$

$(23)a_3=a_3$

$(23)a_4=a_5$

$(23)a_5=a_4$

$(23)a_6=a_7$

$(23)a_7=a_6$
\end{minipage}

\vspace{.3cm}

\noindent
\begin{minipage}[l]{4 cm}
$x_{(12)}a_1=0$

$x_{(12)}a_2=0$

$x_{(12)}a_3=a_1-a_2$

$x_{(12)}a_4=0$

$x_{(12)}a_5=0$

$x_{(12)}a_6=a_5$

$x_{(12)}a_7=-a_4$
\end{minipage}
\noindent
\begin{minipage}[c]{4 cm}
$x_{(13)}a_1=0$

$x_{(13)}a_2=0$

$x_{(13)}a_3=\zeta^2 a_1-\zeta a_2$

$x_{(13)}a_4=0$

$x_{(13)}a_5=0$

$x_{(13)}a_6=a_5$

$x_{(13)}a_7=-a_4$
\end{minipage}
\noindent
\begin{minipage}[r]{4 cm}
$x_{(23)}a_1=0$

$x_{(23)}a_2=0$

$x_{(23)}a_3=0$

$x_{(23)}a_4=\zeta a_1-\zeta^2 a_2$

$x_{(23)}a_5=0$

$x_{(23)}a_6=a_3$

$x_{(23)}a_7=-a_5$
\end{minipage}

\vspace{.3cm}

\noindent
\begin{minipage}[l]{4 cm}
$y_{(12)}a_1=a_3$

$y_{(12)}a_2=-a_3$

$y_{(12)}a_3=0$

$y_{(12)}a_4=-a_7$

$y_{(12)}a_5=a_6$

$y_{(12)}a_6=0$

$y_{(12)}a_7=0$
\end{minipage}
\noindent
\begin{minipage}[c]{4 cm}
$y_{(13)}a_1=\zeta a_3$

$y_{(13)}a_2=-\zeta^2 a_3$

$y_{(13)}a_3=0$

$y_{(13)}a_4=-a_7$

$y_{(13)}a_5=a_6$

$y_{(13)}a_6=0$

$y_{(13)}a_7=0$
\end{minipage}
\noindent
\begin{minipage}[r]{4 cm}
$y_{(23)}a_1=\zeta^2 a_4$

$y_{(23)}a_2=-\zeta a_4$

$y_{(23)}a_3=a_6$

$y_{(23)}a_4=0$

$y_{(23)}a_5=-a_7$

$y_{(23)}a_6=0$

$y_{(23)}a_7=0$
\end{minipage}

\vspace{.5cm}

\noindent
\begin{minipage}[l]{4 cm}
$(12)b_1=b_2$

$(12)b_2=b_1$

$(12)b_3=b_5$

$(12)b_4=b_4$

$(12)b_5=b_3$

$(12)b_6=b_7$

$(12)b_7=b_6$
\end{minipage}
\noindent
\begin{minipage}[c]{4 cm}
$(13)b_1=b_2$

$(13)b_2=b_1$

$(13)b_3=b_4$

$(13)b_4=b_3$

$(13)b_5=b_5$

$(13)b_6=\zeta b_7$

$(13)b_7=\zeta^2 b_6$
\end{minipage}
\noindent
\begin{minipage}[r]{4 cm}
$(23)b_1=b_2$

$(23)b_2=b_1$

$(23)b_3=b_3$

$(23)b_4=b_5$

$(23)b_5=b_4$

$(23)b_6=\zeta^2 b_7$

$(23)b_7=\zeta b_6$
\end{minipage}

\vspace{.3cm}

\noindent
\begin{minipage}[l]{4 cm}
$x_{(12)}b_1=0$

$x_{(12)}b_2=0$

$x_{(12)}b_3=b_1$

$x_{(12)}b_4=0$

$x_{(12)}b_5=-b_2$

$x_{(12)}b_6=b_4$

$x_{(12)}b_7=-b_4$
\end{minipage}
\noindent
\begin{minipage}[c]{4 cm}
$x_{(13)}b_1=0$

$x_{(13)}b_2=0$

$x_{(13)}b_3=-b_2$

$x_{(13)}b_4=b_1$

$x_{(13)}b_5=0$

$x_{(13)}b_6=\zeta^2 b_5$

$x_{(13)}b_7=-\zeta b_5$
\end{minipage}
\noindent
\begin{minipage}[r]{4 cm}
$x_{(23)}b_1=0$

$x_{(23)}b_2=0$

$x_{(23)}b_3=0$

$x_{(23)}b_4=-b_2$

$x_{(23)}b_5=b_1$

$x_{(23)}b_6=\zeta b_3$

$x_{(23)}b_7=-\zeta^2 b_3$
\end{minipage}

\vspace{.3cm}

\noindent
\begin{minipage}[l]{4 cm}
$y_{(12)}b_1=b_3$

$y_{(12)}b_2=-b_5$

$y_{(12)}b_3=0$

$y_{(12)}b_4=b_6-b_7$

$y_{(12)}b_5=0$

$y_{(12)}b_6=0$

$y_{(12)}b_7=0$
\end{minipage}
\noindent
\begin{minipage}[c]{4 cm}
$y_{(13)}b_1=b_4$

$y_{(13)}b_2=-b_3$

$y_{(13)}b_3=0$

$y_{(13)}b_4=0$

$y_{(13)}b_5=\zeta b_6 - \zeta^2 b_7$

$y_{(13)}b_6=0$

$y_{(13)}b_7=0$
\end{minipage}
\noindent
\begin{minipage}[r]{4 cm}
$y_{(23)}b_1=b_5$

$y_{(23)}b_2=-b_4$

$y_{(23)}b_3=\zeta^2 b_6-\zeta b_7$

$y_{(23)}b_4=0$

$y_{(23)}b_5=0$

$y_{(23)}b_6=0$

$y_{(23)}b_7=0$
\end{minipage}

\vspace{.5cm}

\noindent
\begin{minipage}[l]{4 cm}
$(12)c_1=-c_1$

$(12)c_2=-c_3$

$(12)c_3=-c_2$

$(12)c_4=c_5$

$(12)c_5=c_4$

$(12)c_6=c_7$

$(12)c_7=c_6$

$(12)c_8=-c_8$

$(12)c_9=-c_{10}$

$(12)c_{10}=-c_9$
\end{minipage}
\noindent
\begin{minipage}[c]{4 cm}
$(13)c_1=-c_2$

$(13)c_2=-c_1$

$(13)c_3=-c_3$

$(13)c_4=\zeta^2 c_5$

$(13)c_5=\zeta c_4$

$(13)c_6=\zeta c_7$

$(13)c_7=\zeta^2 c_6$

$(13)c_8=-c_9$

$(13)c_9=-c_8$

$(13)c_{10}=-c_{10}$
\end{minipage}
\noindent
\begin{minipage}[r]{4 cm}
$(23)c_1=-c_3$

$(23)c_2=-c_2$

$(23)c_3=-c_1$

$(23)c_4=\zeta c_5$

$(23)c_5=\zeta^2 c_4$

$(23)c_6=\zeta^2 c_7$

$(23)c_7=\zeta c_6$

$(23)c_8=-c_{10}$

$(23)c_9=-c_9$

$(23)c_{10}=-c_8$
\end{minipage}

\vspace{.3cm}

\noindent
\begin{minipage}[l]{4 cm}
$x_{(12)}c_1=0$

$x_{(12)}c_2=0$

$x_{(12)}c_3=0$

$x_{(12)}c_4=\zeta c_2$

$x_{(12)}c_5=\zeta c_3$

$x_{(12)}c_6=\zeta^2 c_2$

$x_{(12)}c_7=\zeta^2 c_3$

$x_{(12)}c_8=0$

$x_{(12)}c_9=\frac{1}{1-\zeta}(c_6-\zeta c_4)$

$x_{(12)}c_{10}=\frac{1}{1-\zeta}(c_7-\zeta c_5)$
\end{minipage}
\noindent
\begin{minipage}[c]{4 cm}
$x_{(13)}c_1=0$

$x_{(13)}c_2=0$

$x_{(13)}c_3=0$

$x_{(13)}c_4=\zeta^2 c_1$

$x_{(13)}c_5=c_2$

$x_{(13)}c_6=\zeta c_1$

$x_{(13)}c_7=c_2$

$x_{(13)}c_8=\frac{1}{1-\zeta}(\zeta c_6-c_4)$

$x_{(13)}c_9=\frac{\zeta^2}{1-\zeta}(c_7- c_5)$

$x_{(13)}c_{10}=0$
\end{minipage}
\noindent
\begin{minipage}[r]{4 cm}
$x_{(23)}c_1=0$

$x_{(23)}c_2=0$

$x_{(23)}c_3=0$

$x_{(23)}c_4=c_3$

$x_{(23)}c_5=\zeta^2 c_1$

$x_{(23)}c_6=c_3$

$x_{(23)}c_7=\zeta c_1$

$x_{(23)}c_8=\frac{1}{1-\zeta}(\zeta c_7-c_5)$

$x_{(23)}c_9=0$

$x_{(23)}c_{10}=\frac{\zeta^2}{1-\zeta}(c_6- c_4)$
\end{minipage}

\vspace{.3cm}

\noindent
\begin{minipage}[l]{4 cm}
$y_{(12)}c_1=0$

$y_{(12)}c_2=\frac{\zeta^2}{1-\zeta}(c_4-\zeta c_6)$

$y_{(12)}c_3=\frac{\zeta^2}{1-\zeta}(c_5-\zeta c_7)$

$y_{(12)}c_4=c_9$

$y_{(12)}c_5=c_{10}$

$y_{(12)}c_6=c_9$

$y_{(12)}c_7=c_{10}$

$y_{(12)}c_8=0$

$y_{(12)}c_9=0$

$y_{(12)}c_{10}=0$
\end{minipage}
\noindent
\begin{minipage}[c]{4 cm}
$y_{(13)}c_1=\frac{1}{1-\zeta}(\zeta c_4-c_6)$

$y_{(13)}c_2=\frac{1}{1-\zeta}(c_5-\zeta c_7)$

$y_{(13)}c_3=0$

$y_{(13)}c_4=\zeta c_8$

$y_{(13)}c_5=\zeta^2 c_9$

$y_{(13)}c_6=\zeta^2 c_8$

$y_{(13)}c_7=\zeta c_9$

$y_{(13)}c_8=0$

$y_{(13)}c_9=0$

$y_{(13)}c_{10}=0$
\end{minipage}
\noindent
\begin{minipage}[r]{4 cm}
$y_{(23)}c_1=\frac{1}{1-\zeta}(\zeta c_5-c_7)$

$y_{(23)}c_2=0$

$y_{(23)}c_3=\frac{1}{1-\zeta}(c_4-\zeta c_6)$

$y_{(23)}c_4=\zeta^2 c_{10}$

$y_{(23)}c_5=\zeta c_8$

$y_{(23)}c_6=\zeta c_{10}$

$y_{(23)}c_7=\zeta^2 c_8$

$y_{(23)}c_8=0$

$y_{(23)}c_9=0$

$y_{(23)}c_{10}=0$
\end{minipage}

\bibliography{refs}{}

\def\cprime{$'$}
\begin{thebibliography}{10}

\bibitem{arXiv:1802.00316}
N.~Andruskiewitsch and I.~Angiono.
\newblock {O}n {N}ichols algebras over basic {H}opf algebras.
\newblock {\em arXiv:1802.00316}, 2016.

\bibitem{MR1714540}
N.~Andruskiewitsch and M.~Gra{\~n}a.
\newblock Braided {H}opf algebras over non-abelian finite groups.
\newblock {\em Bol. Acad. Nac. Cienc. (C{\'o}rdoba)}, 63:45--78, 1999.
\newblock Colloquium on Operator Algebras and Quantum Groups (Spanish)
  (Vaquer{\'{\i}}as, 1997).

\bibitem{dim72}
N.~Andruskiewitsch and C.~Vay.
\newblock On a family of hopf algebras of dimension 72.
\newblock {\em Bull. Belg. Math. Soc. Simon Stevin}, 19(3):415--443, 2012.

\bibitem{spherical1}
J.~W. Barrett and B.~W. Westbury.
\newblock Spherical categories.
\newblock {\em Adv. Math.}, 143:357--375, 1999.

\bibitem{arXiv:1705.08024}
G.~Bellamy and U.~Thiel.
\newblock Highest weight theory for finite-dimensional graded algebras with
  triangular decomposition.
\newblock {\em Adv. Math.}, 330:361 -- 419, 2018.

\bibitem{MR3260906}
H.-X. Chen.
\newblock The {G}reen ring of {D}rinfeld double {$D(H_4)$}.
\newblock {\em Algebr. Represent. Theory}, 17(5):1457--1483, 2014.

\bibitem{MR3655701}
H.-X. Chen, H.~S.~E. Mohammed, and H.~Sun.
\newblock Indecomposable decomposition of tensor products of modules over
  {D}rinfeld doubles of {T}aft algebras.
\newblock {\em J. Pure Appl. Algebra}, 221(11):2752--2790, 2017.

\bibitem{MR3148512}
H.-X. Chen, F.~Van~Oystaeyen, and Y.~Zhang.
\newblock The {G}reen rings of {T}aft algebras.
\newblock {\em Proc. Amer. Math. Soc.}, 142(3):765--775, 2014.

\bibitem{MR1243707}
C.~Cibils.
\newblock A quiver quantum group.
\newblock {\em Comm. Math. Phys.}, 157(3):459--477, 1993.

\bibitem{MR2184820}
K.~Erdmann, E.~L. Green, N.~Snashall, and R.~Taillefer.
\newblock Representation theory of the {D}rinfeld doubles of a family of {H}opf
  algebras.
\newblock {\em J. Pure Appl. Algebra}, 204(2):413--454, 2006.

\bibitem{MR1667680}
S.~Fomin and A.~N. Kirillov.
\newblock Quadratic algebras, {D}unkl elements, and {S}chubert calculus.
\newblock In {\em Advances in geometry}, volume 172 of {\em Progr. Math.},
  pages 147--182. Birkh\"auser Boston, Boston, MA, 1999.

\bibitem{GaiSemTip06}
A.~M. Gainutdinov, A.~M. Semikhatov, I.~Yu. Tipunin, and B.~L. Feigin.
\newblock {K}azhdan--{L}usztig correspondence for the~representation category
  of the~triplet $w$-algebra in logarithmic {CFT}.
\newblock {\em TMF}, 148(3):398--427, 2006.

\bibitem{MR2681259}
A.~Garc\'ia~Iglesias.
\newblock Representations of finite dimensional pointed {H}opf algebras over
  {$\mathbb{S}_3$}.
\newblock {\em Rev. Un. Mat. Argentina}, 51(1):51--77, 2010.

\bibitem{MR659212}
R.~Gordon and E.~L. Green.
\newblock Graded {A}rtin algebras.
\newblock {\em J. Algebra}, 76(1):111--137, 1982.

\bibitem{MR1987337}
E.~Gunnlaugsd\'ottir.
\newblock Monoidal structure of the category of {$u^+_q$}-modules.
\newblock {\em Linear Algebra Appl.}, 365:183--199, 2003.
\newblock Special issue on linear algebra methods in representation theory.

\bibitem{MR2774620}
H.~Kondo and Y.~Saito.
\newblock Indecomposable decomposition of tensor products of modules over the
  restricted quantum universal enveloping algebra associated to
  {${\mathfrak{sl}}_2$}.
\newblock {\em J. Algebra}, 330:103--129, 2011.

\bibitem{MR2279242}
L.~Krop and D.~E. Radford.
\newblock Simple modules for the {D}rinfel\cprime d double of a class of {H}opf
  algebras.
\newblock In {\em Groups, rings and algebras}, volume 420 of {\em Contemp.
  Math.}, pages 229--235. Amer. Math. Soc., Providence, RI, 2006.

\bibitem{LENTNER2017264}
S.~Lentner and J.~Priel.
\newblock {A} decomposition of the {B}rauer-{P}icard group of the
  representation category of a finite group.
\newblock {\em Journal of Algebra}, 489:264 -- 309, 2017.

\bibitem{MR3077243}
L.~Li and Y.~Zhang.
\newblock The {G}reen rings of the generalized {T}aft {H}opf algebras.
\newblock In {\em Hopf algebras and tensor categories}, volume 585 of {\em
  Contemp. Math.}, pages 275--288. Amer. Math. Soc., Providence, RI, 2013.

\bibitem{MR1435369}
M.~Lorenz.
\newblock Representations of finite-dimensional {H}opf algebras.
\newblock {\em J. Algebra}, 188(2):476--505, 1997.

\bibitem{MR1800714}
A.~Milinski and H.-J. Schneider.
\newblock Pointed indecomposable {H}opf algebras over {C}oxeter groups.
\newblock In {\em New trends in {H}opf algebra theory ({L}a {F}alda, 1999)},
  volume 267 of {\em Contemp. Math.}, pages 215--236. Amer. Math. Soc.,
  Providence, RI, 2000.

\bibitem{NIKSHYCH2014191}
D.~Nikshych and B.~Riepel.
\newblock {C}ategorical {L}agrangian {G}rassmannians and {B}rauer-{P}icard
  groups of pointed fusion categories.
\newblock {\em Journal of Algebra}, 411:191 -- 214, 2014.

\bibitem{MR2046303}
C.~N\u{a}st\u{a}sescu and F.~Van~Oystaeyen.
\newblock {\em Methods of graded rings}, volume 1836 of {\em Lecture Notes in
  Mathematics}.
\newblock Springer-Verlag, Berlin, 2004.

\bibitem{PV2}
B.~Pogorelsky and C.~Vay.
\newblock Verma and simple modules for quantum groups at non-abelian groups.
\newblock {\em Adv. Math.}, 301:423--457, 2016.

\bibitem{MR1284788}
R.~Suter.
\newblock Modules over {$\mathfrak{U}_q(\mathfrak{sl}_2)$}.
\newblock {\em Comm. Math. Phys.}, 163(2):359--393, 1994.

\bibitem{vay-proj}
C.~{Vay}.
\newblock On projective modules over finite quantum groups.
\newblock {\em Transf. Groups}, 2017.

\bibitem{vay-mca17}
C.~{Vay}.
\newblock On {H}opf algebras with triangular decomposition.
\newblock {\em arXiv:1808.03799}, 2018.

\bibitem{MR1969453}
M.~Wakui.
\newblock On representation rings of non-semisimple {H}opf algebras of low
  dimension.
\newblock In {\em Proceedings of the 35th {S}ymposium on {R}ing {T}heory and
  {R}epresentation {T}heory ({O}kayama, 2002)}, pages 9--14. Symp. Ring Theory
  Represent. Theory Organ. Comm., Okayama, 2003.

\bibitem{MR1367852}
S.~J. Witherspoon.
\newblock The representation ring of the quantum double of a finite group.
\newblock {\em J. Algebra}, 179(1):305--329, 1996.

\bibitem{MR2441478}
Y.~Zhang, F.~Wu, L.~Liu, and H.-X. Chen.
\newblock Grothendieck groups of a class of quantum doubles.
\newblock {\em Algebra Colloq.}, 15(3):431--448, 2008.

\end{thebibliography}
\bibliographystyle{plain}
\end{document}